\documentclass[10pt, a4paper]{amsart}
\usepackage[utf8]{inputenc}
\usepackage[T1]{fontenc}
\usepackage{tikz}
\usetikzlibrary{arrows.meta,patterns,positioning}
\usepackage{paralist}
\usetikzlibrary{arrows.meta}
\usetikzlibrary{calc}
\usepackage{amsmath}
\usepackage{amssymb}
\usepackage{amsthm}
\usepackage{graphicx}
\usepackage{url}
\usepackage{enumerate}
\usepackage{xcolor}
\usepackage{mathrsfs} 
\usepackage{mathtools}
\usepackage{t1enc}
\usepackage{tikz}
\usepackage{amsthm}
\usepackage{thmtools}
\usepackage{thm-restate}
\newtheorem{thm}{Theorem}

\newtheorem{lem}[thm]{Lemma}

\newtheorem{cor}[thm]{Corollary}

\theoremstyle{definition}
\newtheorem{defn}[thm]{Definition}
\newtheorem{rem}[thm]{Remark}

\DeclareMathOperator{\Pe}{\mathcal{P}} 
\DeclareMathOperator{\supp}{supp} 

\newcommand{\dbar}{\bar{d}}
\DeclareMathOperator{\dist}{dist}

\newcommand{\di}{D_P}

\DeclareMathOperator{\Qe}{\mathcal{Q}}

\DeclareMathOperator{\sint}{\overline{\kern-5pt\int}}
\DeclareMathOperator{\m}{\mathfrak m}

\newcommand{\Folner}{F{\o}lner }
\renewcommand{\phi}{\varphi}

\DeclareMathOperator{\M}{\mathcal{M}}

\DeclareMathOperator{\diam}{\text{diam}}

\DeclareMathOperator{\F}{\mathcal{F}}
\newcommand{\HD}{H}

\newcommand{\alf}{\mathscr{A}}

\def\ocirc#1{\ifmmode\setbox0=\hbox{$#1$}\dimen0=\ht0
    \advance\dimen0 by1pt\rlap{\hbox to\wd0{\hss\raise\dimen0
    \hbox{\hskip.2em$\scriptscriptstyle\circ$}\hss}}#1\else
    {\accent"17 #1}\fi}
\title[Entropy Density of Uniquely Ergodic Measures]{Entropy Density of Uniquely Ergodic Measures for Full Shifts over Amenable Residually Finite Groups}
\newcommand{\eps}{\varepsilon}
\newcommand{\R}{\mathbb{R}}
\newcommand{\Z}{\mathbb{Z}}
\newcommand{\N}{\mathbb{N}}

\author{Martha \L{}\k{a}cka}

\address[M. \L{}\k{a}cka]{
	Faculty of Mathematics and Computer Science, Jagiellonian University in Krak\'ow, ul. \L o\-jasiewicza 6, 30-348 Krak\'ow, Poland
}
\email{martha.ubik@uj.edu.pl}
\urladdr{www2.im.uj.edu.pl/MarthaLacka/}

\author{Marcel Mroczek}

\address[M. Mroczek]{
	Faculty of Mathematics and Computer Science, Jagiellonian University in Krak\'ow, ul. \L o\-jasiewicza 6, 30-348 Krak\'ow, Poland
}
\email{mroczek.marcel@gmail.com}

\begin{document}

\begin{abstract}
		We study entropy density for full shifts over amenable residually finite groups.  
			Entropy density means that every invariant measure can be approximated in the weak$^*$ topology, together with its entropy, by measures from a distinguished family.  
				In symbolic dynamics it is known, by a result of Weiss, that uniquely ergodic measures are entropy dense among ergodic measures for the shift action of $\mathbb Z$.  
						We extend this result to full shifts over amenable residually finite groups:
invariant measures supported on uniquely ergodic subsystems are entropy dense
in the collection of ergodic invariant measures.
							This applies in particular to full shifts over finitely generated abelian groups.
							The proof uses Cortez--Petite F{\o}lner tilings and a block-replacement
construction adapted to finite-index subgroups.

                        \end{abstract}

                         \subjclass[2020]{Primary 37B10; Secondary 37A35, 37A15, 43A07}

                         	\keywords{$d$-bar pseudometric, entropy, ergodicity, entropy density, amenable groups, residually finite groups}

	\maketitle

\section*{Introduction}

Let $G$ be a countable amenable group and let $\alf$ be a finite alphabet.  
The full shift $\alf^G$ is one of the basic examples of a symbolic action of
$G$. Its invariant measures form a Choquet simplex, and a classical problem is
to understand which dynamically distinguished classes of measures are dense in
this simplex, both in the weak$^*$ topology and with respect to entropy.

In this paper we consider this question for measures supported on uniquely
ergodic subsystems. A family $\mathcal A\subset \M_G(\alf^G)$ is entropy dense
in a family $\mathcal B\subset \M_G(\alf^G)$ if every measure in
$\mathcal B$ can be approximated in the weak$^*$ topology by measures from
$\mathcal A$, with convergence of entropies at the same time. Thus entropy
density is stronger than weak$^*$ density: it requires the approximating measures
to preserve the entropy of the original measure in the limit.

For the shift action of $\mathbb Z$, Weiss proved that invariant measures
supported on uniquely ergodic subsystems are entropy dense in the set of ergodic
invariant measures~\cite{Weiss}. Our goal is to extend this result from
$\mathbb Z$-actions to full shifts over amenable residually finite groups.

A new difficulty in this setting is that an ergodic $G$-invariant measure need
not remain ergodic after restricting the action to a finite-index subgroup.
Moreover, the block-replacement maps used in the proof are not $G$-equivariant:
at a given level they are equivariant only with respect to the corresponding
finite-index subgroup. Thus one has to keep track of the associated finite
ergodic decompositions. We do this by introducing universally good generic
points and proving that their generic behavior is stable under the jigsaw
operations used in the construction.

Residual finiteness enters through the finite-index normal subgroups mentioned
above. We use a Cortez--Petite F{\o}lner sequence, whose fundamental domains
decompose into principal copies at smaller scales. This structure allows us to
perform block-replacement operations, called jigsaws, on principal copies while
keeping control of empirical distributions along the F{\o}lner sequence.

The main technical result says that every universally good generic point can be
approximated in the Besicovitch pseudometric by points whose orbit closures are
uniquely ergodic. Combining this approximation with continuity of entropy in the
$\bar d$-metric gives entropy density of invariant measures supported on
uniquely ergodic subsystems among ergodic invariant measures of the full shift.
In particular, the result applies to full shifts over finitely generated
abelian groups.
\section{Preliminaries}

\subsection{Basic notation}
For sets $A,B$ write $A\triangle B$ for the \emph{symmetric difference} and $|A|$ (or $\#A$) for \emph{cardinality}.  
Let $A^B$ be the set of all sequences indexed by $B$ with values in $A$, and
\[
A^{\infty}:=\{(a_i)_{i\in\N}: a_i\in A \text{ for all } i\in\N\}.
\]
A map $d\colon A\times A\to\R_+$ is a \emph{pseudometric} if it is symmetric and satisfies the triangle inequality.  
Pseudometrics $d_1,d_2$ on $A$ are \emph{uniformly equivalent} if for every $\eps>0$ there is $\delta>0$ with $d_i(x,y)<\delta\Rightarrow d_j(x,y)<\eps$ for $(i,j)\in\{(1,2),(2,1)\}$.  
Write $\mathcal C(X)$ for the space of continuous maps $X\to\R$.

\subsection{Measure-preserving systems}
Let $(X,\rho)$ be compact metric, $\diam(X)=1$, and $G$ countably infinite acting on $X$ by homeomorphisms.  
The pair $(X,G)$ is a \emph{topological dynamical system}.  
A \emph{measure-preserving system} is $(X,G,\mu)$ with $\mu$ a $G$-invariant Borel probability measure ($\mu(gA)=\mu(A)$ for all Borel $A\subset X$ and $g\in G$).

For a homeomorphism $g\colon X\to X$, $(X,g)$ denotes the $\Z$-system generated by $g$; if $g$ preserves $\mu$, write $(X,g,\mu)$.  
The \emph{product} of $\mathbf X=(X,G,\mu)$ and $\mathbf Y=(Y,G,\nu)$ is $\mathbf{X\times Y}=(X\times Y,G,\mu\times\nu)$ with $g(x,y)=(gx,gy)$.  
A system $\mathbf Y=(Y,G,\nu)$ is a \emph{factor} of $\mathbf X=(X,G,\mu)$ if there exists a full-measure, $G$-invariant $X'\subset X$ and a measurable, measure-preserving, $G$-equivariant map $X'\to Y$.

\subsection{Ergodicity}
A system $(X,G,\mu)$ is \emph{ergodic} if every measurable $G$-invariant function is $\mu$-a.e.\ constant.  
If the action is fixed, we also call such a measure $\mu$ \emph{ergodic}.  
See \cite[p.~233]{Ward} for equivalent characterizations.

\subsection{Amenable groups}\label{amen}
A sequence $(F_n)_{n\in\N}$ of finite non-empty subsets of $G$ is \emph{F{\o}lner} if
\[
\lim_{n\to\infty}\frac{|gF_n\triangle F_n|}{|F_n|}=0 \quad \text{for all } g\in G.
\]
Its elements are \emph{F{\o}lner sets}.  
A group is \emph{amenable} if it has a F{\o}lner sequence.

For finite $K,F\subset G$ and $\eps>0$, the set $F$ is \emph{$(K,\eps)$-invariant} if
\[
\frac{|KF\triangle F|}{|F|}<\eps, \quad KF=\{kf:k\in K,f\in F\}.
\]
If $(F_n)$ is F{\o}lner, then for every finite non-empty $K$ and $\eps>0$, all sufficiently large $n$ give $(K,\eps)$-invariance.

A F{\o}lner sequence $\F=(F_n)_{n\ge 1}$ is \emph{tempered} if there exists $C>0$ with
\[
\Big|\bigcup_{k\le n}F_k^{-1}F_{n+1}\Big|\le C\,|F_{n+1}| \quad \text{for all } n\in\N.
\]
Every F{\o}lner sequence has a tempered subsequence \cite[Prop.~1.5]{Lindenstrauss01}, hence every amenable group admits one.  
For tempered sequences we use:

\begin{thm}[{\cite[Theorem~3.3]{Lindenstrauss01}}]\label{pointwise}
	If $G$ acts by measure-preserving maps on $(X,\mathcal B,\mu)$ and $(F_n)$ is tempered, then for each $\phi\in L^1(\mu)$ there is a $G$-invariant $\bar\phi\in L^1(\mu)$ with
	\[
	\lim_{n\to\infty}\frac{1}{|F_n|}\sum_{f\in F_n}\phi(fx)=\bar\phi(x)\quad\text{for $\mu$-a.e.\ }x.
	\]
	If $(X,G,\mu)$ is ergodic, then the limit equals $\int_X\phi\,d\mu$ $\mu$-a.e.
\end{thm}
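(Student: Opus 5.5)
This is Lindenstrauss's pointwise ergodic theorem for tempered F{\o}lner sequences, so the plan follows his argument: first a maximal inequality, then density.

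\textbf{Step 1: reduction to a maximal inequality.} I would prove a weak-type $(1,1)$ maximal inequality for the averages
\[
A_n\phi(x)=\frac{1}{|F_n|}\sum_{f\in F_n}\phi(fx),
\]
namely
\[
\mu\bigl(\{x:\sup_n A_n|\phi|(x)>\lambda\}\bigr)\le \frac{C'}{\lambda}\|\phi\|_1
\]
for a constant $C'$ depending only on the tempering constant $C$. The standard Banach principle then shows that the set of $\phi\in L^1(\mu)$ for which $A_n\phi$ converges almost everywhere is closed in $L^1$. It therefore suffices to prove convergence on a dense subspace.

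\textbf{Step 2: the dense subspace.} For the dense class I take $G$-invariant functions plus the $L^1$-closure of the span of coboundaries $\psi-\psi\circ g$ with $\psi\in L^\infty$. Invariant functions are trivially fixed by $A_n$. For a coboundary, the F{\o}lner property gives
\[
|A_n(\psi-\psi\circ g)|\le \|\psi\|_\infty\,\frac{|F_n\triangle F_ng|}{|F_n|}.
\]
Here I must be careful with sides: since the average uses $\phi(fx)$, the right-invariance condition appears after substitution. I would either use a F{\o}lner sequence with the matching invariance or write the coboundary as $\psi-\psi\circ g$ composed appropriately, so that the shifted set is $gF_n$. By a Hahn--Banach/Riesz argument in $L^2$ (the mean ergodic theorem), these spaces together are dense in $L^2$, hence in $L^1$.

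\textbf{Step 3: the main obstacle, the maximal inequality.} This is where temperedness is used, through Lindenstrauss's randomized Vitali covering lemma. By transference, fix $x$ and a large finite window $T\subset G$, and consider the set of centers $t$ where some $A_n|\phi|(tx)>\lambda$, choosing for each such $t$ the witnessing set $F_{n(t)}t$. The deterministic Vitali lemma fails for general F{\o}lner sets. Instead I would select shapes randomly, from the largest $n$ downward, picking each center independently with probability about $\delta/|F_n|$ among those not yet covered. The tempered condition $\bigl|\bigcup_{k<n}F_k^{-1}F_n\bigr|\le C|F_n|$ bounds the expected overlap of later (smaller) sets with earlier ones. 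This yields a random collection whose expected multiplicity is bounded while it covers a fixed fraction (depending on $C$) of the bad set. Summing $|\phi|$ over the chosen sets then gives the $L^1$ bound, and averaging over $T$ and letting $T$ grow transfers it to $\mu$. Controlling the overlap estimate in this random selection is the delicate part.

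\textbf{Step 4: identifying the limit.} The limit $\bar\phi$ is $G$-invariant: on the dense class this is clear, and it passes to $L^1$ by the maximal inequality. Finally, $\bar\phi\in L^1$ and $\int\bar\phi\,d\mu=\int\phi\,d\mu$ follow by Fatou's lemma and truncation. Under ergodicity, the invariant function $\bar\phi$ is almost everywhere constant, so it equals $\int_X\phi\,d\mu$.
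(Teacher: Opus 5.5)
The paper gives no proof of this statement; it quotes it from Lindenstrauss. Your overall architecture is his: a weak-type $(1,1)$ maximal inequality via transference, the Banach principle, density of invariant functions plus coboundaries, and identification of the limit. The gap is in Step~3, the only step that uses temperedness. There the mechanism you describe is not the one that works.

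You discard only centres that are already covered, and you expect temperedness to bound the overlap of later, smaller sets with earlier ones. Under that rule temperedness gives no such bound. Suppose $g$ lies in a selected set $F_jc'$. The level-$k$ centres whose translates contain $g$ form $F_k^{-1}g\subset F_k^{-1}F_jc'$, and nothing forces them to be covered. So every later level can again add up to $\delta$ to the expected multiplicity $\Lambda(g)=\#\{A\in\mathcal C: g\in A\}$.

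This really happens. Take $\mathbb Z$ with the tempered sequence $F_n=\{0,\dots,n-1\}$ and centre sets $\bar A_n=\{-n+1,\dots,0\}$ for $n\le N$. These are the sets $\{t:A_n\varphi(t)>\lambda\}$ for a single spike of height $>\lambda N$ at $0$. Every selected interval contains $0$. A selected interval starting at $t$ covers only the centres to the right of $t$, so centres further left stay available and keep being selected at lower levels. The expected multiplicity at $0$ grows like $\log N$. Hence your covering step gives no weak-type constant uniform in $N$.

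In Lindenstrauss's random covering lemma the roles are reversed. Going down from level $N$, one discards every centre $c$ whose translate $F_nc$ meets an already selected set, i.e.\ every $c\in\bigcup_{A\text{ selected}}F_n^{-1}A$. Each surviving centre is then selected independently with probability $\delta/|F_n|$.

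\emph{Multiplicity.} Selected sets from different levels are now disjoint. At the first level where $g$ is hit, the candidate centres lie in $F_n^{-1}g$. So the number of selected sets through $g$ is a sum of independent Bernoulli variables with total mean at most $\delta$. This gives $\mathbb E(\Lambda(g)\mid\Lambda(g)\ge1)\le1+\delta$ without any use of temperedness.

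\emph{Coverage.} This is where temperedness enters. A selected set $F_jc'$ disqualifies only centres in $\bigl(\bigcup_{k<j}F_k^{-1}F_j\bigr)c'$, and there are at most $C|F_j|$ of them. Each surviving centre contributes expected mass $\delta$. Therefore $|\bigcup_n\bar A_n|\le(C+\delta^{-1})\,\mathbb E\sum_{A\in\mathcal C}|A|$.

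Combining this with $\lambda\,\mathbb E\sum_{A\in\mathcal C}|A|\le\sum_g|\varphi(g)|\,\mathbb E\Lambda(g)$ gives the maximal inequality with constant $(1+\delta)(C+\delta^{-1})$.

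A minor point in Step~2. With the conventions here, $A_n(\psi\circ g)(x)=|F_n|^{-1}\sum_{f\in F_n}\psi(gfx)$ is an average over $gF_n$. So the bound is $\|\psi\|_\infty|gF_n\triangle F_n|/|F_n|$, and the given left F{\o}lner condition is exactly what is needed. No change of sequence is required, and none would be allowed since the sequence is fixed.
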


Amenable groups include all countable abelian groups.  
We assume throughout that $G$ is amenable.

\begin{lem}[{\cite[Lemma~2.6]{DHZ}}]\label{Z}
	Let finite non-empty $F,K\subset G$ and $\eps>0$. If $F$ is $(K,\eps/|K|)$-invariant, then
	\[
	|\{s\in F: Ks\subset F\}|\ge (1-\eps)|F|.
	\]
\end{lem}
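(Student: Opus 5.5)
We will argue by counting the complement. Put
\[
B:=\{s\in F: Ks\not\subset F\},
\]
so that it suffices to show $|B|\le \eps|F|$. The idea is to charge each bad point $s\in B$ to an element of $KF\setminus F$ that witnesses the failure. Each such element has at most $|K|$ preimages under the charging map, which costs a factor $|K|$. The hypothesis $(K,\eps/|K|)$-invariance is tuned exactly to absorb this factor.

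First, for each $s\in B$ pick $k_s\in K$ with $k_s s\notin F$. Since $s\in F$, the point $k_s s$ lies in $KF$, so $k_s s\in KF\setminus F\subset KF\triangle F$. Define
\[
\Phi\colon B\to KF\setminus F,\qquad \Phi(s)=k_s s.
\]

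Next, bound the fibres of $\Phi$. If $\Phi(s)=\Phi(s')=y$ and $k_s=k_{s'}=k$, then $ks=ks'$, and left cancellation in $G$ gives $s=s'$. Hence each fibre $\Phi^{-1}(y)$ contains at most one point for each value of $k\in K$, so $|\Phi^{-1}(y)|\le |K|$.

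Finally, combine the two steps:
\[
|B|\le |K|\,|KF\setminus F|\le |K|\,|KF\triangle F|< |K|\cdot\frac{\eps}{|K|}\,|F|=\eps|F|.
\]
Therefore
\[
|\{s\in F: Ks\subset F\}| = |F|-|B|\ge (1-\eps)|F|.
\]

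The only point needing care is the fibre bound, which is a single cancellation step. The rest is routine, so there is no real obstacle.
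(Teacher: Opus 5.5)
Your proof is correct: the charging map $s\mapsto k_s s$ from the bad set into $KF\setminus F$ has fibres of size at most $|K|$ by left cancellation, which gives $|\{s\in F: Ks\not\subset F\}|\le |K|\,|KF\setminus F|<\eps|F|$. The paper does not prove this lemma itself but quotes \cite[Lemma~2.6]{DHZ}; the standard argument for it is your count written as a union bound, namely $\{s\in F: Ks\not\subset F\}=\bigcup_{k\in K}\{s\in F: ks\notin F\}$, where each piece is in bijection with $kF\setminus F\subseteq KF\setminus F$.
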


\subsection{Residually finite groups}
A countable group $G$ is \emph{residually finite} if there exists a decreasing sequence $(H_n)_{n\in\N}$ of finite-index normal subgroups with $\bigcap_{n=0}^\infty H_n=\{e\}$ (the identity).  
For such $(H_n)$ and $g\in G$, write $[g]_m$ for the coset of $g$ in $G/H_m$.
Examples include free groups, finitely generated nilpotent and linear groups, and fundamental groups of $3$-manifolds.

\begin{thm}[{\cite[Lemma~4]{Cortez}}]\label{Cortez}
	If $G$ is amenable and residually finite, then there are finite-index normal subgroups $(H_n)_{n\ge0}$ and a F{\o}lner sequence $(F_n)_{n\in\N}$ such that:
	\begin{enumerate}
		\item $G=H_0\supset H_1\supset H_2\supset\cdots$ and $\bigcap_{n=0}^\infty H_n=\{e\}$,
		\item $\{e\}=F_0\subset F_1\subset F_2\subset\cdots$ and $\bigcup_{n=0}^\infty F_n=G$,
		\item for each $n$, $F_n$ is a fundamental domain of $G/H_n$,
		\item $F_{i+1}=\bigsqcup_{v\in F_{i+1}\cap H_i}F_iv$ for all $i\in\N$.
	\end{enumerate}
\end{thm}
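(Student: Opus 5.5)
We construct the subgroups $(H_n)$ and the sets $(F_n)$ together by induction, so that after each step $F_n$ is a fundamental domain of $G/H_n$ that is very invariant. First fix an enumeration $g_1,g_2,\dots$ of $G$ and a decreasing sequence $(N_k)$ of finite-index normal subgroups with trivial intersection, which exists by residual finiteness. Put $H_0=G$ and $F_0=\{e\}$. Suppose $H_i$ and $F_i$ have been built, with $F_i$ a fundamental domain for $G/H_i$ and $e\in F_i$. Let $K_{i+1}=F_i\cup\{g_1,\dots,g_{i+1}\}$. The aim of the step is to find a finite-index normal subgroup $H_{i+1}\subset H_i\cap N_{i+1}$ and a finite set $D\subset H_i$ with $e\in D$, satisfying two conditions. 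First, $D$ is a fundamental domain for $H_i/H_{i+1}$. Second, $F_{i+1}:=F_iD$ is $(K_{i+1},1/(i+1))$-invariant and contains $K_{i+1}$.

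Properties (3) and (4) then come for free. If $D$ is a transversal of $H_{i+1}$ in $H_i$ and $F_i$ is a transversal of $H_i$ in $G$, then $F_iD$ is a transversal of $H_{i+1}$ in $G$, and the union over $v\in D$ is disjoint. Moreover $D=F_{i+1}\cap H_i$, because $F_i\cap H_i=\{e\}$. Nestedness $F_i\subset F_{i+1}$ holds since $e\in D$. Also $\bigcap_n H_n=\{e\}$ because $H_{i+1}\subset N_{i+1}$, and $\bigcup_n F_n=G$ because $g_{i+1}\in F_{i+1}$. The Følner property follows from the increasing invariance parameters, since every $g$ eventually lies in $K_n$.

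The main obstacle is choosing $D$, and the plan rests on the following tiling fact. Since $H_i$ has finite index in the amenable group $G$, it is itself amenable. Let $K'$ be the finite set of elements of $H_i$ of the form $f^{-1}kf'$ with $k\in K_{i+1}$ and $f,f'\in F_i$. If $D\subset H_i$ is $(K',\eps)$-invariant inside $H_i$, then $F_iD$ is roughly $(K_{i+1},|F_i|\eps)$-invariant in $G$. To see this, write $kf=f'h$ with $h\in H_i$, so that $kfd=f'(hd)$, and $h$ ranges over $K'$; then count defects coset by coset.

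So it suffices to find a very invariant Følner set $D$ of $H_i$ that is at the same time a fundamental domain of a finite-index normal subgroup $H_{i+1}\subset H_i\cap N_{i+1}$ which is normal in $G$. Here is how I would do this. Take a Følner set $D_0\subset H_i$ and a subgroup $L\subset H_i\cap N_{i+1}$ of finite index, normal in $G$ (for instance the normal core), that is so deep that $D_0$ injects into $H_i/L$. This is possible because $D_0^{-1}D_0\setminus\{e\}$ is finite and the $N_k$ intersect trivially. Then $D_0$ is a partial transversal of $L$ in $H_i$. We must enlarge it to a full transversal while keeping invariance, and this is the genuine difficulty. The standard remedy, following Weiss's monotileable argument for residually finite groups, is to use the quasitiling of the finite group $H_i/L$ by translates of $D_0$. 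If $L$ is chosen deep enough, the finite quotient is itself "Følner-like", and one picks the tile from a tower so that an exact transversal with small boundary results. I would follow Cortez--Petite's Lemma 4 construction here. Once $H_{i+1}=L$ and $D$ are fixed, the induction closes.
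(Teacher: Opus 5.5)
The paper does not prove this statement; it quotes it from Cortez--Petite, so your outline has to stand on its own. Your outer induction is sound and is the natural architecture: $F_{i+1}=F_iD$ with $D\ni e$ a transversal of $H_{i+1}$ in $H_i$. Your checks of (1)--(4), of $D=F_{i+1}\cap H_i$, and of the transfer of invariance through $K'$ are all correct. In fact $|K_{i+1}F_iD\setminus F_iD|\le |F_i|\,|K'D\setminus D|$, so you even get $(K_{i+1},\eps)$ rather than $|F_i|\eps$.

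\textbf{The gap.} The one nontrivial step is not proved. You need, for every finite $K'\subset H_i$ and $\eps>0$, a finite-index $L\lhd G$ with $L\subset H_i\cap N_{i+1}$ that admits a $(K',\eps)$-invariant \emph{exact} transversal in $H_i$. This is Weiss's monotiling theorem for residually finite amenable groups, and it is the real content of the statement. Deferring to ``Cortez--Petite's Lemma~4 construction'' is circular, since that lemma is the statement itself. The sketch you give also fails as written. Translates of a single F{\o}lner set $D_0$ cannot in general cover a $(1-\eps)$-proportion of $H_i/L$ while being $\eps$-disjoint: nearly disjoint translates of a large discrete disc in $\Z^2$ leave a definite proportion of a large torus uncovered, because the packing density of discs is $\pi/\sqrt{12}<1$. ``Picking the tile from a tower'' does not by itself produce an exact transversal either.

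\textbf{How to close it.} Either cite Weiss's theorem, in the form ``every sufficiently deep finite-index normal subgroup has arbitrarily invariant fundamental domains'', or run the Ornstein--Weiss argument with many shapes at once:
\begin{enumerate}
\item Take F{\o}lner sets $T_1\subset\cdots\subset T_m$ of $H_i$, $m=m(\eps)$, each $T_{j+1}$ very invariant with respect to $T_jT_j^{-1}$, and all very $K'$-invariant.
\item Choose $L=N_k\cap H_i$ (already normal in $G$) so deep that $H_i\to H_i/L$ is injective on $\bigcup_j T_jT_j^{-1}T_{j+1}$. Then the relative invariance of the tower survives in the finite group $H_i/L$.
\item $\eps$-quasitile $H_i/L$ itself, which is trivially invariant, by right translates of the images of the $T_j$, and trim the tiles to be disjoint.
\item Lift each trimmed tile injectively to a right translate of a subset of some $T_j$, and add arbitrary lifts of the at most $\eps|H_i/L|$ uncovered cosets.
\end{enumerate}
Lifts of disjoint tiles meet distinct cosets, so the union $D$ is an exact transversal. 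The $K'$-boundary is subadditive over the pieces, so $|K'D\setminus D|\le C|K'|\eps\,|D|$.

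\textbf{A smaller gap.} You list $g_{i+1}\in F_{i+1}$ as a requirement but never arrange it, and it does not come for free. In $\Z$, $F_n=\{0,\dots,2^n-1\}$ satisfies everything else but exhausts only $\N$. To fix it, write $g_{i+1}=fh$ with $f\in F_i$, $h\in H_i$, and take $L$ with $h\notin L$ unless $h=e$. Right-translate $D$ so that $e\in D$, then replace the representative of $hL$ by $h$. Since $|D|$ can be taken large, this costs negligible invariance.
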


\begin{rem}
	The F{\o}lner sequence in Theorem~\ref{Cortez} may be chosen tempered.
\end{rem}

\begin{defn}\label{Cortez-Petite}
	A F{\o}lner sequence with the properties of Theorem~\ref{Cortez} is a \emph{Cortez--Petite F{\o}lner sequence}; the chain $(H_n)$ is its \emph{associated Cortez--Petite sequence}.
\end{defn}

\subsection{Invariant measures and generic points}
Let $\M(X)$ be the set of Borel probability measures on $X$, equipped with the \emph{Prokhorov metric} $\di$:
\[
\di(\mu,\nu)=\inf\{\eps>0:\mu(B)\le \nu(B^\eps)+\eps\ \text{for all Borel }B\subset X\},
\]
where $B^\eps=\{y\in X:\exists x\in B \text{ with } \rho(x,y)<\eps\}$.  
Then $(\M(X),\di)$ is compact and $\di$ induces the weak$^*$ topology (see \cite[p.~238]{Billingsleybook}, \cite{Prohorov}).  
By the portmanteau theorem, $\di(\mu_n,\mu)\to0$ iff $\mu_n(A)\to\mu(A)$ for Borel $A$ with $\mu(\partial A)=0$.

Let $\M_G(X)$ be the simplex of $G$-invariant measures; for amenable $G$ it is non-empty.  
Let $\M_G^e(X)$ be its ergodic part.

For $z\in X$, let $\hat\delta_z$ be the Dirac measure at $z$.  
For finite non-empty $F\subset G$ and $x\in X$, define the \emph{empirical measure}
\[
\m(x,F)=\frac{1}{|F|}\sum_{f\in F}\hat\delta_{fx}.
\]
A point $x$ is \emph{generic} for $\mu\in\M_G(X)$ with respect to a F{\o}lner sequence $(F_n)$ if $\m(x,F_n)\to\mu$ in the weak$^*$ topology.  
If $\F$ is tempered and $\mu\in\M_G^e(X)$, then generic points for $\mu$ form a set of full $\mu$-measure (Thm.~\ref{pointwise}).  
A point is \emph{ergodic} if it is generic for some ergodic measure.

For $x\in X$ write $O_G(x)=\{gx:g\in G\}$ for its orbit.  
If $x$ is generic for $\mu$, then $\supp(\mu)\subset \overline{O_G(x)}$.  
A point $x$ is \emph{uniquely ergodic} if $\M_G(\overline{O_G(x)})$ is a singleton; then $x$ is generic for that measure.  
Write $\M_G(x)=\M_G(\overline{O_G(x)})$.

\subsection{Symbolic dynamics}
Let $\alf$ be a finite alphabet with the discrete topology.  
Equip $\alf^G$ with the product topology and the \emph{shift action} $(gx)_h=x_{hg}$.  
The set of finite \emph{words} is
\[
\alf^*=\{w\colon F\to\alf: F\subset G \text{ finite}\}.
\]
For finite non-empty $F\subset G$ and $x\in\alf^G$ define the \emph{cylinder}
\[
[x,F]=\{z\in\alf^G: z_{|F}=x_{|F}\}.
\]
For a word $w\in\alf^*$ with domain $F$, set $[w]=\{z\in\alf^G: z_{|F}=w\}$.  
Cylinders are clopen and form a countable base.

\subsection{Upper asymptotic density}
For finite non-empty $F\subset G$ and $A\subset G$, let $D_F(A)=|A\cap F|/|F|$.  
Given a F{\o}lner sequence $\F=(F_n)$, define the \emph{upper asymptotic density}
\[
\overline d_{\F}(A)=\limsup_{n\to\infty} D_{F_n}(A).
\]

\subsection{The Besicovitch pseudometric}
For a F{\o}lner sequence $\F=(F_n)$, define for $\underline x=(x_g)_{g\in G},\underline x'=(x'_g)_{g\in G}\in X^G$
\[
D_{B,\F}(\underline x,\underline x')=\limsup_{n\to\infty}\frac{1}{|F_n|}\sum_{g\in F_n}\rho(x_g,x'_g).
\]
For $x,x'\in X$ set
\[
D_{B,\F}(x,x')=D_{B,\F}\big((gx)_{g\in G},(gx')_{g\in G}\big).
\]

Let $\hat\omega_{\F}(x)$ be the set of weak$^*$ accumulation points of $(\m(x,F_n))$.  
It is non-empty, closed, and contained in $\M_G(X)$ (cf.\ \cite[Thm.~8.10]{Ward}).  
If $\mu\in\hat\omega_{\F}(x)$, we say that $x$ \emph{quasi-generates} $\mu$ along $\F$.

Let $\HD$ be the Hausdorff metric on non-empty compact subsets of $\M(X)$ induced by $\di$:
\[
\HD(A,B)=\max\Big\{\inf\{\eps>0:A\subset B^\eps\},\ \inf\{\eps>0:B\subset A^\eps\}\Big\}.
\]

We use the following (see \cite[Prop.~1, Cor.~1]{DI}, \cite[Cor.~11, Thm.~17]{Straszak}, \cite[Thm.~4, Thm.~7]{KLOg}).

\begin{thm}\label{HD}
	For every $\eps>0$ there exists $\delta>0$ such that $D_{B,\F}(\underline x,\underline x')<\delta$ implies $\HD(\hat\omega_{\F}(\underline x),\hat\omega_{\F}(\underline x'))<\eps$ for all $\underline x,\underline x'\in X^G$.  
	The choice of $\delta$ is independent of $\F$.
\end{thm}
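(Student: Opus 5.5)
The plan is to show directly that $\delta:=\eps/2$ works, independently of $\F$. Fix $\underline x,\underline x'\in X^G$ with $D_{B,\F}(\underline x,\underline x')<\delta$. Here $\hat\omega_{\F}(\underline x)$ denotes the set of accumulation points of the empirical measures $\m(\underline x,F_n)=\frac1{|F_n|}\sum_{g\in F_n}\hat\delta_{x_g}$. By symmetry it suffices to show that every $\mu\in\hat\omega_{\F}(\underline x)$ lies within $\di$-distance at most $\delta'<\eps$ of some $\nu\in\hat\omega_{\F}(\underline x')$. The Hausdorff distance is then at most $\delta'<\eps$.

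The key estimate is a coupling bound for empirical measures. For a finite $F$, put $\gamma=\frac1{|F|}\sum_{g\in F}\hat\delta_{(x_g,x'_g)}$. This is a coupling of $\m(\underline x,F)$ and $\m(\underline x',F)$, and $\int\rho\,d\gamma=\frac1{|F|}\sum_{g\in F}\rho(x_g,x'_g)=:a$. Take $t>0$ and a Borel set $B$. We have
\[
\m(\underline x,F)(B)\le \m(\underline x',F)(B^t)+\gamma\{\rho\ge t\}\le \m(\underline x',F)(B^t)+a/t
\]
by Markov's inequality. Choosing $t=\sqrt a$ gives $\di(\m(\underline x,F),\m(\underline x',F))\le\sqrt a$. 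The argument is symmetric in the two sequences. So if $a$ is small, the two empirical measures are close, with a bound depending only on $a$ and not on $F$ or $\F$.

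Now let $\mu=\lim_k \m(\underline x,F_{n_k})$. Since $X$ is compact, $\M(X)$ is compact, so after passing to a further subsequence we may assume $\m(\underline x',F_{n_k})\to\nu\in\hat\omega_{\F}(\underline x')$. By the definition of $\limsup$ in $D_{B,\F}$, for all large $k$ the averages $a_k$ satisfy $a_k<\delta$. Hence $\di(\m(\underline x,F_{n_k}),\m(\underline x',F_{n_k}))\le\sqrt\delta$. The Prokhorov metric is a metric inducing the weak$^*$ topology, so it is continuous under weak$^*$ limits, and passing to the limit gives $\di(\mu,\nu)\le\sqrt\delta$. It therefore suffices to take $\delta$ with $\sqrt\delta<\eps$, for instance $\delta=\eps^2/4$, instead of $\eps/2$.

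The only delicate point is this Markov-type conversion from an average distance to a Prokhorov bound. It is exactly what removes any dependence on $F_n$, and hence on $\F$. The rest is compactness. The estimate $\di\le\sqrt a$ is standard, and one could use the Ky Fan or Strassen relation instead, but the direct argument above is self-contained.
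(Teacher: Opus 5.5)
Your proof is correct. The paper does not prove Theorem~\ref{HD} itself; it quotes it from \cite{DI}, \cite{Straszak} and \cite{KLOg}, so your argument is a self-contained substitute rather than a variant of something in the text.

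The diagonal coupling $\gamma$ of the two empirical measures, combined with Markov's inequality, gives the levelwise estimate $\di\bigl(\m(\underline x,F),\m(\underline x',F)\bigr)\le\sqrt{a_F}$ for every finite $F$, in both directions. Compactness of $\M(X)$ and continuity of $\di$ then pass this bound to accumulation points. The payoff over the citation is an explicit modulus $\delta=\eps^2/4$ and a transparent reason for the independence of $\F$. You never use the F{\o}lner property, invariance of the limit measures, or $\diam(X)=1$, so the bound holds along any sequence of finite non-empty sets.

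One correction: delete your opening claim that $\delta=\eps/2$ works, because the square-root rate is sharp. Take $G=\Z$, $F_n=\{0,\dots,n-1\}$ and $X=[0,1]$. Let $x_g=0$ for all $g$, and let $x'_g=1/q$ for $g\in q\Z$ and $x'_g=0$ otherwise. Then $D_{B,\F}(\underline x,\underline x')=1/q^2<\tfrac{1}{2q}$ for $q\ge 3$. However, $\hat\omega_{\F}(\underline x)=\{\hat\delta_0\}$ and $\hat\omega_{\F}(\underline x')=\{\nu_q\}$ with $\nu_q=(1-\tfrac1q)\hat\delta_0+\tfrac1q\hat\delta_{1/q}$, and testing $B=\{0\}$ shows $\di(\hat\delta_0,\nu_q)\ge 1/q$. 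So with $\eps=1/q$ the hypothesis $D_{B,\F}<\eps/2$ holds while $\HD\ge\eps$. Only your final choice $\delta=\eps^2/4$ is valid.
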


\begin{thm}\label{unequ}
	Let $\alf$ be finite with the discrete topology.  
	For any F{\o}lner sequence $\F=(F_n)$, the Besicovitch pseudometric on $\alf^G$ (for any admissible metric) is uniformly equivalent to
	\[
	\bar d_{\F}(x,x')=\limsup_{n\to\infty}\frac{|\{f\in F_n: x_f\neq x'_f\}|}{|F_n|}.
	\]
	The modulus of uniform equivalence does not depend on $\F$.
\end{thm}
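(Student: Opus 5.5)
We need to prove Theorem~\ref{unequ}. We take the admissible metric on $\alf^G$ to be $\rho(x,x')=2^{-k}$, where $k=\sup\{n: x_{|F_n'}=x'_{|F_n'}\}$ for a fixed exhaustion of $G$ by finite sets $F_n'$. For a different admissible metric, uniform continuity of the identity map between the two metrics on the compact space transfers everything, with a modulus that depends only on the metrics and not on $\F$.

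The comparison rests on a pointwise observation. For $g\in G$ we have $(gx)_h=x_{hg}$, so
\[
\rho(gx,gx')\ge \rho_0:=\min\{\rho(a,b): a\neq b \text{ in the coordinate } e\}>0
\]
whenever $x_g\neq x'_g$. More precisely, by compactness there is a constant $c>0$ such that $x_e\neq x'_e$ implies $\rho(x,x')\ge c$, since the set $\{(x,x'):x_e\neq x'_e\}$ is compact and disjoint from the diagonal. Averaging over $F_n$ gives
\[
\frac{1}{|F_n|}\sum_{g\in F_n}\rho(gx,gx')\ge c\,\frac{|\{g\in F_n: x_g\neq x'_g\}|}{|F_n|}.
\]
Taking $\limsup$ yields $D_{B,\F}\ge c\,\bar d_\F$, and $c$ does not depend on $\F$.

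For the converse, fix $\eps>0$. Choose a finite set $K\ni e$ such that $x_{|K}=x'_{|K}$ implies $\rho(x,x')<\eps/2$; such a $K$ exists by uniform continuity. If $\rho(gx,gx')\ge\eps/2$, then $x_{kg}\neq x'_{kg}$ for some $k\in K$, that is, $g\in K^{-1}E$ with $E=\{h:x_h\neq x'_h\}$. Therefore, using $\diam X=1$,
\[
\frac{1}{|F_n|}\sum_{g\in F_n}\rho(gx,gx')\le \frac{\eps}{2}+\frac{1}{|F_n|}\sum_{k\in K}|\{g\in F_n: kg\in E\}|.
\]
We have $|\{g\in F_n:kg\in E\}|=|kF_n\cap E|\le |F_n\cap E|+|kF_n\triangle F_n|$. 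By the F{\o}lner property, the error term $|kF_n\triangle F_n|/|F_n|$ tends to $0$ for each of the finitely many $k\in K$. Taking $\limsup$ therefore gives
\[
D_{B,\F}(x,x')\le \frac{\eps}{2}+|K|\,\bar d_\F(x,x').
\]
Choosing $\delta=\eps/(2|K|)$ then gives $D_{B,\F}<\eps$ whenever $\bar d_\F<\delta$.

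The F{\o}lner error terms disappear in the $\limsup$ for every F{\o}lner sequence, so $\delta$ depends only on $\eps$, $K$ and $\rho$. Hence the modulus is independent of $\F$.

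The main point requiring care is precisely this uniformity in $\F$. It is handled by observing that the only place where $\F$ enters is the asymptotically vanishing boundary term, which is killed by the $\limsup$ for every F{\o}lner sequence. The remaining steps are routine.
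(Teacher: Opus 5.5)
Your proof is correct. The paper gives no proof of this statement and only cites \cite{DI}, \cite{Straszak} and \cite{KLOg}; your two bounds are the standard argument for this fact: $D_{B,\F}\ge c\,\bar d_{\F}$ from the identity coordinate via $(gx)_e=x_g$, and $D_{B,\F}\le \eps/2+|K|\,\bar d_{\F}$ from a finite window $K$ together with the left F{\o}lner condition (the opening reduction to the metric $2^{-k}$ is superfluous, since both $c$ and $K$ come from compactness for any admissible metric).
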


The topology induced by the Besicovitch pseudometric is not compact, so \emph{equivalence} and \emph{uniform equivalence} differ.

\subsection{Entropy}\label{entropy}

Let $G$ be an amenable group acting on $X$ by $\mu$-preserving measurable maps. For a finite measurable partition $\mathcal P$ and a finite set $F\subset G$ define
\[\mathcal P^F=\bigvee_{g\in F}g^{-1}\mathcal P\quad\text{where}\quad g^{-1}\mathcal P = \{g^{-1}A\, :\, A\in\mathcal P\}.\] The \emph{Shannon entropy} of $\mathcal P$ is given by
\[H_{\mu}(\mathcal P)= -\sum_{A\in\mathcal P}\mu(A)\log\mu(A).\]
The \emph{dynamical entropy of the action of $G$ with respect to a partition $ \mathcal P$ and the measure $\mu$} or the \emph{dynamical entropy of a measure with respect to a partition} (depending on what is fixed and what is our variable) is defined as \[h_{\mu}(\mathcal P, G) = \limsup\limits_{n\to\infty}\frac{1}{|F_n|}H_{\mu}(\mathcal P^{F_n}),\]
where $(F_n)_{n\in\N}$ is a \Folner sequence in $G$ (the above quantity does not depend on the choice of the \Folner sequence \cite{DFR, OrnsteinWeiss}).
The \emph{dynamical entropy} of the action of $G$ with respect to $\mu$ (or of $\mu$ with respect to the action) is obtained by taking the supremum of the above numbers over all finite measurable partitions. We will denote it by $h(G, \mu)$ or $h(\mu)$ if it will be clear from the context what action we have in mind.


The next theorem is folklore (see \cite{Bowen}), so for the sake of completeness we present its proof in Appendix \ref{appendix}.
\begin{restatable}{thm}{kalikow}\label{Kalikow}
	Let $\alf$ be a finite alphabet with $|\alf|\geq 2$, let $G$ be a countable amenable group, and
	let $\F=(F_n)_{n\in\N}$ be a F{\o}lner sequence in $G$. As usual, $G$ acts
	on $\alf^G$ by the shift. Consider two $G$-invariant measures $\mu$ and
	$\nu$ and ergodic points $x,z\in\alf^G$ generating $\mu$ and $\nu$,
	respectively. Then
	\[
		|h(\mu)-h(\nu)|
		\le
		h_2\bigl(\dbar_{\F}(\mu,\nu)\bigr)
		+
		\dbar_{\F}(\mu,\nu)\log(|\alf|-1).
	\]
	Here
	\[
		h_2(t)=-t\log t-(1-t)\log(1-t),
		\qquad h_2(0)=0,
	\]
	and $\dbar_{\F}(\mu,\nu)$ denotes the joining $\dbar$-distance between
	$\mu$ and $\nu$ described in Appendix~\ref{appendix}.
\end{restatable}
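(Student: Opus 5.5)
We will prove Theorem~\ref{Kalikow} by the standard joining argument, reducing the entropy difference to a conditional entropy estimate. Set $t=\dbar_{\F}(\mu,\nu)$. We take the joining $\dbar$-distance to be the infimum, over $G$-invariant joinings $\lambda$ of $\mu$ and $\nu$ on $\alf^G\times\alf^G$, of $\lambda(\{(y,y'):y_e\neq y'_e\})$. We assume this infimum is attained, or otherwise work with a joining whose value is within $\eta$ of $t$ and let $\eta\to0$ at the end. The generic points $x,z$ are used only to ensure this quantity is well defined and comparable with the pointwise $\dbar_\F$ along $\F$. The inequality itself is purely measure-theoretic.

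Fix such a joining $\lambda$ and let $\mathcal P,\mathcal Q$ be the time-zero partitions of the first and second coordinates according to the symbol at $e$. These are generating partitions for $\mu$ and $\nu$, so by the Kolmogorov--Sinai theorem for amenable actions $h(\mu)=h_\lambda(\mathcal P,G)$ and $h(\nu)=h_\lambda(\mathcal Q,G)$. We then estimate
\[
\tfrac{1}{|F_n|}H_\lambda(\mathcal P^{F_n})\le \tfrac{1}{|F_n|}H_\lambda(\mathcal Q^{F_n})+\tfrac{1}{|F_n|}H_\lambda(\mathcal P^{F_n}\mid\mathcal Q^{F_n})
\]
and bound the last term by subadditivity of conditional entropy:
\[
H_\lambda(\mathcal P^{F_n}\mid\mathcal Q^{F_n})\le\sum_{g\in F_n}H_\lambda(g^{-1}\mathcal P\mid g^{-1}\mathcal Q)=|F_n|\,H_\lambda(\mathcal P\mid\mathcal Q),
\]
where the equality uses $G$-invariance of $\lambda$.

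The key local estimate is Fano's inequality. Conditioned on the second symbol $b$, the first symbol equals $b$ with probability $1-p_b$, and otherwise takes one of at most $|\alf|-1$ other values. This gives
\[
H(\mathcal P\mid\mathcal Q)\le\sum_b \lambda(\mathcal Q=b)\bigl(h_2(p_b)+p_b\log(|\alf|-1)\bigr)\le h_2(t)+t\log(|\alf|-1),
\]
where the last step uses concavity of $h_2$ and $\sum_b\lambda(\mathcal Q=b)p_b=t$. Taking the limsup in $n$ yields $h(\mu)-h(\nu)\le h_2(t)+t\log(|\alf|-1)$. Exchanging the roles of $\mu$ and $\nu$ gives the absolute value.

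The only step requiring care is the first: we must make sure that the appendix's definition of $\dbar_\F(\mu,\nu)$ coincides with, or is bounded below by, the joining infimum. If it is instead defined via generic points, say as $\inf \dbar_\F(x,z)$, we take a limit point of $\frac1{|F_n|}\sum_{g\in F_n}\hat\delta_{(gx,gz)}$. This produces a $G$-invariant joining, using that $x,z$ generate $\mu,\nu$, whose mismatch probability at $e$ is at most $\dbar_\F(x,z)$, because the cylinder $\{y_e\neq y'_e\}$ is clopen. We also need monotonicity of $h_2(t)+t\log(|\alf|-1)$ for $t\le 1-1/|\alf|$. For larger $t$ the bound already exceeds $\log|\alf|\ge|h(\mu)-h(\nu)|$, so the monotonicity issue is harmless.
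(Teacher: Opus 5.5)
Your proof is correct and follows the paper's route: an optimal invariant joining $\lambda\in\mathcal J(\mu,\nu)$, the time-zero partitions $\mathcal P,\mathcal Q$, subadditivity of conditional entropy plus $G$-invariance to reduce to a single site, and Fano's inequality (which you derive via concavity of $h_2$ instead of citing). The infimum is attained because $\mathcal J(\mu,\nu)$ is weak$^*$ compact and $\{y_e\neq y'_e\}$ is clopen, so no $\eta$-approximation is needed. The paper's extra step of passing to an \emph{ergodic} optimal joining (Lemma~\ref{lem:infmin}) is never used in the estimate, as your version shows.

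One correction to your last paragraph. In fact $h_2(t)+t\log(|\alf|-1)\le\log|\alf|$ for every $t\in[0,1]$: the function peaks at $t=1-1/|\alf|$ and decreases to $\log(|\alf|-1)$ at $t=1$. So your claim that the bound exceeds $\log|\alf|$ for larger $t$ is false. This is harmless, because with the appendix's joining definition Fano's inequality is applied at exactly $t=\dbar_{\F}(\mu,\nu)$, and it holds for every error probability in $[0,1]$, so monotonicity is never needed. In your generic-point fallback you can also avoid monotonicity: take the weak$^*$ limit along a subsequence realizing the $\limsup$ in $\dbar_{\F}(x,z)$. This yields a joining whose mismatch probability equals $\dbar_{\F}(x,z)$ exactly.
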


This corollary follows directly from Theorem~\ref{Kalikow} and \cite[Theorem 3.13]{BLM}.
\begin{cor}\label{cor:entropy-continuity}
	Entropy is uniformly continuous with respect to $\dbar$ on the set of points generating ergodic measures. 
\end{cor}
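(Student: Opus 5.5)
The corollary says: for every $\eps>0$ there is $\delta>0$ such that, whenever $x,z\in\alf^G$ generate ergodic measures $\mu,\nu$ along $\F$ and $\dbar_{\F}(x,z)<\delta$, we have $|h(\mu)-h(\nu)|<\eps$. I would obtain this by composing two estimates.

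\textbf{Step 1: the joining distance is bounded by the point distance.} I would use \cite[Theorem 3.13]{BLM} in the form: if $x$ and $z$ are generic for $\mu$ and $\nu$ along $\F$, then
\[
\dbar_{\F}(\mu,\nu)\le \dbar_{\F}(x,z).
\]
The heuristic behind it is this. Pass to a subsequence of $\F$ along which the empirical measures $\m((x,z),F_n)$ of the pair converge. The limit is a $G$-invariant measure on $\alf^G\times\alf^G$. Its marginals are $\mu$ and $\nu$, so it is a joining. Its mass on the clopen set $\{(a,b): a_e\neq b_e\}$ is the limit of the frequency of disagreements along that subsequence, which is at most $\dbar_{\F}(x,z)$. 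Here I rely on the reference and do not rederive it.

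\textbf{Step 2: apply Theorem~\ref{Kalikow}.} Set $\omega(t)=h_2(t)+t\log(|\alf|-1)$ for $t\in[0,1/2]$. Theorem~\ref{Kalikow} gives $|h(\mu)-h(\nu)|\le \omega\bigl(\dbar_{\F}(\mu,\nu)\bigr)$. The function $\omega$ is continuous, vanishes at $0$, and is nondecreasing on a small interval $[0,t_0]$, since $h_2$ increases on $[0,1/2]$. Combining with Step 1, for $\dbar_{\F}(x,z)\le t_0$ we get
\[
|h(\mu)-h(\nu)|\le \omega\bigl(\dbar_{\F}(x,z)\bigr).
\]
This bound depends only on $\dbar_{\F}(x,z)$ and not on the particular points, which is exactly uniform continuity. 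To finish, given $\eps>0$ choose $\delta\le t_0$ with $\omega(\delta)<\eps$.

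\textbf{Step 3: passing to the Besicovitch pseudometric.} If uniform continuity with respect to the Besicovitch pseudometric $D_{B,\F}$ is also wanted, it follows by composing with the uniform equivalence in Theorem~\ref{unequ}.

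\textbf{Main obstacle.} The delicate point is Step 1. One must check that the conventions of \cite{BLM} (which F{\o}lner sequence, whether the result is stated for $\limsup$ or $\liminf$) match the definition of $\dbar_{\F}(\mu,\nu)$ in Appendix~\ref{appendix}. One also needs the monotonicity of $\omega$ near $0$ to pass from the bound on $\dbar_{\F}(\mu,\nu)$ to a bound in terms of $\dbar_{\F}(x,z)$. Everything else is routine.
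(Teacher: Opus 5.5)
Your proposal is correct and follows the paper's route: the paper obtains the corollary in one line from Theorem~\ref{Kalikow} combined with \cite[Theorem 3.13]{BLM}, which is the composition you describe. Your empirical-joining sketch of $\dbar_{\F}(\mu,\nu)\le\dbar_{\F}(x,z)$ is in fact a complete proof, since every subsequential limit of $\m((x,z),F_n)$ is a joining, so the $\limsup$/$\liminf$ convention you worried about does not matter; likewise monotonicity of $\omega$ is not needed, because continuity of $\omega$ at $0$ already suffices.
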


\section{Entropy density for full shifts over residually finite amenable groups}\label{geste}

We say that a subset $A\subset \M_G(X)$ is \emph{entropy dense} in
$B\subset \M_G(X)$ if for every $\mu\in B$ there exists a sequence
$(\mu_n)_{n\in\N}\subset A$ such that
\[
  \mu_n \to \mu \quad\text{in the weak$^*$ topology}
  \quad\text{and}\quad
  h(\mu_n)\to h(\mu).
\]
The goal of this section is to prove Corollary~\ref{glowny}.

\subsection{Auxiliary Lemmas}

We begin with a series of lemmas. For notational convenience, we introduce Definition~\ref{epsequal}.
\begin{defn}\label{epsequal}
	For $\eps>0$ we say that $s,t\in\R$ are \emph{$\eps$-close} if $|s-t|<\eps$.
	\end{defn}

\begin{lem}\label{lemma:2}
	Fix $x\in\alf^G$ and a tempered \Folner sequence $\F=(F_j)_{j\in\N}$ in $G$. If for every $w\in\alf^*$ and $\eps>0$ there is $K\in\N$ for which all numbers in the set
	\[\big\{\m(x,F_kg)([w])\,:\,g\in G,\,k\geq K\big\}\] are pairwise $\eps$-close, then $x$ is uniquely ergodic.
\end{lem}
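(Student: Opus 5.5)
Let $X=\overline{O_G(x)}$. It suffices to show that for every $\nu\in\M_G(X)$ and every $w\in\alf^*$ the value $\nu([w])$ is one fixed number $c_w$, independent of $\nu$. Cylinders form a countable base of clopen sets, so this forces $\M_G(X)$ to be a singleton. The strategy is to sandwich $\nu([w])$ by empirical frequencies of $x$ along translated \Folner sets $F_kg$; by hypothesis these frequencies are uniformly close to one another.

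\emph{Step 1: the common value $c_w$.} Fix $w$. For each $\eps>0$ the hypothesis gives $K(\eps)$ such that all values $\m(x,F_kg)([w])$ with $k\ge K(\eps)$ and $g\in G$ lie in an interval of length at most $\eps$. Take $g=e$. The sequence $\m(x,F_k)([w])$ is then Cauchy, so it converges to some $c_w$. Moreover $|\m(x,F_kg)([w])-c_w|\le\eps$ for all $k\ge K(\eps)$ and all $g$.

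\emph{Step 2: from points of the orbit closure to $x$.} Let $y\in X$, $k\ge K(\eps)$, and write $F=F_k$. Since $[w]$ is clopen, the map $z\mapsto\m(z,F)([w])=\frac1{|F|}\sum_{f\in F}1_{[w]}(fz)$ depends only on the coordinates of $z$ in the finite set $\operatorname{dom}(w)\cdot F$, so it is locally constant. Because $y$ is a limit of points $gx$, there is $g$ with $\m(y,F)([w])=\m(gx,F)([w])$. Now $\m(gx,F)=\frac1{|F|}\sum_{f\in F}\hat\delta_{fgx}=\m(x,Fg)$, hence $|\m(y,F_k)([w])-c_w|\le\eps$ for every $y\in X$.

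\emph{Step 3: integrate against $\nu$.} Let $\nu\in\M_G(X)$. By $G$-invariance of $\nu$,
\[
\nu([w])=\int_X\frac1{|F_k|}\sum_{f\in F_k}1_{[w]}(fy)\,d\nu(y)=\int_X\m(y,F_k)([w])\,d\nu(y).
\]
By Step 2 the integrand is within $\eps$ of $c_w$ on all of $X$, so $|\nu([w])-c_w|\le\eps$. Since $\eps$ is arbitrary, $\nu([w])=c_w$ for every $\nu$, which is what we wanted.

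This argument does not use the temperedness of $\F$ or the pointwise ergodic theorem. The only delicate point is Step 2: one must check that the translation convention matches, i.e.\ $\m(gx,F)=\m(x,Fg)$ under the action $(gx)_h=x_{hg}$, and that $\m(\cdot,F)([w])$ is locally constant, so that approximating $y$ by orbit points transfers the bound exactly rather than only approximately.
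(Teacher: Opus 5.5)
Your proof is correct, and it takes a genuinely different route from the paper.

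\textbf{The paper's route.} The paper fixes $\mu\in\M_G(x)$ and imports the construction from Straszak's Lemma~22. This produces auxiliary \Folner sequences $\F^\eta$, each term a disjoint union of finitely many right translates $F_jg_i^{(j,\eta)}$, along which $x$ quasi-generates measures within $\eta$ of $\mu$. The hypothesis then forces all measures quasi-generated along any such auxiliary sequence to coincide, so every $\mu$ equals one fixed measure.

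\textbf{Your route.} You give the classical Oxtoby-type argument instead. The hypothesis says that $g\mapsto\m(gx,F_k)([w])=\m(x,F_kg)([w])$ is uniformly within $\eps$ of a constant $c_w$ for $k\ge K(\eps)$. Local constancy of $z\mapsto\m(z,F_k)([w])$ (it depends only on $z$ restricted to $D_wF_k$) transfers this bound exactly to every point of the orbit closure. The identity $\nu([w])=\int\m(y,F_k)([w])\,d\nu(y)$, valid for every invariant $\nu$, then pins down $\nu([w])$.

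\textbf{Comparison.} Your argument is shorter and self-contained, since it avoids the external approximation lemma. As you note, it uses neither temperedness nor the pointwise ergodic theorem. In fact it does not even use the \Folner property of $(F_k)$: only $G$-invariance of $\nu$ and density of the orbit of $x$ in $X$ enter. The paper's route keeps the proof in the language of quasi-generic points and $\hat\omega_{\F}$ that is used elsewhere in the paper, at the cost of relying on Straszak's construction.

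\textbf{Checks.} Your translation convention is right: $(f(gx))_h=x_{hfg}=((fg)x)_h$, so the shift is a left action and $\m(gx,F)=\m(x,Fg)$. The only thing left implicit is that $\M_G(X)\neq\emptyset$. This holds because $G$ is amenable, and your argument shows that this set has at most one element.
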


\begin{figure}[h]
	\centering
	\begin{tikzpicture}[font=\small, scale=0.7]
		
		\tikzset{
			folner/.style={draw=blue!70, line width=1pt},
			shiftset/.style={draw=green!60, line width=1pt},
			wtile/.style={fill=red!20, draw=red!80, line width=1pt}
		}
		
		\draw[step=0.5cm, gray!20, thin] (-8,-5.5) grid (9,5.5);
		
		\draw[folner] (-1,-1) rectangle (1,1);      
		\draw[folner] (-2,-1.5) rectangle (2,1.5);  
		\draw[folner] (-3,-2) rectangle (3,2);      
		
		\node[text=blue!70, fill=white, inner sep=1pt, anchor=south] at (0,1.15) {$F_1$};
		\node[text=blue!70, fill=white, inner sep=1pt, anchor=south] at (0,1.65) {$F_2$};
		\node[text=blue!80, fill=white, inner sep=1pt, anchor=south] at (0,2.15) {$F_3 = F_k$};
		
		\newcommand{\wtileLL}[2]{%
			\draw[wtile] (#1,#2) rectangle (#1+0.5,#2+0.5);
			\node[red!80] at (#1+0.25,#2+0.25) {$w$};
		}
		
		\wtileLL{-2.5}{-1.5}
		\wtileLL{-1.5}{-0.5}
		\wtileLL{0.0}{0.0}
		\wtileLL{1.0}{-0.5}
		\wtileLL{1.5}{1.0}
		\wtileLL{-2.0}{0.5}
		
		\begin{scope}[shift={(4.5,2.5)}]
			\draw[shiftset] (-3,-2) rectangle (3,2);
			\node[text=green!60!black, fill=white, inner sep=1pt, anchor=south] at (0,2.15) {$F_k g_1$};
			\wtileLL{-2.5}{0.5}
			\wtileLL{-0.5}{-0.5}
			\wtileLL{1.0}{0.5}
			\wtileLL{2.0}{-0.5}
		\end{scope}
		
		\begin{scope}[shift={(-5,2.5)}]
			\draw[shiftset] (-3,-2) rectangle (3,2);
			\node[text=green!60!black, fill=white, inner sep=1pt, anchor=south] at (0,2.15) {$F_k g_2$};
			\wtileLL{-1.5}{-0.5}
			\wtileLL{0.0}{0.5}
			\wtileLL{1.5}{-0.5}
			\wtileLL{2.5}{0.5}
		\end{scope}
		
		\begin{scope}[shift={(5,-3.5)}]
			\draw[shiftset] (-3,-2) rectangle (3,2);
			\node[text=green!60!black, fill=white, inner sep=1pt, anchor=south] at (0,2.15) {$F_k g_3$};
			\wtileLL{-2.5}{-1.5}
			\wtileLL{-1.0}{0.5}
			\wtileLL{1.5}{-0.5}
			\wtileLL{2.5}{0.5}
		\end{scope}
		
	\end{tikzpicture}
\caption{In the above example we have \(|F_k|=96\), 
	 \(\m(x,F_k)([w])=1/16\) and \(\m(x,F_k g_i)([w])=1/24\) for each
	 \(i=1,2,3\). These numbers are $\eps$-close for every $\eps>1/48$.}
	
\end{figure}

\begin{proof}
	Fix $\mu\in \M_G(x)$. We use the construction from the proof
	of~\cite[Lemma~22]{Straszak}. It gives the following approximation scheme:
	for every $\eta>0$ there are $p=p(\eta)\in\N$ and tuples
	\[
		\bigl(g_1^{(j,\eta)},\dots,g_p^{(j,\eta)}\bigr)\in G^p,
		\qquad j\in\N,
	\]
	such that
	\[
		F_j g_{i_1}^{(j,\eta)}
		\cap
		F_j g_{i_2}^{(j,\eta)}
		=
		\emptyset
		\quad\text{whenever } i_1\neq i_2,
	\]
	and, if
	\[
		\F^\eta
		=
		\bigl(
		F_j g_1^{(j,\eta)}
		\sqcup\cdots\sqcup
		F_j g_p^{(j,\eta)}
		\bigr)_{j\in\N},
	\]
	then
	\begin{equation}\label{distjeestmaly}
		\dist\bigl(\mu,\hat\omega_{\F^\eta}(x)\bigr)<\eta .
	\end{equation}
	Here
	\[
		\dist\bigl(\mu,\hat\omega_{\F^\eta}(x)\bigr)
		=
		\inf\{\di(\mu,\nu):\nu\in\hat\omega_{\F^\eta}(x)\},
	\]
	where $\di$ denotes the Prokhorov metric. Since each $\F^\eta$ is obtained
	from $\F$ by taking a fixed finite number of disjoint right translates at
	each level, it is again a F{\o}lner sequence.

	We claim that every measure arising as a weak$^*$ accumulation point along
	one of these auxiliary F{\o}lner sequences is the same. Let
	\[
		\F^{(1)}
		=
		\bigl(
		F_j a_1^{(j)}
		\sqcup\cdots\sqcup
		F_j a_p^{(j)}
		\bigr)_{j\in\N}
	\]
	and
	\[
		\F^{(2)}
		=
		\bigl(
		F_j b_1^{(j)}
		\sqcup\cdots\sqcup
		F_j b_q^{(j)}
		\bigr)_{j\in\N}
	\]
	be two F{\o}lner sequences of this form, with all unions disjoint, and let
	\[
		\nu_1\in\hat\omega_{\F^{(1)}}(x),
		\qquad
		\nu_2\in\hat\omega_{\F^{(2)}}(x).
	\]
	Thus $x$ quasi-generates $\nu_i$ along $\F^{(i)}$, for $i=1,2$.

	Fix a cylinder $[w]\subset \alf^G$ and $\eps>0$. By assumption, there exists
	$K\in\N$ such that the numbers
	\[
		\m(x,F_k g)([w]),
		\qquad g\in G,\ k\ge K,
	\]
	are pairwise $\eps$-close.

	For $r\ge K$, the disjointness of the union defining $\F^{(1)}_r$ gives
	\[
		\m(x,\F^{(1)}_r)([w])
		=
		\frac{1}{p}
		\sum_{i=1}^p
		\m(x,F_r a_i^{(r)})([w]).
	\]
	Similarly, for $s\ge K$,
	\[
		\m(x,\F^{(2)}_s)([w])
		=
		\frac{1}{q}
		\sum_{i=1}^q
		\m(x,F_s b_i^{(s)})([w]).
	\]
	All terms appearing in these two averages are pairwise $\eps$-close.
	Consequently, for all $r,s\ge K$,
	\[
		\Bigl|
		\m(x,\F^{(1)}_r)([w])
		-
		\m(x,\F^{(2)}_s)([w])
		\Bigr|
		<\eps .
	\]

	Choose subsequences $(r_\ell)_{\ell\in\N}$ and $(s_\ell)_{\ell\in\N}$ such that
	\[
		\m(x,\F^{(1)}_{r_\ell})
		\xrightarrow[\ell\to\infty]{w^*}
		\nu_1
		\quad\text{and}\quad
		\m(x,\F^{(2)}_{s_\ell})
		\xrightarrow[\ell\to\infty]{w^*}
		\nu_2 .
	\]
	Since $[w]$ is clopen, passing to the limit yields
	\[
		|\nu_1([w])-\nu_2([w])|\le \eps .
	\]
	As $\eps>0$ was arbitrary, we get
	\[
		\nu_1([w])=\nu_2([w]).
	\]
	Cylinders form a countable clopen basis for $\alf^G$ and determine Borel
	probability measures. Hence $\nu_1=\nu_2$.

	It follows that every set $\hat\omega_{\F^\eta}(x)$ appearing in the
	approximation above consists of the same single measure; denote it by $\nu$.
	From~\eqref{distjeestmaly}, for every $\eta>0$ we have
	\[
		\di(\mu,\nu)<2\eta .
	\]
	Thus $\mu=\nu$. Since $\mu\in\M_G(x)$ was arbitrary, $\M_G(x)$ is a
	singleton. Therefore $x$ is uniquely ergodic.
\end{proof}

We recall that a Cortez–Petite Følner sequence induces tilings of the group by translates of Følner sets, which we call \emph{principal copies} (see Figure~\ref{fig:principal}, Definition~\ref{df:principalcopy}).

\begin{figure}[h]
    \centering
    \begin{tikzpicture}[font=\small, scale=0.7]
        
        \tikzset{
            folner/.style={draw=blue!70, line width=1pt},
            shiftset/.style={draw=green!60, line width=1pt},
            shiftset_princ/.style={draw=blue!60, line width=1pt},
            shiftset_bad/.style={draw=red!60, line width=1pt},
        }
        
        \draw[step=0.5cm, gray!20, thin] (-8,-5.5) grid (9,5.5);
        \draw[step=2cm, black!70, thin] (-8,-5.5) grid (9,5.5);

        \begin{scope}[shift={(0,0)}]
            \fill[pattern=north east lines,pattern color=blue!35] (0,0) rectangle (2,2);
            \draw[shiftset_princ] (0,0) rectangle (2,2);
            \node[text=blue!60!black, fill=white, inner sep=1pt, anchor=south] at (0,2.15) {$F_1 e$};
        \end{scope}

        \begin{scope}[shift={(4,2)}]
            \fill[green!18] (0,0) rectangle (2,2);
            \draw[shiftset] (0,0) rectangle (2,2);
            \node[text=green!60!black, fill=white, inner sep=1pt, anchor=south] at (0,2.15) {$F_1 h_1$};
        \end{scope}
        \begin{scope}[shift={(-4,2)}]
            \fill[green!18] (0,0) rectangle (2,2);
            \draw[shiftset] (0,0) rectangle (2,2);
            \node[text=green!60!black, fill=white, inner sep=1pt, anchor=south] at (0,2.15) {$F_1 h_2$};
        \end{scope}
        \begin{scope}[shift={(4,-4)}]
            \fill[green!18] (0,0) rectangle (2,2);
            \draw[shiftset] (0,0) rectangle (2,2);
            \node[text=green!60!black, fill=white, inner sep=1pt, anchor=south] at (0,2.15) {$F_1 h_3$};
        \end{scope}

        \begin{scope}[shift={(-3,-4)}]
            \fill[pattern=dots,pattern color=red!55] (0,0) rectangle (2,2);
            \draw[shiftset_bad] (0,0) rectangle (2,2);
            \node[text=red!60!black, fill=white, inner sep=1pt, anchor=south] at (0,2.15) {$F_1 g_1$};
        \end{scope}
        \begin{scope}[shift={(3,-1)}]
            \fill[pattern=dots,pattern color=red!55] (0,0) rectangle (2,2);
            \draw[shiftset_bad] (0,0) rectangle (2,2);
            \node[text=red!60!black, fill=white, inner sep=1pt, anchor=south] at (0,2.15) {$F_1 g_2$};
        \end{scope}

        \node[align=center,scale=0.85] at (0,-6.5) {%
            \tikz{\draw[line width=2pt,blue!60] (0,0) rectangle (0.5,0.22);\fill[pattern=north east lines,pattern color=blue!35] (0,0) rectangle (0.5,0.22);}~principal copy ($F_1 e$)
            \quad
            \tikz{\draw[green!60] (0,0) rectangle (0.5,0.22);\fill[green!18] (0,0) rectangle (0.5,0.22);}~good principal copy
            \quad
            \tikz{\draw[red!60] (0,0) rectangle (0.5,0.22);\fill[pattern=dots,pattern color=red!55] (0,0) rectangle (0.5,0.22);}~bad copy
        };
    \end{tikzpicture}
\caption{Principal copies of $F_1$ are translates $F_1h$ with $h\in H_1$
(blue and green). The red translates are not principal copies.}
\label{fig:principal}
\end{figure}

\begin{defn}\label{df:principalcopy}
	Let $\F=(F_j)_{j\in\N}$ be a Cortez–Petite Følner sequence (see Definition~\ref{Cortez-Petite}) in $G$ with associated sequence of normal subgroups $(H_j)_{j\in\N}$.  
	For $k\in\N$, a \emph{principal copy} of $F_k$ is any set of the form $F_k g$ with $g\in H_k$.
\end{defn}

From now on, we assume in addition that $G$ is residually finite. The next
lemma says that, for a Cortez--Petite F{\o}lner sequence, control of empirical
measures on principal copies of one set $F_K$ implies asymptotic control of
empirical measures on arbitrary copies of sufficiently large sets $F_n$.

\begin{lem}\label{lemma:1}
	Let $\F=(F_j)_{j\in\N}$ be a Cortez--Petite F{\o}lner sequence in $G$,
	and let $(H_j)_{j\in\N}$ be the associated sequence of normal subgroups.
	Fix $x\in\alf^G$, $K\in\N$, $w\in\alf^*$, and $\eps>0$. Suppose that all
	numbers in
	\[
		\bigl\{\m(x,F_K\gamma)([w]) : \gamma\in H_K\bigr\}
	\]
	are pairwise $\eps$-close. Then for every $\eta>0$ there exists $N\in\N$
	such that all numbers in
	\[
		\bigl\{\m(x,F_n g)([w]) : g\in G,\ n\ge N\bigr\}
	\]
	are pairwise $2(\eps+2\eta)$-close.
\end{lem}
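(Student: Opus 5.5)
The plan is to show that every set $F_ng$ with $n$ large is, up to a negligible proportion of its elements, a disjoint union of principal copies $F_K\gamma$ with $\gamma\in H_K$. The empirical measure $\m(x,F_ng)([w])$ is then approximately a convex combination of the numbers $\m(x,F_K\gamma)([w])$, and these lie in an interval of length at most $\eps$. Two such convex combinations differ by at most $\eps$, and the errors from the leftover elements add the $\eta$-terms.

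\emph{Step 1: the tiling.} By iterating property (4) of Theorem~\ref{Cortez}, for every $n\ge K$ we get $F_n=\bigsqcup_{v\in V_n}F_Kv$ with $V_n\subset H_K$. Since $F_K$ is a fundamental domain of $G/H_K$ by (3), we also have $G=\bigsqcup_{h\in H_K}F_Kh$, so the principal copies tile $G$. The tile containing a given $t$ is determined by $[t]_K$. Because $H_K$ is normal, right-multiplying by $g$ maps the tiling to the tiling by the sets $F_Khg$, where $hg$ runs over the coset $H_Kg=gH_K$. This is not the principal tiling unless $g\in H_K$, so the translated pattern has to be compared with the principal one.

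\emph{Step 2: approximating $F_ng$ by principal copies.} I will not transport the tiling. Instead I will count directly in $F_ng$ the principal copies $F_Kh$ with $h\in H_K$ that are fully contained in $F_ng$. Let $L=F_K F_K^{-1}$, which is finite. If $t\in F_ng$ satisfies $Lt\subset F_ng$, then the principal tile $F_Kh\ni t$ is contained in $Lt$ (write $t=fh$; then $F_Kh=F_Kf^{-1}t$), hence in $F_ng$. By Lemma~\ref{Z}, together with the fact that $F_ng$ is $(L,\delta)$-invariant exactly when $F_n$ is (right translation preserves $|LF_n\triangle F_n|/|F_n|$), for $n\ge N$ at least $(1-\eta')|F_ng|$ points of $F_ng$ lie in fully contained principal tiles. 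Here $\eta'$ is chosen small in terms of $\eta$.

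\emph{Step 3: the estimate.} Let $A$ be the union of these contained tiles and $B=F_ng\setminus A$, so $|B|\le\eta'|F_n|$. Then $\m(x,F_ng)([w])=\frac{|A|}{|F_n|}\,c+\frac{1}{|F_n|}\#\{t\in B: tx\in[w]\}$, where $c$ is an average of values $\m(x,F_Kh)([w])$ with $h\in H_K$. Fix one reference value $m_0=\m(x,F_K)([w])$. The hypothesis gives $|c-m_0|\le\eps$, so $|\m(x,F_ng)([w])-m_0|\le\eps+2\eta'$. Taking $\eta'=\eta$, the triangle inequality shows that any two values differ by at most $2(\eps+2\eta)$, which is the claim. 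Strictness of the inequalities can be arranged by taking $\eta'<\eta$.

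The main obstacle is Step 2. One must make sure that non-principal translates $F_ng$ with $g\notin H_n$ are still almost exhausted by genuine principal copies $F_Kh$ with $h\in H_K$, not merely by translates $F_Khg$. The tile-containment argument through $L=F_KF_K^{-1}$ and Lemma~\ref{Z} handles this without using normality.
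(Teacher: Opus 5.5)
Your argument is correct and is essentially the paper's proof. The paper covers $F_ng$ from outside by all principal copies $F_K\gamma$ that meet it, and bounds the excess by $\eta|F_ng|$ using $L=F_KF_K^{-1}$ and Lemma~\ref{Z}; you instead take the union of principal copies contained in $F_ng$, which is the same boundary estimate seen from inside. One side remark in your Step~1 is wrong but unused: the right-translated tiling can coincide with the principal one for $g\notin H_K$ (e.g.\ $G=\Z/2\times\Z$, $H_K=\{0\}\times 2\Z$, $F_K=\{0,1\}^2$, $g=(1,0)$).
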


\begin{proof}
	Fix $\eta>0$ and put
	\[
		L=F_KF_K^{-1}.
	\]
	Choose $N\in\N$ such that, for every $n\ge N$, the set $F_n$ is
	\[
		\left(L,\frac{\eta}{|F_K||L|}\right)\text{-invariant}.
	\]
	This invariance property is preserved under right translations: for every
	$g\in G$, the set $F_n g$ is also
	\[
		\left(L,\frac{\eta}{|F_K||L|}\right)\text{-invariant}.
	\]

	Fix $n\ge N$ and $g\in G$, and write
	\[
		E=F_n g .
	\]
	Since $F_K$ is a fundamental domain for $G/H_K$, the family
	\[
		\{F_K\gamma:\gamma\in H_K\}
	\]
	is a partition of $G$. Let
	\[
		\Gamma=\{\gamma\in H_K: F_K\gamma\cap E\neq\emptyset\}
	\]
	and set
	\[
		U=\bigsqcup_{\gamma\in\Gamma}F_K\gamma .
	\]
	Then $E\subset U$.

	We estimate the part of $U$ which lies outside $E$. Suppose that a tile
	$F_K\gamma$ intersects $E$ but is not contained in $E$. Choose
	$s\in F_K\gamma\cap E$ and $t\in F_K\gamma\setminus E$. Then
	\[
		t\in F_KF_K^{-1}s=Ls,
	\]
	and therefore $Ls\nsubseteq E$. Since distinct tiles are disjoint, the number
	of such bad tiles is bounded by
	\[
		|\{s\in E:Ls\nsubseteq E\}|.
	\]
	By Lemma~\ref{Z}, applied with $\eps=\eta/|F_K|$, we have
	\[
		|\{s\in E:Ls\nsubseteq E\}|
		\le
		\frac{\eta}{|F_K|}|E|.
	\]
	Each bad tile has cardinality $|F_K|$, hence
	\[
		|U\setminus E|\le \eta |E|.
	\]

	It follows that
	\[
		\bigl|
		\m(x,E)([w])-\m(x,U)([w])
		\bigr|
		\le 2\eta .
	\]
	Equivalently,
	\[
		\bigl|
		\m(x,F_n g)([w])-\m(x,U)([w])
		\bigr|
		\le 2\eta .
	\]

	On the other hand, since $U$ is a disjoint union of principal copies of
	$F_K$, we have
	\[
		\m(x,U)([w])
		=
		\frac{1}{|\Gamma|}
		\sum_{\gamma\in\Gamma}
		\m(x,F_K\gamma)([w]).
	\]
	Since all principal-copy frequencies are pairwise $\eps$-close, this average
	is $\eps$-close to every number of the form
	\[
		\m(x,F_K\gamma)([w]),\qquad \gamma\in H_K.
	\]
	Combining this with the previous estimate, we obtain that for every
	$n\ge N$, every $g\in G$, and every $\gamma\in H_K$,
	\[
		\bigl|
		\m(x,F_n g)([w])
		-
		\m(x,F_K\gamma)([w])
		\bigr|
		<
		\eps+2\eta .
	\]

	Now take arbitrary $n_1,n_2\ge N$ and $g_1,g_2\in G$. Choosing any fixed
	$\gamma\in H_K$ and applying the last estimate twice, we get
	\[
		\begin{aligned}
		&
		\bigl|
		\m(x,F_{n_1}g_1)([w])
		-
		\m(x,F_{n_2}g_2)([w])
		\bigr|
		\\
		&\qquad\le
		\bigl|
		\m(x,F_{n_1}g_1)([w])
		-
		\m(x,F_K\gamma)([w])
		\bigr|
		+
		\bigl|
		\m(x,F_K\gamma)([w])
		-
		\m(x,F_{n_2}g_2)([w])
		\bigr|
		\\
		&\qquad<
		2(\eps+2\eta).
		\end{aligned}
	\]
	This proves the claim.
\end{proof}

Lemma~\ref{lemma:1} gives the following criterion for unique ergodicity.

\begin{cor}\label{cor:ue}
	Let $x\in\alf^G$, and let $\F=(F_j)_{j\in\N}$ be a tempered
	Cortez--Petite F{\o}lner sequence in $G$ with associated sequence
	$(H_j)_{j\in\N}$. Suppose that for every $\eps>0$ and every $w\in\alf^*$
	there exists $K\in\N$ such that the numbers
	\[
		\bigl\{\m(x,F_K g)([w]) : g\in H_K\bigr\}
	\]
	are pairwise $\eps$-close. Then $x$ is uniquely ergodic.
\end{cor}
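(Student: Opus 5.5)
The plan is to reduce the corollary to Lemma~\ref{lemma:2} by way of Lemma~\ref{lemma:1}. Since $\F$ is tempered by hypothesis, Lemma~\ref{lemma:2} applies. It therefore suffices to check its hypothesis: for every $w\in\alf^*$ and every $\eps'>0$ there should be $N\in\N$ such that all numbers $\m(x,F_ng)([w])$ with $g\in G$ and $n\ge N$ are pairwise $\eps'$-close.

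Fix $w\in\alf^*$ and $\eps'>0$. Apply the assumption of the corollary with $\eps=\eps'/4$ to obtain $K\in\N$ such that the numbers $\m(x,F_K\gamma)([w])$, $\gamma\in H_K$, are pairwise $\eps'/4$-close. This is exactly the hypothesis of Lemma~\ref{lemma:1} for this $K$, $w$ and $\eps=\eps'/4$. We then invoke Lemma~\ref{lemma:1} with $\eta=\eps'/16$. It yields $N$ such that the numbers $\m(x,F_ng)([w])$ with $g\in G$ and $n\ge N$ are pairwise $2(\eps'/4+\eps'/8)=3\eps'/4$-close. In particular they are pairwise $\eps'$-close.

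Since $w$ and $\eps'$ were arbitrary, the hypothesis of Lemma~\ref{lemma:2} holds with its $K$ taken to be $N$, and Lemma~\ref{lemma:2} gives that $x$ is uniquely ergodic.

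There is no real obstacle, only bookkeeping. One point is that Lemma~\ref{lemma:1} is stated for a Cortez--Petite sequence with no temperedness requirement, while Lemma~\ref{lemma:2} needs temperedness; the tempered assumption in the corollary covers this. The other is choosing the constants so that $2(\eps+2\eta)\le\eps'$.
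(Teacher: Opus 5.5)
Your proposal is correct and matches the paper's proof: fix $w$ and the target tolerance, use the hypothesis to obtain $K$, apply Lemma~\ref{lemma:1} to get closeness over all translates $F_ng$ with $n\ge N$, and conclude with Lemma~\ref{lemma:2} (temperedness being needed only there). The only difference is the constants: the paper takes $\eps=\delta/4$ and $\eta=\delta/8$, so $2(\eps+2\eta)=\delta$ exactly, whereas your $\eta=\eps'/16$ gives $3\eps'/4$; this makes no difference to the argument.
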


\begin{proof}
	Fix $w\in\alf^*$ and $\delta>0$. By assumption, there exists $K\in\N$ such
	that the numbers
	\[
		\bigl\{\m(x,F_K g)([w]) : g\in H_K\bigr\}
	\]
	are pairwise $\delta/4$-close. Applying Lemma~\ref{lemma:1} with
	$\eps=\delta/4$ and $\eta=\delta/8$, we obtain $N\in\N$ such that all
	numbers in
	\[
		\bigl\{\m(x,F_n g)([w]) : g\in G,\ n\ge N\bigr\}
	\]
	are pairwise $\delta$-close.

	Since $w\in\alf^*$ and $\delta>0$ were arbitrary, the hypothesis of
	Lemma~\ref{lemma:2} is satisfied. Therefore $x$ is uniquely ergodic.
\end{proof}
Lemma~\ref{ostatnilemat} shows that if a point is generic for an ergodic
measure, then the empirical measures taken over principal copies of
F{\o}lner sets are uniformly close to that measure. The proof uses Oxtoby's
criterion for ergodicity of a generic point; we recall the version needed below.

Let
\[
	Q=\{x\in X : x \text{ is generic for some } \mu\in\M_G(X)
	\text{ with respect to } \F\}.
\]
By the pointwise ergodic theorem for tempered F{\o}lner sequences, we have
\[
	\mu(Q)=1
	\qquad\text{for every } \mu\in\M_G^e(X).
\]

For $k\in\N$, $x\in X$, and $f\in\mathcal C(X)$, define
\[
	A(f,k)(x)
	=
	\frac{1}{|F_k|}
	\sum_{g\in F_k} f(gx).
\]
If $y\in Q$, then $y$ is generic with respect to $\F$, and hence the limit
\[
	f^*(y)
	=
	\lim_{k\to\infty} A(f,k)(y)
\]
exists for every $f\in\mathcal C(X)$.

\begin{thm}[Oxtoby criterion for ergodicity in amenable groups, {\cite{Oxtoby52,LM}}]\label{thm:Oxtoby}
	Suppose that $x\in X$ is generic for some $\mu\in \M_G(X)$ with respect to
	a F{\o}lner sequence $\F=(F_n)_{n\in\N}$. Then the following are equivalent:
	\begin{enumerate}
		\item $\mu$ is ergodic;
		\item for every $f\in\mathcal C(X)$ and every $\alpha>0$,
		\[
			\lim_{k\to\infty}\;\limsup_{n\to\infty}
			\frac{1}{|F_n|}
			\bigl|
			\{g\in F_n : |A(f,k)(gx)-f^*(x)|>\alpha\}
			\bigr|
			=0.
		\]
	\end{enumerate}
\end{thm}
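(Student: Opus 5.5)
The plan is to prove both implications through the mean ergodic theorem for amenable groups. For $f\in L^2(\mu)$, the averages $A(f,k)$ converge in $L^2(\mu)$ to the orthogonal projection $Pf$ of $f$ onto the $G$-invariant functions. This is von Neumann's argument: invariant vectors are preserved by the averages, while coboundaries $h-h\circ g$ are killed in the limit because $|gF_k\triangle F_k|/|F_k|\to 0$. We may need to check that the side on which the F{\o}lner condition is stated matches the way $A(f,k)$ composes the action; if not, pass to $F_k^{-1}$ or adjust notation. A second tool links the frequencies along $x$ to $\mu$. For fixed $k$, the function $y\mapsto A(f,k)(y)$ is continuous, because it is a finite average of continuous functions. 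Hence genericity of $x$ turns averages of continuous functions of $A(f,k)(gx)$ over $g\in F_n$ into integrals against $\mu$.

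For (1)$\Rightarrow$(2), assume $\mu$ is ergodic. Then $Pf=\int f\,d\mu$, and $f^*(x)=\lim_k A(f,k)(x)=\int f\,d\mu$ by genericity. Set $\phi_k(y)=|A(f,k)(y)-\int f\,d\mu|$, which is continuous. The set $C_k=\{\phi_k\ge\alpha\}$ is closed and contains $\{\phi_k>\alpha\}$. The frequencies in the criterion are $\m(x,F_n)$-measures of $\{\phi_k>\alpha\}$, so by the portmanteau theorem (upper semicontinuity on closed sets)
\[
\limsup_{n\to\infty}\frac{1}{|F_n|}\bigl|\{g\in F_n:\phi_k(gx)>\alpha\}\bigr|\le \mu(C_k)\le \frac{\|A(f,k)-\textstyle\int f\,d\mu\|_{L^2(\mu)}^2}{\alpha^2}.
\]
The last step is Chebyshev's inequality, and the right-hand side tends to $0$ as $k\to\infty$ by the mean ergodic theorem.

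For (2)$\Rightarrow$(1), fix $f\in\mathcal C(X)$ and put $c=f^*(x)$. Since $|A(f,k)-c|$ is continuous, genericity gives
\[
\int|A(f,k)-c|\,d\mu=\lim_{n\to\infty}\frac{1}{|F_n|}\sum_{g\in F_n}|A(f,k)(gx)-c|.
\]
Split the sum over $g\in F_n$ into the set where $|A(f,k)(gx)-c|\le\alpha$ and the set where it exceeds $\alpha$. The first part contributes at most $\alpha$. The second contributes at most $2\|f\|_\infty$ times the density of the bad set, whose $\limsup_n$ tends to $0$ as $k\to\infty$ by (2). Therefore $A(f,k)\to c$ in $L^1(\mu)$. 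Since $A(f,k)\to Pf$ in $L^2(\mu)$, and hence in $L^1(\mu)$, we get $Pf=c$ almost everywhere, so $Pf$ is constant for every continuous $f$. Now $P$ is $L^2$-continuous and $\mathcal C(X)$ is dense in $L^2(\mu)$, so $Ph$ is constant for every $h\in L^2(\mu)$. For an invariant $h$ we have $Ph=h$, so every invariant $L^2$ function (in particular every invariant indicator function) is constant, and $\mu$ is ergodic.

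The main obstacle is the mean ergodic theorem along the given F{\o}lner sequence, specifically making sure the left/right conventions of the F{\o}lner condition agree with how $A(f,k)$ composes the action. The genericity and portmanteau steps are routine.
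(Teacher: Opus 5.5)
Your proof is correct. The paper gives no proof of this statement and only quotes it from the literature (Oxtoby's criterion and its amenable-group version). Your argument is the classical one. Genericity of $x$ turns frequencies and averages along $x$ of continuous functions built from $A(f,k)$ into $\mu$-integrals. The mean ergodic theorem identifies the $L^2(\mu)$-limit of $A(f,k)$ with the projection $Pf$ onto $G$-invariant functions. Hence (2) holds exactly when $Pf$ is constant for every $f\in\mathcal C(X)$, which is ergodicity.

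The point you single out as the main obstacle is in fact harmless. Put $U_gh=h\circ g$ and $A_k=\frac{1}{|F_k|}\sum_{g\in F_k}U_g$ on $L^2(\mu)$, so that $A_kf=A(f,k)$ for continuous $f$. Then $U_gU_{g_0}h(y)=h(g_0gy)$, i.e.\ $U_gU_{g_0}=U_{g_0g}$, and therefore
\[
A_k(h-U_{g_0}h)=\frac{1}{|F_k|}\Bigl(\sum_{g\in F_k}U_gh-\sum_{g\in g_0F_k}U_gh\Bigr),
\qquad
\|A_k(h-U_{g_0}h)\|_{L^2(\mu)}\le\frac{|g_0F_k\triangle F_k|}{|F_k|}\,\|h\|_{L^2(\mu)}.
\]
The right-hand side tends to $0$ by exactly the left F{\o}lner condition $|gF_n\triangle F_n|/|F_n|\to0$ used in the paper. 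This is the same convention that makes accumulation points of $\m(x,F_n)$ invariant, since $g_*\m(x,F_n)=\m(x,gF_n)$. So you do not need to pass to $F_k^{-1}$, and temperedness plays no role.

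Your route also gives something extra. For ergodic $\mu$, applying genericity to the continuous function $(A(f,k)-\int f\,d\mu)^2$ yields directly
\[
\lim_{n\to\infty}\frac{1}{|F_n|}\sum_{g\in F_n}\bigl(A(f,k)(gx)-f^*(x)\bigr)^2=\Bigl\|A(f,k)-\int_X f\,d\mu\Bigr\|_{L^2(\mu)}^2\xrightarrow[k\to\infty]{}0 .
\]
This is precisely the squared form \eqref{eq:oxtoby-square} that the paper extracts from the criterion, with only a brief justification, in the proof of Lemma~\ref{ostatnilemat}.
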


The following lemma is similar in spirit to \cite[Theorem 5.5 and Theorem 5.8]{Ergodicblockdecomp}.
\begin{lem}\label{ostatnilemat}
	Let $x$ be an ergodic point generic for a measure $\mu$ with respect to a tempered Cortez--Petite Følner sequence $\F=(F_n)_{n\in\N}$.  
	Fix $w_1,w_2,\ldots,w_p\in\alf^*$ and $\eps>0$. Then for all sufficiently large $M\in\N$ the following holds:  
	
	for every $m\geq M$ there exists a set $\mathcal{S}$ of at least $(1-\eps)|F_m|/|F_M|$ principal copies of $F_M$ inside $F_m$ such that, for each word $w_i$ with $1\leq i\leq p$ and for every $S\in\mathcal{S}$, the values $\m(x,S)([w_i])$ and $\mu([w_i])$ are $\eps$-close.
\end{lem}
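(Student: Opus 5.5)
We apply the Oxtoby criterion, Theorem~\ref{thm:Oxtoby}, to the indicator functions $f_i=\mathbf 1_{[w_i]}$. These are continuous because cylinders are clopen. Since $x$ is generic for $\mu$, we have $f_i^*(x)=\mu([w_i])$. Moreover, $A(f_i,k)(gx)=\m(x,F_kg)([w_i])$, because $\frac{1}{|F_k|}\sum_{h\in F_k}\mathbf 1_{[w_i]}(hgx)$ is exactly the frequency of $w_i$ along $F_kg$. So for a translate $F_kg$, being $\eps$-good for $w_i$ means exactly that $g$ avoids the exceptional set
\[
B_i(k)=\{g\in G: |A(f_i,k)(gx)-\mu([w_i])|\ge\eps\}.
\]
Applying the criterion with $\alpha=\eps/2$ for each $i$ and summing over $i\le p$, we obtain $M_0$ such that for every $M\ge M_0$
\[
\limsup_{n\to\infty}\frac{1}{|F_n|}\Bigl|F_n\cap\bigcup_{i\le p}B_i(M)\Bigr|<\eps^2\ \text{(say)}.
\]

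The difficulty is that Oxtoby controls the proportion of \emph{all} positions $g\in F_n$, whereas the lemma concerns principal copies $F_Mh$ with $h\in H_M$ inside $F_m$. By property (4) of Theorem~\ref{Cortez}, iterated, $F_m=\bigsqcup_{h\in F_m\cap H_M}F_Mh$ for $m\ge M$. Hence the principal copies inside $F_m$ are indexed by $F_m\cap H_M$, a set of size $|F_m|/|F_M|$. So we must bound the proportion of bad indices $h\in F_m\cap H_M$, not of bad $g\in F_m$.

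For this we would use the $H_M$-invariance of the whole setting. Consider the coset decomposition: the sets $F_m\cap H_Mc$, for $c\in F_M$, each have size $|F_m|/|F_M|$. The trouble is that Oxtoby's density estimate could concentrate all the bad positions in a single coset. Our first remedy is to perturb. For $c\in F_M$ and $h\in H_M$, the translate $F_Mch$ differs from $F_Mh$ by $F_Mc\,\triangle\, F_M$ shifted, and this is not small.

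Instead we would pass to the Følner sets and exploit ergodicity differently. Apply the criterion with $k=M'$ much larger than $M$ and use Lemma~\ref{lemma:1}-type tiling: a good translate $F_{M'}g$ (frequency close to $\mu$) with $M'\gg M$ is, up to an $\eta$-fraction of boundary, a union of principal copies of $F_M$, since $F_{M'}g$ is tiled by $H_M$-copies of $F_M$ exactly as in the proof of Lemma~\ref{lemma:1}. This alone gives only the average closeness. So we would instead argue by contradiction with ergodicity, using the finite ergodic decomposition of $\mu$ under $H_M$. The measure $\mu$ decomposes into at most $[G:H_M]$ $H_M$-ergodic components $\mu_c$, which are permuted by $G$, and $\mu$ is their average. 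The frequencies along principal copies inside $F_m$ see $x$ through the $H_M$-action with Følner sequence $(F_m\cap H_M)$. Averaging the estimate over the $|F_M|$ cosets, and using that the coset $c$ simply corresponds to the point $cx$, converts the Oxtoby density bound for $x$ into the same bound for each coset, up to a factor of $|F_M|$. The main obstacle is precisely this concentration issue. We expect to resolve it by choosing, for fixed $M$, the averaging scale $M'\gg M$ in Oxtoby, so that badness is spread uniformly across cosets. Markov's inequality then gives at least $(1-\eps)|F_m|/|F_M|$ good principal copies for all large $m$, and finitely many small $m\ge M$ are handled by enlarging $M$.
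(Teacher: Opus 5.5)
There is a genuine gap. You correctly identify the difficulty: at scale $M$, the Oxtoby criterion bounds the density of bad positions $g\in F_m$, while the principal copies $F_Mh\subset F_m$ are indexed by the single coset slice $F_m\cap H_M$, where the bad positions may concentrate. But you never resolve it. Transferring the estimate to each coset ``up to a factor of $|F_M|$'' is useless, because the criterion gives no rate: nothing says the bad density at scale $M$ is $o(1/|F_M|)$. Passing to $M'\gg M$ goes the wrong way, as you observe yourself. A good translate $F_{M'}g$ only says that the \emph{average} of the frequencies over the copies of $F_M$ inside it is close to $\mu([w_i])$, and nothing makes ``badness spread uniformly across cosets.'' The appeal to the $H_M$-ergodic decomposition needs $x$ to be generic along $(F_m\cap H_M)_m$, which is not assumed (compare the remark after Lemma~\ref{nietracimy}); even then it gives $\mu_{M,s}(\text{bad})\le |F_M|\,\mu(\text{bad})$, the same useless factor. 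Finally, the statement must hold for \emph{every} $m\ge M$. ``Enlarging $M$'' to handle small $m$ is circular, since each new $M$ has its own small $m$.

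The missing idea is to reverse the scales and to pick the best coset rather than bound the worst one.

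First fix a small scale $k$ from the Oxtoby criterion. The paper uses $\frac1{|F_n|}\sum_{a\in F_n}\sum_i\bigl(A(\varphi_i,k)(ax)-\mu([w_i])\bigr)^2<\eta$ for all $n\ge M_0$, where $M_0$ is chosen after $k$ and also makes every $F_n$, $n\ge M_0$, $(F_kF_k^{-1},\eta/(|F_k||F_kF_k^{-1}|))$-invariant.

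Next, write $F_m=\bigsqcup_{f\in F_k}f(H_k\cap F_m)$ and average: some $f\in F_k$ satisfies the same bound on its slice. Since $H_k$ is normal, $\{F_kf\gamma:\gamma\in H_k\}$ is a partition of $G$. Moreover $A(\varphi_i,k)(f\gamma x)=\m(x,F_kf\gamma)([w_i])$, so by Chebyshev most cells of this shifted tiling that lie inside $F_m$ are good.

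Only now take $M\ge M_0$. By Markov, all but an $\eps$-fraction of the principal copies $F_Mh\subset F_m$ are covered, up to a fraction $\theta$, by good cells. Cells meeting $F_Mh$ without lying inside it cover at most $\eta|F_M|$ points. A disjoint union of good cells has frequency within $\tau$ of $\mu([w_i])$, so these copies are $(\tau+\theta+\eta)$-good.

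This is why the scale order matters:
\begin{itemize}
\item Averaging is exactly what you need here, since small good cells build large good copies.
\item The coset concentration is harmless, because only one good coset at scale $k$ is required.
\item Since $k$ and $M_0$ do not depend on $M$ or $m$, the conclusion holds uniformly for all $m\ge M\ge M_0$.
\end{itemize}
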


\begin{proof}
	It is enough to prove the statement for a smaller value of $\eps$, so we
	may assume that $0<\eps<1$.

	For $i=1,\dots,p$, let
	\[
		\varphi_i=\mathbf 1_{[w_i]}.
	\]
	Since each cylinder $[w_i]$ is clopen, the functions $\varphi_i$ are
	continuous. Since $x$ is generic for $\mu$ with respect to $\F$, we have
	\[
		\varphi_i^*(x)=\mu([w_i])
		\qquad\text{for } i=1,\dots,p.
	\]

	Set
	\[
		\tau=\frac{\eps}{4}
		\qquad\text{and}\qquad
		\theta=\frac{\eps}{4}.
	\]
	Choose $\eta>0$ so small that
	\begin{equation}\label{eq:eta-choice}
		\frac{\eta}{\tau^2}+\eta<\theta\eps
		\qquad\text{and}\qquad
		\tau+\theta+\eta<\eps .
	\end{equation}

	By Theorem~\ref{thm:Oxtoby}, and since the functions $\varphi_i$ are bounded,
	we may choose $k\in\N$ such that
	\begin{equation}\label{eq:oxtoby-square}
		\limsup_{n\to\infty}
		\frac{1}{|F_n|}
		\sum_{a\in F_n}\sum_{i=1}^p
		\bigl(A(\varphi_i,k)(a x)-\mu([w_i])\bigr)^2
		<\eta .
	\end{equation}
	Indeed, this follows from the Oxtoby criterion by first controlling the
	sets on which the deviations are larger than a small threshold, and then
	using boundedness.

	Put
	\[
		L=F_kF_k^{-1}.
	\]
	Choose $M_0\ge k$ such that for every $n\ge M_0$ we have
	\begin{equation}\label{eq:square-large-n}
		\frac{1}{|F_n|}
		\sum_{a\in F_n}\sum_{i=1}^p
		\bigl(A(\varphi_i,k)(a x)-\mu([w_i])\bigr)^2
		<\eta ,
	\end{equation}
	and such that $F_n$ is
	\[
		\left(L,\frac{\eta}{|F_k||L|}\right)\text{-invariant}.
	\]

	We show that every $M\ge M_0$ satisfies the conclusion of the lemma. Fix
	such an $M$ and let $m\ge M$. By the Cortez--Petite decomposition, iterated
	if necessary,
	\[
		F_m=\bigsqcup_{\gamma\in H_k\cap F_m}F_k\gamma.
	\]
	Equivalently,
	\[
		F_m=\bigsqcup_{f\in F_k} f(H_k\cap F_m).
	\]
	Using~\eqref{eq:square-large-n} and averaging over $f\in F_k$, we can choose
	$f\in F_k$ such that
	\begin{equation}\label{eq:good-f-square}
		\frac{|F_k|}{|F_m|}
		\sum_{\gamma\in H_k\cap F_m}\sum_{i=1}^p
		\bigl(A(\varphi_i,k)(f\gamma x)-\mu([w_i])\bigr)^2
		<\eta .
	\end{equation}

	Let $T\subset H_k\cap F_m$ be the set of all $\gamma$ such that
	\[
		\bigl|A(\varphi_i,k)(f\gamma x)-\mu([w_i])\bigr|< \tau
		\qquad\text{for every } i=1,\dots,p.
	\]
	By the Chebyshev--Markov inequality applied to~\eqref{eq:good-f-square},
	\[
		|T|
		\ge
		\left(1-\frac{\eta}{\tau^2}\right)\frac{|F_m|}{|F_k|}.
	\]

	For this fixed $f$, the family
	\[
		\{F_k f\gamma:\gamma\in H_k\}
	\]
	is a partition of $G$. Indeed, since $H_k$ is normal, this family is obtained
	from the partition $\{F_k\gamma:\gamma\in H_k\}$ by right multiplication by
	$f$, after reindexing.

	We now discard the cells which are not contained in $F_m$. Let
	\[
		T_{\mathrm{in}}
		=
		\{\gamma\in T:F_k f\gamma\subset F_m\}.
	\]
	We claim that
	\[
		|T_{\mathrm{in}}|
		\ge
		\left(1-\frac{\eta}{\tau^2}-\eta\right)\frac{|F_m|}{|F_k|}.
	\]
	Indeed, suppose that a cell $F_k f\gamma$ intersects $F_m$ but is not
	contained in $F_m$. Choose
	\[
		s\in F_k f\gamma\cap F_m
		\qquad\text{and}\qquad
		t\in F_k f\gamma\setminus F_m.
	\]
	Then
	\[
		t\in F_kF_k^{-1}s=Ls,
	\]
	and hence $Ls\nsubseteq F_m$. Since distinct cells are disjoint, the number
	of such boundary cells is at most
	\[
		|\{s\in F_m:Ls\nsubseteq F_m\}|.
	\]
	By Lemma~\ref{Z}, applied with $\eps=\eta/|F_k|$, this number is at most
	\[
		\frac{\eta}{|F_k|}|F_m|.
	\]
	The claim follows.

	Set
	\[
		Z=\bigsqcup_{\gamma\in T_{\mathrm{in}}}F_k f\gamma.
	\]
	Then $Z\subset F_m$ and
	\[
		|Z|
		\ge
		\left(1-\frac{\eta}{\tau^2}-\eta\right)|F_m|.
	\]

	Now decompose $F_m$ into principal copies of $F_M$:
	\[
		F_m=\bigsqcup_{h\in H_M\cap F_m}F_Mh.
	\]
	Call a principal copy $S=F_Mh$ \emph{good} if
	\[
		|S\cap Z|\ge (1-\theta)|S|.
	\]
	Since
	\[
		|F_m\setminus Z|
		\le
		\left(\frac{\eta}{\tau^2}+\eta\right)|F_m|,
	\]
	the number of principal copies which are not good is at most
	\[
		\frac{\eta/\tau^2+\eta}{\theta}\cdot\frac{|F_m|}{|F_M|}.
	\]
	By~\eqref{eq:eta-choice}, this is less than
	\[
		\eps\frac{|F_m|}{|F_M|}.
	\]
	Thus there are at least
	\[
		(1-\eps)\frac{|F_m|}{|F_M|}
	\]
	good principal copies of $F_M$ inside $F_m$.

	It remains to show that every good principal copy has the desired empirical
	frequencies. Fix a good copy
	\[
		S=F_Mh.
	\]
	Since $M\ge M_0$, the set $F_M$ is
	\[
		\left(L,\frac{\eta}{|F_k||L|}\right)\text{-invariant},
	\]
	and the same is true for its right translate $S$.

	Let
	\[
		Y_S
		=
		\bigsqcup\{F_k f\gamma:
		\gamma\in T_{\mathrm{in}},\ F_k f\gamma\subset S\}.
	\]
	The only cells from the partition $\{F_k f\gamma:\gamma\in H_k\}$ which can
	contribute to $S\cap Z$ but not to $Y_S$ are cells which intersect $S$
	without being contained in $S$. As above, the number of such boundary cells
	is at most
	\[
		|\{s\in S:Ls\nsubseteq S\}|
		\le
		\frac{\eta}{|F_k|}|S|.
	\]
	Hence their total size is at most $\eta |S|$. Since $S$ is good,
	\[
		|Y_S|
		\ge
		|S\cap Z|-\eta |S|
		\ge
		(1-\theta-\eta)|S|.
	\]

	Fix $i\in\{1,\dots,p\}$. For every cell $F_k f\gamma$ appearing in $Y_S$,
	we have $\gamma\in T$, and therefore
	\[
		\m(x,F_k f\gamma)([w_i])
		=
		A(\varphi_i,k)(f\gamma x)
	\]
	is $\tau$-close to $\mu([w_i])$. Since $Y_S$ is a disjoint union of such
	cells, we get
	\[
		\bigl|\m(x,Y_S)([w_i])-\mu([w_i])\bigr|\le\tau.
	\]
	Moreover,
	\[
		|S\setminus Y_S|\le(\theta+\eta)|S|.
	\]
	Removing a subset of relative size at most $\rho$ changes the frequency of a
	set by at most $\rho$. Hence
	\[
		\bigl|\m(x,S)([w_i])-\m(x,Y_S)([w_i])\bigr|
		\le
		\theta+\eta .
	\]
	Consequently,
	\[
		\bigl|\m(x,S)([w_i])-\mu([w_i])\bigr|
		\le
		\tau+\theta+\eta
		<\eps
	\]
	by~\eqref{eq:eta-choice}. This holds for every $i=1,\dots,p$.

	Taking $\mathcal S$ to be the family of good principal copies completes the
	proof.
\end{proof}

We shall use the ergodic decomposition in the following standard form. In the
application below, it allows us to decompose a $G$-ergodic measure into finitely
many $H$-ergodic components, where $H$ is a finite-index normal subgroup.

\begin{thm}[Ergodic decomposition, {\cite{Glasner}}]\label{thm:ergodic-decomposition}
	Let $\mathbf X=(X,\mathcal X,G,\mu)$ be a measure-preserving system, and let
	$\mathcal J\subset\mathcal X$ denote the $\sigma$-algebra of
	$G$-invariant sets. Let
	\[
		\pi:\mathbf X\to\mathbf Y
	\]
	be the factor corresponding to $\mathcal J$. Write the disintegration of
	$\mu$ over the factor measure $\nu$ as
	\[
		\mu=\int_Y \mu_y\,d\nu(y).
	\]
	For $y\in Y$, denote
	\[
		X_y=\pi^{-1}(y)
		\qquad\text{and}\qquad
		\mathcal X_y=\mathcal X\cap X_y .
	\]
	Then:
	\begin{enumerate}
		\item\label{identity}
		the group $G$ acts trivially on $Y$;
		\item
		for $\nu$-almost every $y\in Y$, the system
		\[
			(X_y,\mathcal X_y,\mu_y,G)
		\]
		is ergodic.
	\end{enumerate}
\end{thm}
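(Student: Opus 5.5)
The plan is to realize the factor $\pi$ concretely and then check the two properties separately, using only the pointwise ergodic theorem (Theorem~\ref{pointwise}) for a tempered F{\o}lner sequence $\F=(F_n)$. Since in our applications $X$ is a compact metric space (for instance $X=\alf^G$) and $\mathcal X$ is its Borel $\sigma$-algebra, I would take a countable family $\{f_j\}_{j\in\N}\subset\mathcal C(X)$ that is dense in $\mathcal C(X)$; for $\alf^G$ the indicator functions of cylinders suffice. Let $\bar f_j$ denote the limits given by Theorem~\ref{pointwise}. Each $\bar f_j$ is $G$-invariant and, by the mean ergodic theorem, coincides with $\mathbb E_\mu(f_j\mid\mathcal J)$. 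I would then define $\pi(x)=(\bar f_j(x))_{j\in\N}\in\R^{\N}$ on the full-measure, $G$-invariant set where all the limits exist, and set $\nu=\pi_*\mu$. The $\sigma$-algebra generated by $\pi$ is contained in $\mathcal J$. I would then argue that it equals $\mathcal J$ mod $\mu$, because conditional expectations of a dense family of continuous functions determine $\mathbb E_\mu(\,\cdot\mid\mathcal J)$ on all of $L^1$.

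Property~\eqref{identity} is then immediate. Each $\bar f_j$ is $G$-invariant, so $\pi(gx)=\pi(x)$, and the induced action on $Y$ is the identity. Next I would take the disintegration $\mu=\int_Y\mu_y\,d\nu(y)$, with $\mu_y$ concentrated on $X_y$; this exists because $X$ is standard Borel. I would verify that $\mu_y$ is $G$-invariant for $\nu$-a.e.\ $y$ by uniqueness of disintegration. For each $g$, the measures $g_*\mu_y$ also disintegrate $\mu=g_*\mu$ over $\pi$, since $\pi\circ g=\pi$. Hence $g_*\mu_y=\mu_y$ for a.e.\ $y$, and since $G$ is countable we may intersect these countably many full-measure sets.

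The main step is ergodicity of the fibres. For $\nu$-a.e.\ $y$, the identity $\bar f_j=\mathbb E_\mu(f_j\mid\mathcal J)$ together with the defining property of the disintegration gives $\bar f_j(x)=\int f_j\,d\mu_y$ for $\mu_y$-a.e.\ $x$. In other words, $\mu_y$-almost every point is generic for $\mu_y$ along $\F$ with respect to every $f_j$. By density, this extends to every $f\in\mathcal C(X)$, so $\mu_y$-a.e.\ point is generic for $\mu_y$. Applying Theorem~\ref{pointwise} to the system $(X,G,\mu_y)$ with $\phi\in L^1(\mu_y)$, the limit $\bar\phi$ is invariant. For continuous $\phi$ it is $\mu_y$-a.e.\ constant, equal to $\int\phi\,d\mu_y$. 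By $L^1(\mu_y)$-approximation and continuity of $\phi\mapsto\bar\phi$ in $L^1$, the same holds for all $\phi\in L^1(\mu_y)$. In particular, for a $G$-invariant set $A$ we get $\mathbf 1_A=\overline{\mathbf 1_A}=\mu_y(A)$ a.e., so $\mu_y(A)\in\{0,1\}$, which is ergodicity.

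I expect the main obstacle to be the measure-theoretic bookkeeping rather than the dynamics. One must keep all the exceptional null sets countable and $G$-invariant. One must also justify that $\pi$ really generates $\mathcal J$ mod $\mu$, so that conditioning on $\pi$ is the same as conditioning on $\mathcal J$. Finally, one must ensure $\mu_y(X_y)=1$ using the standard Borel structure. I would handle these by working throughout on the invariant full-measure set where all $\bar f_j$ exist, and by citing the standard existence and uniqueness of disintegrations on standard Borel spaces.
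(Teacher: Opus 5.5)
Your proof is correct. The paper gives no proof of its own to compare with: it quotes the result from Glasner's book, where it holds without any amenability assumption on $G$. What you give is a self-contained proof under the paper's standing hypotheses (countable amenable $G$, compact metric $X$). You build an explicit model of the invariant factor from the limits $\bar f_j$ supplied by Theorem~\ref{pointwise}, and identify $\sigma(\pi)$ with $\mathcal J$ modulo $\mu$ via $\bar f_j=\mathbb E_\mu(f_j\mid\mathcal J)$. You get invariance of the fibre measures from uniqueness of disintegration. Ergodicity of $\mu_y$ then follows because $\mu_y$-a.e.\ point is generic for $\mu_y$; the $L^1$-continuity of $\phi\mapsto\bar\phi$ you need comes from Fatou's lemma, $\|\bar\phi\|_1\le\|\phi\|_1$.

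The price is dependence on amenability and a tempered F{\o}lner sequence. That costs nothing here, since the theorem is only used in Lemma~\ref{almostergodic}, for a finite-index subgroup $H$ of an amenable group acting on $\alf^G$, and such $H$ is again countable and amenable. In return, your argument proves more than the statement: $\mu$-a.e.\ $x$ is generic for its own fibre measure $\mu_{\pi(x)}$. That is exactly part (3) of Lemma~\ref{almostergodic}, which the paper obtains afterwards by a separate application of the pointwise ergodic theorem.

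Two small points to tidy up. First, cylinder indicators are not themselves dense in $\mathcal C(\alf^G)$; only their rational linear span is, which is enough because every step of your argument is linear. Second, the statement refers to an arbitrary model of the factor corresponding to $\mathcal J$. You should note that any other model $\pi'$ has the same conditional measures, $\mu'_{\pi'(x)}=\mu_{\pi(x)}$ for $\mu$-a.e.\ $x$. Part (1) then holds there in the sense that $gB=B$ modulo $\nu$ for every measurable $B\subset Y$, since $(\pi')^{-1}(gB)=g(\pi')^{-1}(B)$ and $(\pi')^{-1}(B)\in\mathcal J$.
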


\begin{lem}\label{almostergodic}
	Let $(X,G,\mu)$ be an ergodic measure-preserving system, and let
	$H\lhd G$ be a normal subgroup of finite index. Let
	$\F=(F_n)_{n\in\N}$ be a tempered \Folner sequence in $G$. After discarding
	finitely many initial terms, set
	\[
		\F^H=(F_n\cap H)_{n\in\N}.
	\]
	Then $\F^H$ is a tempered \Folner sequence in $H$. Moreover, for the
	restricted system $(X,H,\mu)$, there exist $H$-ergodic measures
	$(\mu_s)_{s\in G/H}$ such that:
	\begin{enumerate}
		\item
		\[
			\mu=\frac{1}{|G/H|}\sum_{s\in G/H}\mu_s;
		\]
		\item\label{equivariance}
		\[
			g_*\mu_s=\mu_{gs}
			\qquad\text{for all } g\in G \text{ and } s\in G/H;
		\]
		\item\label{empiric}
		for $\mu$-almost every $x\in X$, there exists $s\in G/H$ such that
		\[
			\hat\omega_{\F^H}(x)=\{\mu_s\}.
		\]
	\end{enumerate}
	The measures $\mu_s$ need not be distinct.
\end{lem}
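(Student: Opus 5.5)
The plan has three parts: show that $\F^H$ is a tempered F{\o}lner sequence in $H$; build the components $\mu_s$ from the ergodic decomposition for the $H$-action; and then obtain the pointwise statement from Lindenstrauss' theorem applied along $\F^H$.

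\emph{F{\o}lner and tempered.} Fix a transversal $R$ of $G/H$. For $h\in H$ we have $h(F_n\cap H)\triangle(F_n\cap H)\subset (hF_n\triangle F_n)\cap H$. So it suffices to show that $|F_n\cap H|/|F_n|\to 1/|G/H|$. Invariance under the finitely many $r\in R$ gives this. Indeed, $F_n=\bigsqcup_{r}(F_n\cap Hr)$, and $r'^{-1}r$-invariance shows that all the pieces $F_n\cap Hr$ have asymptotically equal size; normality lets us write $Hr=rH$, so the translates match up. This limit also yields $|F_n\cap H|>0$ for large $n$, which is why finitely many initial terms are discarded. Temperedness follows from
\[
\bigcup_{k\le n}(F_k\cap H)^{-1}(F_{n+1}\cap H)\subset \Big(\bigcup_{k\le n}F_k^{-1}F_{n+1}\Big)\cap H,
\]
which has size at most $C|F_{n+1}|\le C'|F_{n+1}\cap H|$ by the ratio bound.

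\emph{Finite decomposition.} Let $\mathcal J_H$ be the $\sigma$-algebra of $H$-invariant sets. Since $H$ is normal, $G$ maps $\mathcal J_H$ to itself, and $H$ acts trivially on it, so $G/H$ acts on $\mathcal J_H$. $G$-ergodicity says that the only $G/H$-invariant sets in $\mathcal J_H$ are null or conull. I claim $\mathcal J_H$ is generated, mod $\mu$, by finitely many atoms. Take any $A\in\mathcal J_H$ with $\mu(A)>0$. The set $\bigcup_{s}sA$ is $G$-invariant, hence has full measure, so $\mu(A)\ge 1/|G/H|$. Thus there is an atom $A_0$ of $\mathcal J_H$: take an $H$-invariant set of positive measure that is minimal, which exists because the measures of its $\mathcal J_H$-subsets are bounded below. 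The translates $sA_0$ are atoms, and any two of them are equal or disjoint mod $\mu$. Let $\Lambda=\{s:sA_0=A_0\}$, which is a subgroup of $G/H$.

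Define $\mu_s=\frac{|G/H|}{\dots}$ more concretely by setting $\mu_{s}=\mu(\,\cdot\cap sA_0)/\mu(sA_0)$, indexed so that $\mu_{s}=\mu_{s'}$ when $s\Lambda=s'\Lambda$. Then $\mu=\frac1{|G/H|}\sum_s\mu_s$, because each distinct atom is counted $|\Lambda|$ times and has mass $1/[G/H:\Lambda]$. Equivariance $g_*\mu_s=\mu_{gs}$ follows from $g(sA_0)=(gs)A_0$ and $G$-invariance of $\mu$. Each $\mu_s$ is $H$-invariant because $sA_0$ is $H$-invariant. Each $\mu_s$ is $H$-ergodic because $sA_0$ is an atom of $\mathcal J_H$. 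This is an elementary version of Theorem~\ref{thm:ergodic-decomposition}, with finitely many components.

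\emph{Generic behaviour.} Apply Theorem~\ref{pointwise} to the $H$-action on $(X,\mu_s)$ with the tempered sequence $\F^H$, for a countable dense family of $\phi\in\mathcal C(X)$. For $\mu_s$-a.e.\ $x$ the averages converge to $\int\phi\,d\mu_s$. Hence $\m(x,F_n\cap H)\to\mu_s$, that is, $\hat\omega_{\F^H}(x)=\{\mu_s\}$. Since $\mu_s$ is carried by $sA_0$ and these sets cover $X$ mod $\mu$, this holds for $\mu$-a.e.\ $x$, with $s$ determined by which atom contains $x$. Here empirical measures along $\F^H$ are computed using the $H$-action only.

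The step I expect to be most delicate is establishing atomicity of $\mathcal J_H$ cleanly, in particular the existence of a minimal positive-measure $H$-invariant set. I would argue it via the lower bound $\mu(A)\ge 1/|G/H|$: take $A$ of minimal positive measure among the finitely many possible values attained by unions of translates. A secondary point is the bookkeeping of indices when the $\mu_s$ coincide.
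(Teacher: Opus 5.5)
Your proof is correct. It has the same outer structure as the paper's. The F{\o}lner and temperedness properties of $\F^H$ come from $(hF_n\triangle F_n)\cap H$ and from the inclusion of the tempered unions in $H$. You justify $|F_n\cap H|/|F_n|\to 1/|G/H|$, which the paper only asserts; the translating element there should be $r'r^{-1}$, since $g(F_n\cap Hr)=gF_n\cap Hgr$. The pointwise statement then comes from Theorem~\ref{pointwise} applied to each $H$-ergodic $\mu_s$ along $\F^H$.

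The genuine difference is how the $\mu_s$ are produced.
\begin{itemize}
\item The paper passes to the factor $(Y,\nu)$ of $\mathcal J_H$ and uses the ergodic decomposition theorem (Theorem~\ref{thm:ergodic-decomposition}) to get $H$-ergodic conditional measures. It notes that $G$ acts ergodically on $Y$ through the finite group $G/H$, so $\nu$ is uniform on one finite orbit, and sets $\mu_s=g_*\mu_{y_0}$. This leaves implicit the equivariance $g_*\mu_y=\mu_{gy}$ of the disintegration, and the fact that an ergodic finite-group action is carried by a single orbit.
\item You avoid disintegration altogether. Your bound $\mu(A)\ge 1/|G/H|$ for positive-measure $A\in\mathcal J_H$ is exactly the content of the paper's ``single finite orbit'' step. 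It makes $\mathcal J_H$ finite and atomic mod $\mu$, with atoms permuted transitively by $G/H$. The components are then just the normalized restrictions of $\mu$ to the atoms $sA_0$.
\end{itemize}
In your version, invariance, ergodicity, equivariance and the averaging formula become one-line checks. The distinct components are visibly mutually singular, there are $[G/H:\Lambda]$ of them, and the coset $s$ in the pointwise statement is simply the atom containing $x$. The paper's version is shorter only because it delegates this to a general theorem.

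The step you flagged does need tightening. Your first reason, a positive lower bound on measures, is the right one. Your closing suggestion, minimizing over values attained by unions of translates, only yields an atom of the finite algebra generated by translates of a single set, not an atom of $\mathcal J_H$. Instead, let $a=\inf\{\mu(A):A\in\mathcal J_H,\ \mu(A)>0\}\ge 1/|G/H|$ and pick $A\in\mathcal J_H$ with $a\le\mu(A)<2a$. Any splitting of $A$ into two positive-measure members of $\mathcal J_H$ would give $\mu(A)\ge 2a$, so $A$ is an atom. Also delete the unfinished formula just before your definition of $\mu_s$.
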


\begin{proof}
	Since $\mu$ is $G$-invariant, it is also $H$-invariant, although it need not
	be $H$-ergodic. Let $\mathcal J_G$ and $\mathcal J_H$ denote the
	$\sigma$-algebras of $G$-invariant and $H$-invariant sets, respectively.
	Then
	\[
		\mathcal J_G\subseteq \mathcal J_H .
	\]
	Let
	\[
		\pi:(X,H,\mu)\to(Y,\nu)
	\]
	be the factor corresponding to $\mathcal J_H$. By
	Theorem~\ref{thm:ergodic-decomposition}, the action of $H$ on $Y$ is
	trivial, and the conditional measures in the disintegration
	\[
		\mu=\int_Y \mu_y\,d\nu(y)
	\]
	are $H$-ergodic for $\nu$-almost every $y\in Y$.

	Because $H$ is normal, the action of $G$ sends $H$-invariant sets to
	$H$-invariant sets. Hence the $G$-action descends to an action on $Y$.
	Since $H$ acts trivially on $Y$, this action factors through the finite
	quotient $G/H$. Moreover, since $(X,G,\mu)$ is ergodic, the factor
	$(Y,G,\nu)$ is ergodic. Therefore $\nu$ is supported on a single finite
	$G/H$-orbit, and the measure on this orbit is uniform.

	Choose a point $y_0$ in this orbit. For a coset $s\in G/H$, choose any
	representative $g\in G$ of $s$ and define
	\[
		\mu_s=g_*\mu_{y_0}.
	\]
	This gives a family indexed by $G/H$, possibly with repetitions. Indeed,
	if two cosets send $y_0$ to the same point of the orbit, then the
	corresponding conditional measures agree. Each $\mu_s$ is $H$-ergodic,
	because it is the image of an $H$-ergodic conditional measure under an
	element of $G$.

	Since the measure on the finite orbit is uniform, and since indexing by
	$G/H$ only repeats each point of the orbit the same number of times, we get
	\[
		\mu=\frac{1}{|G/H|}\sum_{s\in G/H}\mu_s.
	\]
	The definition also gives
	\[
		g_*\mu_s=\mu_{gs}
		\qquad\text{for all } g\in G \text{ and } s\in G/H,
	\]
	which proves~\ref{equivariance}.

	It remains to prove~\ref{empiric}. Since $H$ has finite index in $G$, the
	sets $F_n\cap H$ are non-empty for all sufficiently large $n$, and
	\[
		\frac{|F_n\cap H|}{|F_n|}\longrightarrow \frac{1}{|G/H|}.
	\]
	Moreover, $\F^H=(F_n\cap H)_{n\in\N}$ is a \Folner sequence in $H$. It is
	also tempered: indeed,
	\[
		\bigcup_{k\le n}(F_k\cap H)^{-1}(F_{n+1}\cap H)
		\subseteq
		\bigcup_{k\le n}F_k^{-1}F_{n+1},
	\]
	and the temperedness of $\F$, together with
	$|F_{n+1}\cap H|\asymp |F_{n+1}|$, gives the required bound after discarding
	finitely many initial terms.

	Applying the pointwise ergodic theorem to the $H$-action and the tempered
	\Folner sequence $\F^H$, we obtain that for each $s\in G/H$ and for
	$\mu_s$-almost every $x\in X$,
	\[
		\m(x,F_n\cap H)\xrightarrow[n\to\infty]{w^*}\mu_s.
	\]
	Equivalently,
	\[
		\hat\omega_{\F^H}(x)=\{\mu_s\}.
	\]
	Integrating over the finite decomposition of $\mu$ gives the same conclusion
	for $\mu$-almost every $x$. This proves~\ref{empiric}.
\end{proof}
\begin{defn}\label{piece}
	Let $m\in\N$ and let
	\[
		\mathcal C_m=\alf^{F_m}
	\]
	be the set of all configurations on $F_m$. Fix a map
	\[
		\psi\colon \mathcal C_m\to\mathcal C_m .
	\]
	It induces a block map $\Psi\colon\alf^G\to\alf^G$ as follows. For
	$x\in\alf^G$, $h\in H_m$, and $a\in F_m$, set
	\[
		\Psi(x)_{ah}
		=
		\psi\bigl((x_{bh})_{b\in F_m}\bigr)(a).
	\]
	Thus $\Psi$ acts independently on the principal copies $F_mh$, $h\in H_m$.

	If $X\subseteq\alf^G$ is a subsystem and $\Psi(X)\subseteq X$, then the
	restriction $\Psi\colon X\to X$ is called a \emph{jigsaw of level $m$}.
	A jigsaw is always understood together with its chosen level, and we write
	$\ell(\Psi)=m$.
\end{defn}

The next lemma provides a full-measure set of points whose generic behaviour is
stable under finite compositions of jigsaws, provided that the averaging is
taken along a subgroup level below all levels involved in the composition.

\begin{lem}[Universally good points]\label{lem:universal}
	Let $X\subseteq\alf^G$ be a subsystem, and let $\mu\in\M_G^e(X)$.
	Let $(H_n)_{n\ge 1}$ be a decreasing sequence of finite-index normal
	subgroups with
	\[
		\bigcap_{n\ge 1}H_n=\{e\},
	\]
	and let $\F=(F_n)_{n\ge 1}$ be the associated tempered Cortez--Petite
	\Folner sequence, where each $F_n$ is a fundamental domain for $G/H_n$.

	For each $n$, let
	\[
		\F^{H_n}=(F_k\cap H_n)_{k\ge 1}
	\]
	denote the induced \Folner sequence in $H_n$, after discarding finitely many
	initial empty terms.  

	There exists a $\mu$-conull set $A\subseteq X$ such that every $x\in A$
	satisfies the following properties:
	\begin{enumerate}
		\item[\textup{(U1)}]
		$x$ is $\F$-generic for $\mu$.

		\item[\textup{(U2)}]
		For every $n\ge 1$, there exists an $H_n$-ergodic component
		$\mu_{n,s}$ of the disintegration of $\mu$ relative to the action of
		$H_n$ such that
		\[
			\hat\omega_{\F^{H_n}}(x)=\{\mu_{n,s}\}.
		\]

		\item[\textup{(U3)}]
		If
		\[
			\Phi=\Psi_r\circ\cdots\circ\Psi_1
		\]
		is any finite composition of jigsaws and
		\[
			m>\max\{\ell(\Psi_1),\dots,\ell(\Psi_r)\},
		\]
		then
		\[
			\hat\omega_{\F^{H_m}}(\Phi(x))
			=
			\{\Phi_*(\mu_{m,s})\},
		\]
		where $\mu_{m,s}$ is the component from \textup{(U2)} corresponding to
		$x$ and $m$. In particular, $\Phi(x)$ is $\F^{H_m}$-generic for an
		$H_m$-ergodic component of $\Phi_*\mu$.
	\end{enumerate}
\end{lem}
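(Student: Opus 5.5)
The plan is to obtain $A$ as a countable intersection of conull sets. (U1) and (U2) come directly from earlier results. (U3) is reduced to (U2) by checking that a composition of jigsaws acts, along the sequence $\F^{H_m}$, like a finite sliding block code that is $H_m$-equivariant.

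\textbf{(U1) and (U2).} For (U1), since $\F$ is tempered and $\mu$ is ergodic, Theorem~\ref{pointwise} applied to the countably many indicator functions of cylinders gives a conull set of $\F$-generic points for $\mu$. For (U2), fix $n$ and apply Lemma~\ref{almostergodic} with $H=H_n$. This produces a conull set $A_n$ on which $\hat\omega_{\F^{H_n}}(x)=\{\mu_{n,s}\}$ for some $s=s(n,x)\in G/H_n$. We then set $A$ to be the intersection of the (U1) set with all the $A_n$. It is conull because countably many conull sets are intersected. No further measure-theoretic condition is needed for (U3), which I will derive deterministically for every point of $A$.

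\textbf{Reduction of (U3) to a block code.} Fix $\Phi=\Psi_r\circ\cdots\circ\Psi_1$ and $m>\max_j\ell(\Psi_j)$.

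The key observation concerns a single jigsaw $\Psi$ of level $l<m$. Since $H_m\subseteq H_l$ and $H_l$ is normal, right translation by $h\in H_m$ permutes the principal copies $F_l\gamma$, $\gamma\in H_l$. Hence $\Psi$ commutes with the shift by every $h\in H_m$. The coordinate $\Psi(x)_{a}$ with $a=b\gamma$ ($b\in F_l$) depends only on $x|_{F_l\gamma}=x|_{F_l b^{-1}a}$. So the coordinate $e$ of $\Psi(gx)$ is a function $\Theta_{[g]_l}$ of $(gx)|_{F_lF_l^{-1}}$, where the function depends only on the coset of $g$ modulo $H_l$.

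Composing the $r$ maps, $\Phi$ is $H_m$-equivariant. Moreover, for $g\in G$, the value $\Phi(gx)_e$ is a continuous function (a cylinder function on a fixed finite window $W$) of $gx$ that depends only on the class of $g$ modulo $H_m$. In particular, the map $\Phi\colon X\to X$ itself is continuous, being determined coordinatewise by finite windows $W a$.

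\textbf{Computing the empirical measures.} Using $H_m$-equivariance,
\[
\m(\Phi(x),F_k\cap H_m)=\frac{1}{|F_k\cap H_m|}\sum_{h\in F_k\cap H_m}\hat\delta_{h\Phi(x)}=\Phi_*\,\m(x,F_k\cap H_m).
\]
Since $\Phi$ is continuous, $\Phi_*$ is weak$^*$ continuous. By (U2) the right-hand side converges to $\Phi_*\mu_{m,s}$, which gives $\hat\omega_{\F^{H_m}}(\Phi(x))=\{\Phi_*\mu_{m,s}\}$.

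It remains to check that $\Phi_*\mu_{m,s}$ is an $H_m$-ergodic component of $\Phi_*\mu$. First, $\Phi_*\mu_{m,s}$ is $H_m$-invariant and $H_m$-ergodic, since it is a factor of the $H_m$-ergodic measure $\mu_{m,s}$ under an $H_m$-equivariant map. Second, $\Phi_*\mu=\frac{1}{|G/H_m|}\sum_{s}\Phi_*\mu_{m,s}$.

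\textbf{Main obstacle.} The delicate step is the equivariance claim, namely the precise bookkeeping that right translation by $H_m$ preserves the tiling $\{F_l\gamma:\gamma\in H_l\}$ for every $l<m$. With the convention $(gx)_h=x_{hg}$, one must verify that $(h\Psi(x))_{a\gamma}=\Psi(x)_{a\gamma h}$, and that $\gamma h\in H_l$, so the copy index shifts within $H_l$. This uses $H_m\subseteq H_l$, which is where $m>\ell(\Psi_j)$ enters.

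One also needs $\Psi(X)\subseteq X$, so that $\Phi(x)\in X$. This is built into Definition~\ref{piece}.
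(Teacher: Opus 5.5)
Your proposal is correct and follows essentially the same route as the paper. You intersect the conull set of $\F$-generic points with the sets $A_n$ from Lemma~\ref{almostergodic}, and you note that every jigsaw of level $\ell\le m$ is $H_m$-equivariant. You then pass to the limit in $\m(\Phi(x),F_k\cap H_m)=\Phi_*\m(x,F_k\cap H_m)$ using continuity of $\Phi$, and you get ergodicity of $\Phi_*\mu_{m,s}$ from equivariance. The sliding-block-code digression is not needed, and where you write $\Psi(gx)_e$ you mean $(g\Psi(x))_e=\Psi(x)_g$, but this does not affect the argument.
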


\begin{proof}
	For each $n\ge 1$, Lemma~\ref{almostergodic}, applied to the finite-index
	normal subgroup $H_n$, gives a decomposition
	\[
		\mu=\frac{1}{|G/H_n|}\sum_{s\in G/H_n}\mu_{n,s}
	\]
	into $H_n$-ergodic measures, possibly with repetitions. It also gives a
	$\mu$-conull set $A_n\subseteq X$ such that every $x\in A_n$ satisfies
	\[
		\hat\omega_{\F^{H_n}}(x)=\{\mu_{n,s}\}
	\]
	for some $s\in G/H_n$.

	Since $\mu$ is $G$-ergodic and $\F$ is tempered, the pointwise ergodic
	theorem gives a $\mu$-conull set $A_0\subseteq X$ of $\F$-generic points
	for $\mu$.

	We now record the equivariance property of jigsaws. Let $\Psi$ be a jigsaw
	of level $n$. By definition, $\Psi$ acts independently on the principal
	copies
	\[
		F_nh,\qquad h\in H_n.
	\]
	If $h_0\in H_n$, then the shift by $h_0$ permutes these copies, and the same
	block rule is applied on each copy. Hence
	\[
		\Psi(h_0x)=h_0\Psi(x)
		\qquad\text{for all } h_0\in H_n \text{ and } x\in X.
	\]
	Thus $\Psi$ is $H_n$-equivariant. Since the subgroups $(H_n)_{n\in\N}$ are
	decreasing, $\Psi$ is also $H_m$-equivariant for every $m\ge n$.

	It follows that if
	\[
		\Phi=\Psi_r\circ\cdots\circ\Psi_1
	\]
	and
	\[
		m>\max\{\ell(\Psi_1),\dots,\ell(\Psi_r)\},
	\]
	then $\Phi$ is $H_m$-equivariant.

	We next observe that genericity along $\F^{H_m}$ is preserved under such an
	$H_m$-equivariant map. Suppose that
	\[
		\hat\omega_{\F^{H_m}}(x)=\{\mu_{m,s}\}.
	\]
	For all sufficiently large $q$, write
	\[
		\omega_q^{H_m}(x)
		=
		\frac{1}{|F_q\cap H_m|}
		\sum_{h\in F_q\cap H_m}\delta_{hx}.
	\]
	Then
	\[
		\omega_q^{H_m}(x)\xrightarrow[q\to\infty]{w^*}\mu_{m,s}.
	\]
	Using the $H_m$-equivariance of $\Phi$, we obtain
	\[
		\omega_q^{H_m}(\Phi(x))
		=
		\frac{1}{|F_q\cap H_m|}
		\sum_{h\in F_q\cap H_m}\delta_{h\Phi(x)}
		=
		\frac{1}{|F_q\cap H_m|}
		\sum_{h\in F_q\cap H_m}\delta_{\Phi(hx)}
		=
		\Phi_*\omega_q^{H_m}(x).
	\]
	Since $\Phi$ is continuous, pushforward by $\Phi$ is continuous in the
	weak$^*$ topology. Therefore
	\[
		\omega_q^{H_m}(\Phi(x))
		=
		\Phi_*\omega_q^{H_m}(x)
		\xrightarrow[q\to\infty]{w^*}
		\Phi_*\mu_{m,s}.
	\]
	Hence
	\[
		\hat\omega_{\F^{H_m}}(\Phi(x))=\{\Phi_*\mu_{m,s}\}.
	\]
	Moreover, $\Phi_*\mu_{m,s}$ is $H_m$-ergodic, because it is the pushforward
	of an $H_m$-ergodic measure under an $H_m$-equivariant map.

	Finally, set
	\[
		A=\bigcap_{n\ge 0}A_n.
	\]
	Then $\mu(A)=1$. Property \textup{(U1)} holds by the definition of $A_0$,
	and property \textup{(U2)} holds by the definition of the sets $A_n$ for
	$n\ge 1$. Property \textup{(U3)} follows from the preceding paragraph,
	applied with the corresponding value of $m$.
\end{proof}

\begin{defn}
	A point $x\in X$ is called \emph{universally good} if it satisfies
	\textup{(U1)}--\textup{(U3)} in Lemma~\ref{lem:universal}.
\end{defn}

\begin{rem}
	In particular, every point which is generic for a measure whose restriction
	to each $H_n$ is ergodic is universally good.
\end{rem}

\begin{lem}\label{nietracimy}
	Let $X\subseteq\alf^G$ be a subsystem, and let $x\in X$ be a universally
	good point which is generic for some $\mu\in\M_G^e(X)$ with respect to
	$\F$. Let $\Psi\colon X\to X$ be a jigsaw of level $n$, induced by a block
	map
	\[
		\psi\colon\mathcal C_n\to\mathcal C_n,
	\]
	and set
	\[
		y=\Psi(x).
	\]
	Then $y$ is a universally good point and is generic, with respect to $\F$,
	for some measure $\nu\in\M_G^e(X)$.
\end{lem}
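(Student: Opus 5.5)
The plan is to build the candidate measure $\nu$ explicitly out of the $H_m$-decomposition of $\mu$ given by Lemma~\ref{almostergodic}, and then to check genericity, ergodicity and universal goodness of $y$ in that order.

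\textbf{Step 1: define $\nu$.} Fix $m>n$. By (U3), $\hat\omega_{\F^{H_m}}(y)=\{\Psi_*\mu_{m,s}\}$, where $\mu_{m,s}$ is the $H_m$-component attached to $x$. The natural $G$-invariant candidate is the $G/H_m$-average
\[
\nu=\frac{1}{|G/H_m|}\sum_{f\in F_m} f_*\Psi_*\mu_{m,s}.
\]
Because $\Psi$ is $H_n$-equivariant, replacing $m$ by $m'>m$ should not change this average. I will check this by grouping the cosets of $H_{m'}$ into cosets of $H_m$, and by using the relation $\mu_{m,s}=\frac{1}{[H_m:H_{m'}]}\sum \mu_{m',s'}$ over the refining components, which follows from uniqueness of the ergodic decomposition.

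\textbf{Step 2: $y$ is $\F$-generic for $\nu$.} Write $F_q=\bigsqcup_{f\in F_m} f\,(F_q\cap H_m)$; this uses normality of $H_m$ and the fact that $F_q$ is a union of principal copies $F_m\gamma$. Then
\[
\m(y,F_q)=\frac{1}{|F_m|}\sum_{f\in F_m} f_*\,\m(y,F_q\cap H_m),
\]
since $f h y$ is the $f$-translate of $hy$. Each $\m(y,F_q\cap H_m)$ converges to $\Psi_*\mu_{m,s}$, so $\m(y,F_q)\to\nu$. Hence $\nu$ is $G$-invariant and $y$ is generic for it.

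\textbf{Step 3: ergodicity of $\nu$.} This is the step I expect to be the main obstacle, since $\Psi$ is not $G$-equivariant. I would argue via the $H_m$-structure. $\Psi_*\mu_{m,s}$ is $H_m$-ergodic, so each $f_*\Psi_*\mu_{m,s}$ is $H_m$-ergodic (normality). Therefore $\nu$ is an average of $H_m$-ergodic measures permuted transitively by $G$. A $G$-invariant set $B$ has the property that $\nu'(B)\in\{0,1\}$ for each component $\nu'$. By $G$-invariance of $B$ together with transitivity of the permutation (in the sense $f_*\nu'(B)=\nu'(f^{-1}B)=\nu'(B)$), all of these values coincide, so $\nu(B)\in\{0,1\}$. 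The delicate point is to make the transitive-permutation claim precise: $g_*f_*\Psi_*\mu_{m,s}$ should again be one of the components. I would write $gf=f'h$ with $h\in H_m$ and check that $h_*\Psi_*\mu_{m,s}=\Psi_*\mu_{m,s}$, which holds by $H_m$-invariance. As a backup, I could instead verify Oxtoby's criterion (Theorem~\ref{thm:Oxtoby}) for $y$, transferring it from $x$ through the block structure.

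\textbf{Step 4: $y$ is universally good.} (U1) is Step 2. For (U2), applying Lemma~\ref{almostergodic} to $\nu$ shows that the $H_j$-components of $\nu$ are exactly the measures $f_*\Psi_*\mu_{j,s}$, so for every $j>n$ the point $y$ has the required $\hat\omega_{\F^{H_j}}$. For $j\le n$, the limit along $\F^{H_j}$ is an average over $H_j/H_{m}$ of $H_m$-limits; it is therefore a single $H_j$-invariant measure, and by the same transitivity argument it is $H_j$-ergodic and equal to a component of $\nu$. For (U3), a composition $\Phi\circ\Psi$ of jigsaws applied to $x$ is again covered by (U3) for $x$, so $\hat\omega_{\F^{H_m}}(\Phi(y))=\{\Phi_*\Psi_*\mu_{m,s}\}=\{\Phi_*\nu_{m,s'}\}$, and this is exactly what (U3) requires of $y$.
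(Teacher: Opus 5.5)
Steps 1--3 and your (U2) argument are essentially the paper's proof. The paper sets $\nu=\frac{1}{|F_n|}\sum_{s\in F_n}s_*\lambda$ with $\lambda=\Psi_*\mu_{n,p}$. It works at level $n$ via (U2) and $H_n$-equivariance, where you work at some $m>n$ via (U3); this difference is immaterial. It then gets genericity from $F_M=\bigsqcup_{s\in F_n}s(F_M\cap H_n)$ and ergodicity from $\nu(B)=\lambda(B)\in\{0,1\}$. It proves (U2) by separating large levels, handled by equivariance, from small levels, handled by averaging over $F_n\cap H_r$.

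The step that does not go through as written is (U3). For $y$, (U3) must hold for every $m>\max\{\ell(\Psi_1),\dots,\ell(\Psi_r)\}$, where the $\Psi_i$ are the jigsaws making up $\Phi$. Nothing lets you assume $m>n$. Your argument applies (U3) for $x$ to the composition $\Phi\circ\Psi$, and that requires $m>\ell(\Psi)=n$ as well. The identification $\nu_{m,s'}=\Psi_*\mu_{m,s}$ you use was also only established for $m>n$. So the range $\max_i\ell(\Psi_i)<m\le n$ is not covered, and it is nonempty as soon as all jigsaws in $\Phi$ have level below $n$.

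The repair uses only what you have already proved, and it is exactly how the paper argues. You established (U2) for $y$ at every level, including $j\le n$. Hence $\hat\omega_{\F^{H_m}}(y)=\{\nu_m\}$ for an $H_m$-ergodic component $\nu_m$ of $\nu$. Since $\Phi$ is continuous and $H_m$-equivariant, $\m(\Phi(y),F_q\cap H_m)=\Phi_*\m(y,F_q\cap H_m)\to\Phi_*\nu_m$, as in the proof of Lemma~\ref{lem:universal}, and $\Phi_*\nu_m$ is $H_m$-ergodic. This treats all admissible $m$ at once, so routing through (U3) for $x$ is unnecessary.

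Three minor remarks:
\begin{enumerate}
\item The independence-of-$m$ check in Step 1 is superfluous, because Step 2 already identifies $\nu$ as the limit of $\m(y,F_q)$.
\item The transitivity concern in Step 3 is not needed. $G$-invariance of $\nu$ comes from Step 2, and the ergodicity computation only uses $\lambda(f^{-1}B)=\lambda(B)$.
\item The decomposition $F_q=\bigsqcup_{f\in F_m}f(F_q\cap H_m)$ comes from the Cortez--Petite tiling $F_q=\bigsqcup_{\gamma\in F_q\cap H_m}F_m\gamma$, not from normality of $H_m$.
\end{enumerate}
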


\begin{proof}
	By Lemma~\ref{almostergodic}, applied to the subgroup $H_n$, the measure
	$\mu$ decomposes as
	\[
		\mu=\frac{1}{|G/H_n|}\sum_{s\in G/H_n}\mu_{n,s},
	\]
	where the measures $\mu_{n,s}$ are $H_n$-ergodic, possibly with
	repetitions. Since $x$ is universally good, there is some $p\in G/H_n$ such
	that
	\[
		\hat\omega_{\F^{H_n}}(x)=\{\mu_{n,p}\}.
	\]
	Set
	\[
		\lambda=\Psi_*\mu_{n,p}.
	\]
	Since $\Psi$ is $H_n$-equivariant and $\mu_{n,p}$ is $H_n$-ergodic, the
	measure $\lambda$ is $H_n$-ergodic. Moreover, as in the proof of
	Lemma~\ref{lem:universal}, the $H_n$-equivariance and continuity of $\Psi$
	give
	\[
		\hat\omega_{\F^{H_n}}(y)=\{\lambda\}.
	\]

	Choose the fundamental domain $F_n$ as a set of representatives for
	$G/H_n$ and define
	\[
		\nu=\frac{1}{|F_n|}\sum_{s\in F_n}s_*\lambda.
	\]
	This measure is $G$-invariant. Indeed, multiplication by any element of
	$G$ permutes the cosets of $H_n$, and $\lambda$ is $H_n$-invariant.
	Moreover, $\nu$ is $G$-ergodic. If $B$ is $G$-invariant, then
	\[
		\nu(B)
		=
		\frac{1}{|F_n|}\sum_{s\in F_n}\lambda(s^{-1}B)
		=
		\lambda(B).
	\]
	Since $B$ is also $H_n$-invariant and $\lambda$ is $H_n$-ergodic, we get
	$\lambda(B)\in\{0,1\}$. Hence $\nu(B)\in\{0,1\}$.

	We now prove that $y$ is generic for $\nu$ along $\F$. Let
	$f\in\mathcal C(X)$. For $M>n$, put
	\[
		C_M=F_M\cap H_n .
	\]
	By the Cortez--Petite construction,
	\[
		F_M=\bigsqcup_{s\in F_n}sC_M .
	\]
	Therefore
	\[
		\frac{1}{|F_M|}\sum_{g\in F_M}f(gy)
		=
		\frac{1}{|F_n|}\sum_{s\in F_n}
		\frac{1}{|C_M|}\sum_{h\in C_M}f(shy).
	\]
	Since $\Psi$ is $H_n$-equivariant and $y=\Psi(x)$, we have
	\[
		hy=h\Psi(x)=\Psi(hx)
		\qquad\text{for every } h\in H_n .
	\]
	Hence
	\[
		\frac{1}{|F_M|}\sum_{g\in F_M}f(gy)
		=
		\frac{1}{|F_n|}\sum_{s\in F_n}
		\frac{1}{|C_M|}\sum_{h\in C_M}
		(U_s f)(\Psi(hx)),
	\]
	where $U_s f(z)=f(sz)$. Since $x$ is $\F^{H_n}$-generic for
	$\mu_{n,p}$, and since $U_s f\circ\Psi$ is continuous, the inner averages
	converge to
	\[
		\int_X U_s f\circ\Psi\,d\mu_{n,p}
		=
		\int_X U_s f\,d\lambda
		=
		\int_X f\,d(s_*\lambda).
	\]
	It follows that
	\[
		\lim_{M\to\infty}
		\frac{1}{|F_M|}\sum_{g\in F_M}f(gy)
		=
		\frac{1}{|F_n|}\sum_{s\in F_n}\int_X f\,d(s_*\lambda)
		=
		\int_X f\,d\nu .
	\]
	Thus $y$ is $\F$-generic for the $G$-ergodic measure $\nu$.

	It remains to check that $y$ is universally good. Property \textup{(U1)}
	has just been proved. We verify \textup{(U2)}. Fix $r\ge 1$.

	First suppose that $r\ge n$. Then $\Psi$ is $H_r$-equivariant. Since $x$
	is universally good, there is an $H_r$-ergodic component $\mu_{r,t}$ of
	$\mu$ such that
	\[
		\hat\omega_{\F^{H_r}}(x)=\{\mu_{r,t}\}.
	\]
	As in the proof of Lemma~\ref{lem:universal}, the $H_r$-equivariance and
	continuity of $\Psi$ give
	\[
		\hat\omega_{\F^{H_r}}(y)=\{\Psi_*\mu_{r,t}\}.
	\]
	The measure $\Psi_*\mu_{r,t}$ is $H_r$-ergodic.

	Now suppose that $r<n$. Let
	\[
		D_{r,n}=F_n\cap H_r .
	\]
	Then $D_{r,n}$ is a set of representatives for $H_r/H_n$, and for
	$M\ge n$ the Cortez--Petite construction gives
	\[
		F_M\cap H_r=\bigsqcup_{s\in D_{r,n}}s(F_M\cap H_n).
	\]
	Repeating the averaging argument used above to prove $\F$-genericity of
	$y$, but with $H_r$ in place of $G$, we obtain
	\[
		\hat\omega_{\F^{H_r}}(y)
		=
		\left\{
		\frac{1}{|D_{r,n}|}\sum_{s\in D_{r,n}}s_*\lambda
		\right\}.
	\]
	The measure inside the braces is $H_r$-invariant and $H_r$-ergodic by the
	same finite-orbit averaging argument used above.

	In both cases, write $\nu_r$ for the unique measure in
	$\hat\omega_{\F^{H_r}}(y)$. We have shown that $\nu_r$ is $H_r$-ergodic.
	Moreover, since $y$ is $\F$-generic for $\nu$, decomposing
	\[
		F_M=\bigsqcup_{s\in F_r}s(F_M\cap H_r)
	\]
	and passing to the limit gives
	\[
		\nu=\frac{1}{|F_r|}\sum_{s\in F_r}s_*\nu_r .
	\]
	Hence $\nu_r$ is an $H_r$-ergodic component of $\nu$. This proves
	\textup{(U2)}.

	Finally, we verify \textup{(U3)}. Let
	\[
		\Phi=\Psi_k\circ\cdots\circ\Psi_1
	\]
	be a finite composition of jigsaws, and let
	\[
		r>\max\{\ell(\Psi_1),\dots,\ell(\Psi_k)\}.
	\]
	Then $\Phi$ is $H_r$-equivariant. By \textup{(U2)}, there is an
	$H_r$-ergodic component $\nu_r$ of $\nu$ such that
	\[
		\hat\omega_{\F^{H_r}}(y)=\{\nu_r\}.
	\]
	As in the proof of Lemma~\ref{lem:universal}, the $H_r$-equivariance and
	continuity of $\Phi$ imply
	\[
		\hat\omega_{\F^{H_r}}(\Phi(y))=\{\Phi_*\nu_r\}.
	\]
	Moreover, $\Phi_*\nu_r$ is $H_r$-ergodic. Hence \textup{(U3)} holds.

	Therefore $y$ satisfies \textup{(U1)}--\textup{(U3)} and is generic along
	$\F$ for the ergodic measure $\nu$. Thus $y$ is universally good.
\end{proof}

	\begin{rem}
	The assumption of universal goodness in Lemma~\ref{nietracimy} cannot be
	replaced merely by genericity for an ergodic measure. Let $G=\mathbb Z$ and
	consider a point obtained by concatenating blocks
	\[
		(01)^{k_1},\ (10)^{k_2},\ (01)^{k_3},\ (10)^{k_4},\ldots,
		\qquad k_n=n^n.
	\]
	With respect to the usual shift action, this point is generic for the
	uniform measure on the period-two orbit
	\[
		\{(01)^\infty,(10)^\infty\}.
	\]
	Indeed, away from the sparse block boundaries, the orbit spends asymptotically
	equal time in the two phases.

	However, this point is not generic for the restricted action of
	$2\mathbb Z$. Along the even iterates, the phase seen inside a block
	$(01)^{k_n}$ differs from the phase seen inside a block $(10)^{k_{n+1}}$.
	Since the block lengths grow very fast, the empirical measures along
	$2\mathbb Z$ have different accumulation points.

	Now consider the jigsaw of level $2$ which sends the block $01$ to $11$ and
	acts as the identity on the other two-letter blocks. Under this map, the
	blocks $(01)^{k_n}$ are changed into $(11)^{k_n}$, while the blocks
	$(10)^{k_{n+1}}$ remain unchanged. Hence, in the image point, the frequency
	of the symbol $0$ oscillates between values close to $0$ and values close to
	$1/2$. In particular, the image point is not generic.

	This shows that the additional genericity requirements included in universal
	goodness are essential.
\end{rem}

\subsection{The main construction}

\begin{figure}[t]
    \centering
    
    \begin{tikzpicture}[>=Latex,scale=0.9]
        \def\W{1.125}
        \def\H{1.0}
        
        \foreach \c in {0,...,7}{
            \foreach \r in {0,...,4}{
                \draw[very thin,draw=black!35] ({\c*\W},{\r*\H}) rectangle ({\c*\W+\W},{\r*\H+\H});
            }
        }
        
        \fill[pattern=north east lines,pattern color=blue!35] ({2*\W},{1*\H}) rectangle ({6*\W},{4*\H});
        \draw[line width=2pt,blue!60] ({2*\W},{1*\H}) rectangle ({6*\W},{4*\H});
        \node at ({4*\W},{2.5*\H}) {$F_{N_{k+1}}$};
        
        \foreach \c/\r in {0/0,0/2,0/4,1/1,1/3, 6/0,6/2,6/4,7/1,7/3, 2/0,4/0, 3/4,5/4}{
            \fill[green!18] ({\c*\W},{\r*\H}) rectangle ({\c*\W+\W},{\r*\H+\H});
            \draw[green!60]     ({\c*\W},{\r*\H}) rectangle ({\c*\W+\W},{\r*\H+\H});
        }
        
        \foreach \c/\r in {0/1,0/3,1/0,1/2,1/4, 6/1,6/3,7/0,7/2,7/4, 3/0,5/0, 2/4,4/4}{
            \fill[pattern=dots,pattern color=red!55] ({\c*\W},{\r*\H}) rectangle ({\c*\W+\W},{\r*\H+\H});
            \draw[red!60]                           ({\c*\W},{\r*\H}) rectangle ({\c*\W+\W},{\r*\H+\H});
        }
        
        \draw[line width=1pt] (0,0) rectangle (9,5);
        \node[anchor=west] at (0,5.35) {$z^{(k)}$};
        
        \node[align=center,scale=0.85] at (4.5,-0.45) {%
            \tikz{\draw[line width=2pt,blue!60] (0,0) rectangle (0.5,0.22);\fill[pattern=north east lines,pattern color=blue!35] (0,0) rectangle (0.5,0.22);}~$F_{N_{k+1}}$
            \quad\quad
            \tikz{\draw[green!60] (0,0) rectangle (0.5,0.22);\fill[green!18] (0,0) rectangle (0.5,0.22);}~good copy
            \quad\quad
            \tikz{\draw[red!60] (0,0) rectangle (0.5,0.22);\fill[pattern=dots,pattern color=red!55] (0,0) rectangle (0.5,0.22);}~bad copy
        };
    \end{tikzpicture}
    
    \vspace{1.0ex}
    
    \begin{tikzpicture}[>=Latex,scale=0.9]
        \def\W{1.125}
        \def\H{1.0}
        
        \foreach \c in {0,...,7}{
            \foreach \r in {0,...,4}{
                \draw[very thin,draw=black!35] ({\c*\W},{\r*\H}) rectangle ({\c*\W+\W},{\r*\H+\H});
            }
        }
        
        \fill[pattern=north east lines,pattern color=blue!35] ({2*\W},{1*\H}) rectangle ({6*\W},{4*\H});
        \draw[line width=2pt,blue!60] ({2*\W},{1*\H}) rectangle ({6*\W},{4*\H});
        \node at ({4*\W},{2.5*\H}) {$F_{N_{k+1}}$};
        
        \foreach \c/\r in {0/0,0/2,0/4,1/1,1/3, 6/0,6/2,6/4,7/1,7/3, 2/0,4/0, 3/4,5/4}{
            \fill[green!18] ({\c*\W},{\r*\H}) rectangle ({\c*\W+\W},{\r*\H+\H});
            \draw[green!60]     ({\c*\W},{\r*\H}) rectangle ({\c*\W+\W},{\r*\H+\H});
        }
        
        \foreach \c/\r in {0/1,0/3,1/0,1/2,1/4, 6/1,6/3,7/0,7/2,7/4, 3/0,5/0, 2/4,4/4}{
            \fill[pattern=horizontal lines,pattern color=red!70] ({\c*\W},{\r*\H}) rectangle ({\c*\W+\W},{\r*\H+\H});
            \draw[red!60]                                  ({\c*\W},{\r*\H}) rectangle ({\c*\W+\W},{\r*\H+\H});
        }
        
        \draw[line width=1pt] (0,0) rectangle (9,5);
        \node[anchor=west] at (0,5.35) {$z^{(k+1)}$};
        
        \node[align=center,scale=0.85] at (4.5,-0.45) {%
            \tikz{\draw[line width=2pt,blue!60] (0,0) rectangle (0.5,0.22);\fill[pattern=north east lines,pattern color=blue!35] (0,0) rectangle (0.5,0.22);}~$F_{N_{k+1}}$
            \quad\quad
            \tikz{\draw[green!60] (0,0) rectangle (0.5,0.22);\fill[green!18] (0,0) rectangle (0.5,0.22);}~good copy
            \quad\quad
            \tikz{\draw[red!60] (0,0) rectangle (0.5,0.22);\fill[pattern=horizontal lines,pattern color=red!70] (0,0) rectangle (0.5,0.22);}~bad copy $\to$ replaced
        };
    \end{tikzpicture}
    
\caption{
One step of the block-replacement construction. The finite region
$W_{N_{k+1}}$ is frozen; outside it, good block types are preserved and all
other block types are replaced by a fixed reference block.}
\label{fig:block-replacement}
\end{figure}

We now perform the main block-replacement construction. The only technical
point is that the decision whether a block is good should depend only on the
block itself, not on its position. For this reason we first separate the
interior of a block from its boundary.

Fix once and for all an increasing sequence $(Q_N)_{N\in\N}$ of finite subsets
of $G$ such that
\[
	\bigcup_{N\in\N} Q_N=G.
\]
For $N\in\N$, put
\[
	W_N=Q_N\cup\bigcup_{r\le N}F_r .
\]
Thus $(W_N)_{N\in\N}$ is an increasing sequence of finite subsets of $G$ whose
union is $G$.

For $z\in\alf^G$, $L\in\N$, and $h\in H_L$, write
\[
	\operatorname{blk}_{L,h}(z)=(z_{ah})_{a\in F_L}\in\alf^{F_L}
\]
for the block of $z$ over the principal copy $F_Lh$.

For a finite set $D\subset G$ and a principal copy $S=F_Lh$, define the
$D$-interior of $S$ by
\[
	\operatorname{Int}_D(S)=\{g\in S: Dg\subset S\}.
\]
If $w\in\alf^*$, write $D_w=\operatorname{dom}(w)$. For a configuration
$c\in\alf^{F_L}$, define the interior frequency of $w$ in $c$ by
\[
	q_{L,w}(c)
	=
	\frac{1}{|F_L|}
	\bigl|
	\{a\in F_L:
	D_w a\subset F_L
	\text{ and } c_{da}=w_d \text{ for all } d\in D_w\}
	\bigr|.
\]
Thus $q_{L,w}(c)$ depends only on the configuration $c$ on $F_L$. Moreover, if
$S=F_Lh$ and $c=\operatorname{blk}_{L,h}(z)$, then
\begin{equation}\label{eq:interior-frequency-bound}
	\bigl|
	\m(z,S)([w])-q_{L,w}(c)
	\bigr|
	\le
	\frac{|S\setminus\operatorname{Int}_{D_w}(S)|}{|S|}.
\end{equation}

We shall use the following two elementary facts.

\begin{lem}\label{lem:finite-modification}
	Let $u,v\in\alf^G$ differ on only finitely many coordinates. Then $u$ and
	$v$ have the same empirical limits along every F{\o}lner sequence. The same
	is true after applying any finite composition of jigsaws. In particular, if
	$u$ is universally good and generic for an ergodic measure, then so is $v$.
\end{lem}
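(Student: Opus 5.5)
The plan is to reduce every claim to the elementary observation that a finite set of coordinates is invisible to averages along a F{\o}lner sequence. Let $E=\{g\in G: u_g\neq v_g\}$, which is finite, and fix any F{\o}lner sequence $(F_n)$ and any cylinder $[w]$ with $w\in\alf^*$, with domain $D_w$. The indicator $\mathbf 1_{[w]}(fu)$ depends only on the coordinates $u_{df}$, $d\in D_w$. Hence $\mathbf 1_{[w]}(fu)\neq\mathbf 1_{[w]}(fv)$ is possible only if $f\in D_w^{-1}E$, which is a finite set. Therefore
\[
\bigl|\m(u,F_n)([w])-\m(v,F_n)([w])\bigr|\le \frac{|D_w^{-1}E|}{|F_n|}\longrightarrow 0,
\]
because $|F_n|\to\infty$ along any F{\o}lner sequence in the infinite group $G$. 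Since cylinders are clopen and generate the topology, $\di(\m(u,F_n),\m(v,F_n))\to0$, and so $\hat\omega_{\F}(u)=\hat\omega_{\F}(v)$. The same computation works verbatim for the F{\o}lner sequences $\F^{H_n}=(F_k\cap H_n)_k$ in $H_n$, which also have cardinalities tending to infinity. (Equivalently, $D_{B,\F}(u,v)=0$, and Theorem~\ref{HD} applies.)

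For the jigsaw statement, I will show that if $\Phi=\Psi_r\circ\cdots\circ\Psi_1$ is a finite composition of jigsaws, then $\Phi(u)$ and $\Phi(v)$ again differ only on finitely many coordinates. It suffices to treat one jigsaw $\Psi$ of level $m$. By Definition~\ref{piece}, $\Psi(u)$ and $\Psi(v)$ agree on any principal copy $F_mh$ on which $u$ and $v$ agree. The finite set $E$ meets only finitely many principal copies, since these partition $G$. So $\Psi(u)$ and $\Psi(v)$ can differ only on the finite union of those copies. Induction on $r$ then gives finiteness for $\Phi$, and the first paragraph yields equality of all empirical limit sets of $\Phi(u)$ and $\Phi(v)$, along $\F$ and along every $\F^{H_n}$.

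Finally I will transfer the properties. If $u$ is $\F$-generic for an ergodic $\mu$, then $\hat\omega_\F(v)=\hat\omega_\F(u)=\{\mu\}$, which gives (U1). For (U2), $\hat\omega_{\F^{H_n}}(v)=\hat\omega_{\F^{H_n}}(u)=\{\mu_{n,s}\}$ with the same component. For (U3), $\hat\omega_{\F^{H_m}}(\Phi(v))=\hat\omega_{\F^{H_m}}(\Phi(u))=\{\Phi_*\mu_{m,s}\}$. One point needs care: universal goodness was formulated relative to a $\mu$-conull set in a subsystem $X$. Here I only use the intrinsic properties (U1)--(U3), which depend solely on the empirical limit sets, so they carry over directly, provided $v$ lies in the ambient subsystem (automatic for the full shift).

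The main, though mild, obstacle is the jigsaw step: one must use that a jigsaw is local on principal copies, so that finite support of the difference is preserved. Everything else is the bound displayed above.
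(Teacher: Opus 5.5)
Your proposal is correct and follows the same route as the paper. The difference set $E$ of $u$ and $v$ is finite, so cylinder frequencies can differ only at the finitely many shifts in $D_w^{-1}E$, which have zero density along any F{\o}lner sequence, including the sequences $\F^{H_n}$; a jigsaw of fixed level changes only the finitely many principal copies meeting $E$, so the difference stays finite under finite compositions. Your explicit check of (U1)--(U3) just spells out what the paper dismisses as following ``directly from its definition''.
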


\begin{proof}
	Let
	\[
		P=\{g\in G:u_g\neq v_g\}.
	\]
	Then $P$ is finite. Fix a cylinder $[w]$ and write
	$D_w=\operatorname{dom}(w)$. The values
	$\mathbf 1_{[w]}(gu)$ and $\mathbf 1_{[w]}(gv)$ may differ only when
	\[
		D_wg\cap P\neq\emptyset .
	\]
	The set of such $g$ is finite. Hence it has density zero along any
	F{\o}lner sequence. Therefore $u$ and $v$ have the same empirical limits.

	A jigsaw of fixed level maps configurations which differ on finitely many
	coordinates to configurations which again differ on finitely many
	coordinates, since only finitely many principal blocks are affected. The
	same holds for any finite composition of jigsaws. The claim about universal
	goodness follows directly from its definition.
\end{proof}

\begin{lem}\label{lem:finite-exception-ue}
	Let $z\in\alf^G$. Suppose that for every $w\in\alf^*$ and every
	$\delta>0$ there exist $K\in\N$ and a finite set $E\subset H_K$ such that
	the numbers
	\[
		\bigl\{\m(z,F_Kg)([w]):g\in H_K\setminus E\bigr\}
	\]
	are pairwise $\delta$-close. Then $z$ is uniquely ergodic.
\end{lem}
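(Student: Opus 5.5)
We reduce to Lemma~\ref{lemma:2} by adapting the argument of Lemma~\ref{lemma:1} so that it tolerates finitely many exceptional principal copies. Fix $w\in\alf^*$ and $\delta>0$. Choose $K$ and the finite set $E\subset H_K$ so that the numbers $\m(z,F_Kg)([w])$, $g\in H_K\setminus E$, are pairwise $\delta$-close. Put $B=\bigcup_{\gamma\in E}F_K\gamma$; this is a finite subset of $G$.

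Fix $\eta>0$ and let $n\ge N$, where $N$ is chosen as in the proof of Lemma~\ref{lemma:1}. Take $g\in G$ and set $E'=F_ng$. As there, cover $E'$ by the union $U$ of the principal copies $F_K\gamma$ meeting it, so that $|U\setminus E'|\le\eta|E'|$. Split $U=U_{\rm good}\sqcup U_{\rm bad}$, where $U_{\rm bad}$ consists of the tiles with $\gamma\in E$. Then $|U_{\rm bad}|\le|B|$. For all $n$ large enough we have $|B|\le\eta|F_n|=\eta|E'|$, because $|F_n|\to\infty$. This is the one point where the argument departs from Lemma~\ref{lemma:1}, and it works uniformly in $g$ only because $|B|$ is a fixed finite number. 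Hence
\[
	\bigl|\m(z,E')([w])-\m(z,U_{\rm good})([w])\bigr|\le 4\eta ,
\]
roughly, since removing a set of relative size at most $\rho$ moves a frequency by at most $\rho$ (up to a harmless factor of $2$). The average $\m(z,U_{\rm good})([w])$ is a convex combination of principal-copy frequencies with $\gamma\notin E$. It is therefore $\delta$-close to $\m(z,F_K\gamma_0)([w])$ for any fixed $\gamma_0\in H_K\setminus E$; such a $\gamma_0$ exists because $H_K$ is infinite, $G$ being infinite.

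The triangle inequality through $\gamma_0$ now shows that all numbers $\m(z,F_ng)([w])$ with $g\in G$ and $n\ge N'$ are pairwise $2(\delta+4\eta)$-close. Since $\delta$ and $\eta$ are arbitrary, the hypothesis of Lemma~\ref{lemma:2} holds, and $z$ is uniquely ergodic.

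The one point needing care is that Lemma~\ref{lemma:2} is stated for a tempered F{\o}lner sequence, and this requirement carries over just as it did in Corollary~\ref{cor:ue}, using the tempered choice from the Remark after Theorem~\ref{Cortez}. The main obstacle, uniformity in $g$ when the exceptional tiles lie inside $F_ng$, is handled by the cardinality bound $|B|/|F_n|\to0$ above.
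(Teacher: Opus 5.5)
Your proposal is correct and is essentially the paper's own argument: you rerun the tiling estimate of Lemma~\ref{lemma:1} while discarding the at most $|E|$ exceptional principal copies, whose total size $|E||F_K|$ is negligible against $|F_n|$ uniformly in the translate $g$, and then feed the resulting uniform closeness into Lemma~\ref{lemma:2}. The only difference is in the constants ($2(\delta+4\eta)$ versus the paper's $2\eta+\gamma$), which is purely cosmetic.
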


\begin{proof}
	Fix $w\in\alf^*$ and $\eta>0$. Choose $K\in\N$ and a finite set
	$E\subset H_K$ such that the numbers
	\[
		\bigl\{\m(z,F_Kg)([w]):g\in H_K\setminus E\bigr\}
	\]
	are pairwise $\eta$-close.

	We claim that the conclusion of Lemma~\ref{lemma:1} still holds with this
	finite exceptional set. The proof is the same as the proof of
	Lemma~\ref{lemma:1}, except that one ignores the finitely many principal
	copies
	\[
		F_Kg,\qquad g\in E.
	\]
	Indeed, when a large right translate $F_ng$ is approximated by a union of
	principal copies of $F_K$, at most $|E|$ of these copies are exceptional.
	Their total contribution is bounded by
	\[
		\frac{|E||F_K|}{|F_n|},
	\]
	which tends to $0$ as $n\to\infty$, uniformly in the translate. The usual
	boundary contribution is controlled by the F{\o}lner property, exactly as
	in Lemma~\ref{lemma:1}.

	Consequently, for every $\gamma>0$ there exists $N\in\N$ such that the
	numbers
	\[
		\bigl\{\m(z,F_ng)([w]):g\in G,\ n\ge N\bigr\}
	\]
	are pairwise $(2\eta+\gamma)$-close. Since $\eta>0$ and $\gamma>0$ were
	arbitrary, the hypothesis of Lemma~\ref{lemma:2} is satisfied. Therefore
	$z$ is uniquely ergodic.
\end{proof}

Theorem~\ref{main} is the main result of this section.

\begin{thm}\label{main}
	Let $\F=(F_n)_{n\in\N}$ be a tempered Cortez--Petite F{\o}lner sequence
	satisfying
	\[
		\frac{|F_n|}{|F_{n+1}|}\longrightarrow 0
		\qquad\text{as } n\to\infty.
	\]
	Let $x\in\alf^G$ be a universally good point which is generic for an
	ergodic measure with respect to $\F$. 
    Then $x$ is a $\dbar_{\F}$-limit of a sequence of points whose orbit closures
are uniquely ergodic. Equivalently, each point in the approximating sequence is
generic, with respect to $\F$, for the unique invariant measure on its orbit
closure.
\end{thm}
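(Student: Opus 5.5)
Let $\mu$ be the ergodic measure generated by $x$. We will build, for each prescribed accuracy, a point $z$ that is $\dbar_{\F}$-close to $x$ and satisfies the hypothesis of Lemma~\ref{lem:finite-exception-ue}. This is done by an iterated block replacement along a sparse sequence of levels $N_1<N_2<\cdots$. Fix an enumeration $w_1,w_2,\dots$ of $\alf^*$ and a summable sequence $\eps_k\downarrow 0$ with $\sum_k\eps_k$ small.

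At stage $k$ we have a point $z^{(k)}$, obtained from $x$ by a finite composition of jigsaws together with a finite modification. By Lemma~\ref{nietracimy} and Lemma~\ref{lem:finite-modification}, $z^{(k)}$ is universally good and generic for some ergodic $\nu_k$. We apply Lemma~\ref{ostatnilemat} to $z^{(k)}$ with words $w_1,\dots,w_k$ and tolerance $\eps_k$. This gives a level $N_{k+1}$ with the following property: for every $m\ge N_{k+1}$, all but an $\eps_k$-fraction of the principal copies of $F_{N_{k+1}}$ inside $F_m$ carry frequencies $\eps_k$-close to $\nu_k([w_i])$. We also take $N_{k+1}$ large enough that the boundary term in \eqref{eq:interior-frequency-bound} is below $\eps_k$ for $w_1,\dots,w_k$.

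Call a configuration $c\in\alf^{F_{N_{k+1}}}$ good if $q_{N_{k+1},w_i}(c)$ is $2\eps_k$-close to $\nu_k([w_i])$ for all $i\le k$. This condition depends only on $c$, which is exactly why interior frequencies are used. Fix one good reference block $c_0$, which exists by Lemma~\ref{ostatnilemat}. Define $\psi$ to be the identity on good blocks and to send every other block to $c_0$, and let $z^{(k+1)}$ be the image of $z^{(k)}$ under the induced jigsaw, except that coordinates in $W_{N_{k+1}}$ are frozen. The freezing is a finite modification, so Lemma~\ref{lem:finite-modification} keeps the inductive hypotheses alive.

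We then verify two estimates.

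\emph{Distance.} The fraction of replaced coordinates in $F_m$ is at most about $\eps_k$. Since principal copies of $F_{N_{k+1}}$ tile $F_m$, this gives $\dbar_{\F}(z^{(k)},z^{(k+1)})\le\eps_k$. For $F_m$ with $m<N_{k+1}$, the freezing of $W_{N_{k+1}}\supset F_m$ means nothing is changed there.

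\emph{Uniformity.} After the replacement, every principal copy of $F_{N_{k+1}}$ outside a finite set has all frequencies of $w_1,\dots,w_k$ close to $\nu_k([w_i])$. The frozen region meets only finitely many copies, and the later changes are also controlled. Each later jigsaw has level $N_j\ge N_{k+1}$ and maps principal copies of $F_{N_j}$ (which are unions of copies of $F_{N_{k+1}}$) to good or reference blocks. We need to show that these later changes perturb the level-$N_{k+1}$ frequencies by only $O(\sum_{j>k}\eps_j)$ for all but finitely many copies.

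The limit $z=\lim z^{(k)}$ exists coordinatewise, because each coordinate is eventually frozen. Summing the distance estimates gives $\dbar_{\F}(x,z)\le\sum\eps_k$. For each word and each $\delta$, the principal copies of some $F_K$, away from a finite set, have pairwise $\delta$-close frequencies, so Lemma~\ref{lem:finite-exception-ue} gives unique ergodicity.

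The main obstacle is the uniformity estimate for the limit point. The later jigsaws replace whole blocks of size $F_{N_j}$, and a replaced reference block could internally contain level-$N_{k+1}$ subcopies with bad frequencies. The first attempt will be to choose each reference block $c_0$ at stage $j$ to itself be a concatenation of level-$N_{k+1}$ blocks that are mostly good at all earlier levels. Lemma~\ref{ostatnilemat} applied to $z^{(j)}$ gives such blocks inside $F_m$, and goodness propagates because $z^{(j)}$ already has good earlier-level structure. Then the frequency estimate over a copy of $F_{N_K}$ splits into a main part, controlled via $\eps_K$, and an error part, controlled by $\sum_{j\ge K}\eps_j$, using disjointness of the principal tilings.

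The second delicate point is the $\dbar$ bound, which needs the proportion of bad copies to be small inside every $F_m$ with $m\ge N_{k+1}$, not only in the limit. This is exactly what Lemma~\ref{ostatnilemat} provides. The hypothesis $|F_n|/|F_{n+1}|\to0$ will be used to absorb the boundary effects of the frozen sets $W_N$ and the partial copies when comparing consecutive levels.
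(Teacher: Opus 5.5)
\textbf{There is a genuine gap: the step you flag as the main obstacle is left open, and the fix you suggest would not close it.} The rest of your scaffolding matches the paper's: interior frequencies so that goodness depends only on the block, a reference block, freezing $W_N$ so that coordinates stabilize, and a change bound uniform in $m$ so that it survives the limit.

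Lemma~\ref{lem:finite-exception-ue} needs, at some level, \emph{all but finitely many} principal copies to carry pairwise close frequencies. Every later jigsaw acts at a strictly higher level. So, freezing aside, a principal copy of $F_{N_K}$ is either left alone or overwritten as a whole by the corresponding sub-block of some later reference block $c_0^{(j)}$.
\begin{itemize}
\item If $c_0^{(j)}$ is only ``mostly good'' at level $N_K$, its bad sub-blocks are copied to every replaced level-$N_j$ position, which is typically a set of positive density. The exceptional set at level $N_K$ is then infinite, and on those copies the frequencies are not controlled at all.
\item The quantity $\sum_{j\ge K}\eps_j$ only bounds the density of changed coordinates along $\F$, which is a $\dbar$-type statement. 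It says nothing about individual copies or arbitrary translates, and that is what unique ergodicity requires.
\item A second, related problem: you freeze the \emph{coordinates} of $W_{N_{k+1}}$ while the jigsaw acts at the same level $N_{k+1}$. The boundary of $W_{N_{k+1}}$ can then cut replaced blocks and create mixed blocks at every earlier level at every stage. Even with exactly good reference blocks, the exceptional set at a fixed level could accumulate to an infinite set.
\end{itemize}

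\textbf{The missing idea is to make goodness an \emph{exact} hereditary invariant with fixed finite exceptional sets.} Require that for each $i\le k$ there is a finite $E_i\subset H_{L_i}$ with $\operatorname{blk}_{L_i,h}(z^{(k)})\in\mathcal G_i$ for all $h\in H_{L_i}\setminus E_i$. The paper preserves this in two steps.
\begin{itemize}
\item It freezes whole principal copies $F_Lh$ of the current jigsaw level that meet $W_N$, with $N>L$, and records them as $E_{k+1}$. Hence no earlier-level copy is ever split.
\item It takes the reference block $c_*$ from a good copy $R=F_Lh_*$ disjoint from $W_{N_k}\cup\bigcup_{i\le k}\bigcup_{h\in E_i}F_{L_i}h$. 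Since $H_L\subset H_{L_i}$, each principal copy of $F_{L_i}$ inside a replaced copy $F_Lt$ corresponds to a non-exceptional principal copy of $F_{L_i}$ inside $R$. Its new block therefore lies in $\mathcal G_i$ exactly, not approximately.
\end{itemize}
In the limit $z$, every $h\in H_{L_i}\setminus E_i$ then gives $\m(z,F_{L_i}h)([w_j])$ within $3\rho_i$ of $\mu_{i-1}([w_j])$ for $j<i$. This is precisely the hypothesis of Lemma~\ref{lem:finite-exception-ue}.

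Incidentally, the hypothesis $|F_n|/|F_{n+1}|\to0$ is not needed for the boundary effects you mention. For $m\ge L$, $F_m$ is exactly tiled by principal copies of $F_L$, so there are no partial copies. The frozen region is handled by $F_n\subset W_N$ for $n\le N$.
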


\begin{proof}
	It is enough to prove that for every $\eps>0$ there exists a point
	$z\in\alf^G$ such that
	\[
		\dbar_{\F}(x,z)\le \eps
	\]
	and the orbit closure of $z$ is uniquely ergodic. Applying this with
	$\eps=1/r$, $r\in\N$, gives the desired approximating sequence.

	Fix $\eps>0$, and enumerate the words in $\alf^*$ as
	\[
		w_1,w_2,\ldots .
	\]
	We construct inductively points $z^{(k)}\in\alf^G$, integers
	$L_k,N_k\in\N$, finite exceptional sets
	\[
		E_k\subset H_{L_k},
	\]
	and collections of good block types
	\[
		\mathcal G_k\subset\alf^{F_{L_k}}.
	\]
	The construction will satisfy the following conditions.

	\begin{enumerate}
		\item\label{main:generic}
		For every $k$, the point $z^{(k)}$ is universally good and is generic
		for some ergodic measure $\mu_k$ with respect to $\F$.

		\item\label{main:increasing}
		$L_k\to\infty$ and $N_k\to\infty$.

		\item\label{main:stabilization}
		\[
			z^{(k+1)}_{|W_{N_k}}=z^{(k)}_{|W_{N_k}}
			\qquad\text{for every } k.
		\]

		\item\label{main:dbar}
		For every $k$,
		\[
			\sup_{n\in\N}
			\frac{1}{|F_n|}
			\#\{g\in F_n:x_g\neq z^{(k)}_g\}
			\le
			\eps(1-2^{-k}).
		\]

		\item\label{main:good-blocks}
		For every $i\le k$ and every $h\in H_{L_i}\setminus E_i$, one has
		\[
			\operatorname{blk}_{L_i,h}(z^{(k)})\in\mathcal G_i .
		\]
	\end{enumerate}

	We start with
	\[
		z^{(1)}=x,
		\qquad
		L_1=N_1=1,
		\qquad
		E_1=\emptyset,
		\qquad
		\mathcal G_1=\alf^{F_{L_1}}.
	\]
	Then all inductive conditions are immediate.

	Suppose that the construction has been carried out up to stage $k$. Let
	$\mu_k$ be the ergodic measure for which $z^{(k)}$ is generic. Put
	\[
		\alpha_{k+1}=\frac{\eps}{2^{k+3}},
		\qquad
		\rho_{k+1}=\frac{\eps}{20(k+1)},
		\qquad
		\beta_{k+1}=\min\{\alpha_{k+1},\rho_{k+1}\}.
	\]
	Let
	\[
		D_{k+1}=\bigcup_{j=1}^{k+1}D_{w_j}.
	\]

	Choose $L=L_{k+1}>N_k$ so large that
	\begin{equation}\label{eq:main-boundary-small}
		\frac{|F_L\setminus\operatorname{Int}_{D_{k+1}}(F_L)|}{|F_L|}
		<\rho_{k+1},
	\end{equation}
	and such that the conclusion of Lemma~\ref{ostatnilemat}, applied to the
	point $z^{(k)}$ and the words $w_1,\ldots,w_{k+1}$, holds at level $L$
	with tolerance $\beta_{k+1}$. Thus, for all sufficiently large $n$, there
	are at least
	\[
		(1-\alpha_{k+1})\frac{|F_n|}{|F_L|}
	\]
	principal copies $S$ of $F_L$ inside $F_n$ such that
	\begin{equation}\label{eq:main-good-full}
		\bigl|
		\m(z^{(k)},S)([w_j])-\mu_k([w_j])
		\bigr|
		<
		\rho_{k+1}
		\qquad\text{for } j=1,\dots,k+1.
	\end{equation}

	Let
	\[
		B_k
		=
		W_{N_k}
		\cup
		\bigcup_{i\le k}\bigcup_{h\in E_i}F_{L_i}h .
	\]
	This is a finite set. Choose a principal copy
	\[
		R=F_Lh_*
	\]
	which satisfies~\eqref{eq:main-good-full} and is disjoint from $B_k$. Such
	a copy exists because only finitely many principal copies of $F_L$ intersect
	$B_k$, whereas good copies occur with positive asymptotic density.

	Let
	\[
		c_*=\operatorname{blk}_{L,h_*}(z^{(k)})\in\alf^{F_L}
	\]
	be the block carried by this reference copy. Define
	\[
		\mathcal G_{k+1}
		=
		\left\{
		c\in\alf^{F_L}:
		\bigl|
		q_{L,w_j}(c)-\mu_k([w_j])
		\bigr|
		<2\rho_{k+1}
		\text{ for } j=1,\dots,k+1
		\right\}.
	\]
	By~\eqref{eq:interior-frequency-bound},
	\eqref{eq:main-boundary-small}, and~\eqref{eq:main-good-full}, the
	reference block $c_*$ belongs to $\mathcal G_{k+1}$.

	Define a block map
	\[
		\psi_{k+1}\colon\alf^{F_L}\to\alf^{F_L}
	\]
	by
	\[
		\psi_{k+1}(c)
		=
		\begin{cases}
			c, & \text{if } c\in\mathcal G_{k+1},\\
			c_*, & \text{if } c\notin\mathcal G_{k+1}.
		\end{cases}
	\]
	Let $\Psi_{k+1}$ be the corresponding jigsaw of level $L$, and set
	\[
		\widetilde z^{(k+1)}=\Psi_{k+1}(z^{(k)}).
	\]
	By Lemma~\ref{nietracimy}, the point $\widetilde z^{(k+1)}$ is universally
	good and generic for an ergodic measure.

	Choose $N=N_{k+1}>L$ so large that $N>N_k$ and, for every $n\ge N$:
	\begin{enumerate}
		\item the principal copies of $F_L$ which intersect $F_n$ but are not
		contained in $F_n$ cover at most $\alpha_{k+1}|F_n|$ points;
		\item the principal copies of $F_L$ contained in $F_n$ whose block type is
not in $\mathcal G_{k+1}$ cover at most $\alpha_{k+1}|F_n|$ points.
	\end{enumerate}
	The first condition follows from the F{\o}lner property, and the second
	from~\eqref{eq:main-good-full} together with
	\eqref{eq:interior-frequency-bound}.

	Let
	\[
		E_{k+1}
		=
		\{h\in H_L:F_Lh\cap W_N\neq\emptyset\}.
	\]
	This is finite. Define $z^{(k+1)}$ blockwise on principal copies
	$F_Lh$, $h\in H_L$, by
	\[
		z^{(k+1)}_{|F_Lh}
		=
		\begin{cases}
			z^{(k)}_{|F_Lh}, & \text{if } h\in E_{k+1},\\
			\widetilde z^{(k+1)}_{|F_Lh}, & \text{if } h\notin E_{k+1}.
		\end{cases}
	\]
	Then
	\[
		z^{(k+1)}_{|W_N}=z^{(k)}_{|W_N}.
	\]
	Moreover, $z^{(k+1)}$ differs from $\widetilde z^{(k+1)}$ only on finitely
	many coordinates. Hence, by Lemma~\ref{lem:finite-modification},
	$z^{(k+1)}$ is universally good and generic for an ergodic measure. This
	proves~\ref{main:generic} at stage $k+1$.

	Conditions~\ref{main:increasing} and~\ref{main:stabilization} are immediate
	from the choice of $L$ and $N$. We now prove~\ref{main:dbar}. If $n\le N$,
	then $F_n\subset W_N$, and no point of $F_n$ is changed. If $n>N$, changes
	inside $F_n$ may occur only in principal copies of $F_L$ whose block type is
	not in $\mathcal G_{k+1}$, together with boundary copies which intersect
	$F_n$ without being contained in $F_n$. By the choice of $N$, these two
	parts have total cardinality at most
	\[
		2\alpha_{k+1}|F_n|
		\le
		\frac{\eps}{2^{k+1}}|F_n|.
	\]
	Therefore
	\[
		\sup_{n\in\N}
		\frac{1}{|F_n|}
		\#\{g\in F_n:z^{(k)}_g\neq z^{(k+1)}_g\}
		\le
		\frac{\eps}{2^{k+1}}.
	\]
	Combining this with the inductive estimate for $z^{(k)}$ gives
	\[
		\sup_{n\in\N}
		\frac{1}{|F_n|}
		\#\{g\in F_n:x_g\neq z^{(k+1)}_g\}
		\le
		\eps(1-2^{-(k+1)}).
	\]

	It remains to verify~\ref{main:good-blocks}. For $i=k+1$, this follows
	directly from the definition of $\psi_{k+1}$ and $E_{k+1}$. Now fix
	$i\le k$ and $h\in H_{L_i}\setminus E_i$. The principal copy $F_{L_i}h$ is
	contained in a unique principal copy $F_Lt$ of $F_L$. If $t\in E_{k+1}$, or
	if $\operatorname{blk}_{L,t}(z^{(k)})\in\mathcal G_{k+1}$, then the block
	over $F_{L_i}h$ is unchanged, and the claim follows from the inductive
	hypothesis.

	Otherwise, the copy $F_Lt$ is replaced by the reference block $c_*$. Hence
	the new block over $F_{L_i}h$ coincides with a block of $z^{(k)}$ over some
	principal copy of $F_{L_i}$ contained in $R$. Since $R\cap B_k=\emptyset$,
	this principal copy is not one of the exceptional copies indexed by $E_i$.
	Therefore, by the inductive hypothesis, that block belongs to
	$\mathcal G_i$. This proves~\ref{main:good-blocks} at stage $k+1$ and
	completes the induction.

	By~\ref{main:increasing} and~\ref{main:stabilization}, the sequence
	$(z^{(k)})_{k\in\N}$ converges in the product topology. Indeed, every
	finite subset of $G$ is eventually contained in some $W_{N_k}$. Let
	\[
		z=\lim_{k\to\infty}z^{(k)}.
	\]

	We now show that $\dbar_{\F}(x,z)\le\eps$. Fix $n\in\N$. Since
	$F_n$ is finite and the sets $W_{N_k}$ exhaust $G$, there exists
	$K\in\N$ such that
	\[
		F_n\subset W_{N_K}.
	\]
	By the stabilization property, the configuration on $F_n$ is no longer
	changed after stage $K$. Hence
	\[
		z_{|F_n}=z^{(K)}_{|F_n}.
	\]
	Therefore, using~\ref{main:dbar},
	\[
		\frac{1}{|F_n|}
		\#\{g\in F_n:x_g\neq z_g\}
		=
		\frac{1}{|F_n|}
		\#\{g\in F_n:x_g\neq z^{(K)}_g\}
		\le
		\eps(1-2^{-K})
		\le
		\eps.
	\]
	Since $n\in\N$ was arbitrary, we get
	\[
		\dbar_{\F}(x,z)\le\eps.
	\]

	It remains to prove that $z$ is uniquely ergodic. Fix $w_j\in\alf^*$ and
	$\delta>0$. Choose $i>j$ so large that
	\[
		6\rho_i<\delta.
	\]
	Let $h\in H_{L_i}\setminus E_i$. Since $F_{L_i}h$ is finite, there exists
	$K\ge i$ such that
	\[
		F_{L_i}h\subset W_{N_K}.
	\]
	By stabilization,
	\[
		z_{|F_{L_i}h}=z^{(K)}_{|F_{L_i}h}.
	\]
	By~\ref{main:good-blocks},
	\[
		\operatorname{blk}_{L_i,h}(z)\in\mathcal G_i.
	\]
	Hence
	\[
		\bigl|
		q_{L_i,w_j}(\operatorname{blk}_{L_i,h}(z))
		-\mu_{i-1}([w_j])
		\bigr|
		<2\rho_i.
	\]
	Using~\eqref{eq:interior-frequency-bound} and the choice of $L_i$, we get
	\[
		\bigl|
		\m(z,F_{L_i}h)([w_j])-\mu_{i-1}([w_j])
		\bigr|
		<3\rho_i.
	\]
	Therefore the numbers
	\[
		\bigl\{
		\m(z,F_{L_i}h)([w_j]):
		h\in H_{L_i}\setminus E_i
		\bigr\}
	\]
	are pairwise $6\rho_i$-close, hence pairwise $\delta$-close.

	Since $w_j$ and $\delta$ were arbitrary, Lemma~\ref{lem:finite-exception-ue}
	implies that $z$ is uniquely ergodic. Finally, every weak$^*$ accumulation
	point of the empirical measures of $z$ along $\F$ is an invariant measure
	supported on $\overline{O_G(z)}$. Since this orbit closure is uniquely
	ergodic, these empirical measures converge to its unique invariant measure.
	Thus $z$ is generic for the unique invariant measure on its orbit closure.
This completes the proof.
\end{proof}

Using Theorems~\ref{HD} and~\ref{unequ}, together with
Corollary~\ref{cor:entropy-continuity}, we obtain the following corollary of
Theorem~\ref{main}.
\begin{cor}\label{glowny}
	Let $G$ be an infinite countable amenable residually finite group, and let
	$\F=(F_n)_{n\in\N}$ be a tempered Cortez--Petite F{\o}lner sequence in $G$
	such that
	\[
		\frac{|F_n|}{|F_{n+1}|}\longrightarrow 0
		\qquad\text{as } n\to\infty.
	\]
	Then the invariant measures supported on uniquely ergodic subsystems are
	entropy dense in $\M_G^e(\alf^G)$.
\end{cor}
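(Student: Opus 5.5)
We fix $\mu\in\M_G^e(\alf^G)$ and aim to produce uniquely ergodic points $z_r$ whose unique invariant measures $\nu_r$ satisfy $\nu_r\to\mu$ weak$^*$ and $h(\nu_r)\to h(\mu)$.

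\textbf{Step 1: choose a good starting point.} By Lemma~\ref{lem:universal}, applied with $X=\alf^G$ and the Cortez--Petite sequence $(H_n)$ associated with $\F$, the set of universally good points is $\mu$-conull. Among them (U1) gives that such a point $x$ is $\F$-generic for $\mu$. So $x$ satisfies the hypotheses of Theorem~\ref{main}.

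\textbf{Step 2: approximate in $\dbar_\F$.} Theorem~\ref{main} yields points $z_r$ with $\dbar_\F(x,z_r)\to0$. Each $\overline{O_G(z_r)}$ is uniquely ergodic, and $z_r$ is $\F$-generic for the unique invariant measure $\nu_r$ on it. Thus $\nu_r$ is ergodic (being the unique invariant measure of its subsystem, it is extreme) and supported on a uniquely ergodic subsystem. It is therefore an admissible approximant, and $z_r$ is an ergodic point.

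\textbf{Step 3: weak$^*$ convergence.} By Theorem~\ref{unequ}, $\dbar_\F(x,z_r)\to0$ implies $D_{B,\F}(x,z_r)\to0$, with a modulus independent of $\F$. Theorem~\ref{HD} then gives $\HD(\hat\omega_\F(x),\hat\omega_\F(z_r))\to0$. Since $\hat\omega_\F(x)=\{\mu\}$ and $\hat\omega_\F(z_r)=\{\nu_r\}$, this says $\di(\mu,\nu_r)\to0$.

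\textbf{Step 4: entropy convergence.} Both $x$ and $z_r$ are points generating ergodic measures. Corollary~\ref{cor:entropy-continuity} (uniform continuity of entropy with respect to $\dbar$ on such points) then gives $h(\nu_r)\to h(\mu)$.

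The only delicate point is Step 4. Corollary~\ref{cor:entropy-continuity} must be read as continuity with respect to the pointwise $\dbar_\F$ between generic points, rather than the joining distance in Theorem~\ref{Kalikow}. That reduction is the content of \cite[Theorem 3.13]{BLM}, which bounds the joining $\dbar$ of the generated measures by the pointwise one. We also need that the Følner sequence used there can be our $\F$. I would cite it as stated, noting that the moduli are independent of $\F$.
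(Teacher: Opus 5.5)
Your proposal is correct and follows the paper's proof essentially step for step. It uses a universally good $\F$-generic point for $\mu$ from Lemma~\ref{lem:universal}, the $\dbar_\F$-approximation by uniquely ergodic points from Theorem~\ref{main}, weak$^*$ convergence via Theorems~\ref{unequ} and~\ref{HD}, and entropy convergence via Corollary~\ref{cor:entropy-continuity}. Your added observations are also correct: each $\nu_r$ is ergodic, and the joining distance is bounded by the pointwise $\dbar_\F(x,z_r)$, since any weak$^*$ limit point of the empirical measures of the pair $(x,z_r)$ along $\F$ is a joining of $\mu$ and $\nu_r$ that gives the disagreement set at $e$ mass at most $\dbar_\F(x,z_r)$. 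These remarks just make explicit what the paper leaves implicit.
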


\begin{proof}
	Let $\mu\in\M_G^e(\alf^G)$. By Lemma~\ref{lem:universal}, we may choose a
	universally good point $x\in\alf^G$ which is generic for $\mu$ with respect
	to $\F$.
	By Theorem~\ref{main}, for every $r\in\N$ there exists a point
	$x_r\in\alf^G$ such that
	\[
		\dbar_{\F}(x_r,x)<\frac1r
	\]
	and $x_r$ is generic, with respect to $\F$, for a measure $\mu_r$ whose
	orbit closure is uniquely ergodic. In particular,
	\[
		\M_G(\overline{O_G(x_r)})=\{\mu_r\}.
	\]
	By Theorem~\ref{unequ}, the convergence
	\[
		\dbar_{\F}(x_r,x)\longrightarrow 0
	\]
	implies
	\[
		D_{B,\F}(x_r,x)\longrightarrow 0.
	\]
	Hence, by Theorem~\ref{HD},
	\[
		\HD\bigl(\hat\omega_{\F}(x_r),\hat\omega_{\F}(x)\bigr)
		\longrightarrow 0.
	\]
	Since $x$ is $\F$-generic for $\mu$ and $x_r$ is $\F$-generic for $\mu_r$,
	we have
	\[
		\hat\omega_{\F}(x)=\{\mu\},
		\qquad
		\hat\omega_{\F}(x_r)=\{\mu_r\}.
	\]
	Therefore
	\[
		\mu_r\xrightarrow[r\to\infty]{w^*}\mu.
	\]
Moreover, by Corollary~\ref{cor:entropy-continuity}, the convergence
	\[
		\dbar_{\F}(x_r,x)\longrightarrow 0
	\]
	implies
	\[
		h(\mu_r)\longrightarrow h(\mu).
	\]
    Thus $\mu$ is the weak$^*$ and entropy limit of invariant measures
	supported on uniquely ergodic subsystems. Since
	$\mu\in\M_G^e(\alf^G)$ was arbitrary, the claim follows.
\end{proof}

\appendix

\section{Continuity of entropy}\label{appendix}
We assume $|\alf|\ge 2$; the case $|\alf|=1$ is trivial.
We shall use the following elementary facts about static entropy. The proofs
can be found in standard texts on information theory, for instance
\cite{gray2011entropy}.

If $\Pe$ and $\Qe$ are finite measurable partitions, the conditional entropy
of $\Pe$ with respect to $\Qe$ is
\[
	H_\lambda(\Pe\mid\Qe)
	=
	H_\lambda(\Pe\vee\Qe)-H_\lambda(\Qe).
\]
The chain rule for entropy states that
\[
	H_\lambda(\Pe_1\vee\Pe_2\vee\cdots\vee\Pe_n)
	=
	\sum_{i=1}^n
	H_\lambda(\Pe_i\mid \Pe_1\vee\cdots\vee\Pe_{i-1}),
\]
with the convention that the first term is $H_\lambda(\Pe_1)$.

Set
\[
	h_2(t)=-t\log t-(1-t)\log(1-t),
\]
with the convention $h_2(0)=h_2(1)=0$.

\begin{lem}[Fano inequality, Lemma 4.2.1 in \cite{gray2011entropy}]
	Let
	\[
		\Pe=\{P_a:a\in\alf\}
		\qquad\text{and}\qquad
		\Qe=\{Q_a:a\in\alf\}
	\]
	be two finite partitions indexed by the same finite alphabet $\alf$. Define
	the error probability by
	\[
		P_e
		=
		\frac12\sum_{a\in\alf}\lambda(P_a\triangle Q_a).
	\]
	Then
	\[
		H_\lambda(\Pe\mid\Qe)
		\le
		h_2(P_e)+P_e\log(|\alf|-1).
	\]
	The same bound holds for $H_\lambda(\Qe\mid\Pe)$.
\end{lem}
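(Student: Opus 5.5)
We need to prove Fano's inequality: $H_\lambda(\Pe\mid\Qe)\le h_2(P_e)+P_e\log(|\alf|-1)$. Note that $P_e=\frac12\sum_a\lambda(P_a\triangle Q_a)$ equals $\lambda(E)$, where $E=\bigcup_a (Q_a\setminus P_a)$ is the set of points whose $\Pe$-label differs from their $\Qe$-label. Indeed, each point lying in $P_a\cap Q_b$ with $a\neq b$ belongs to exactly two of the symmetric differences, namely $P_a\triangle Q_a$ and $P_b\triangle Q_b$.

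The plan is to introduce the two-set partition $\mathcal E=\{E,E^c\}$ and apply the chain rule twice. First,
\[
H_\lambda(\Pe\mid\Qe)\le H_\lambda(\Pe\vee\mathcal E\mid\Qe)=H_\lambda(\mathcal E\mid\Qe)+H_\lambda(\Pe\mid\Qe\vee\mathcal E).
\]
Both equalities here are consequences of the chain rule applied to $\Pe\vee\Qe\vee\mathcal E$, using the definition $H(\cdot\mid\cdot)=H(\cdot\vee\cdot)-H(\cdot)$. Monotonicity in the first step follows from $H(\Pe\vee\mathcal E\vee\Qe)\ge H(\Pe\vee\Qe)$.

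For the first term, conditioning decreases entropy, so $H_\lambda(\mathcal E\mid\Qe)\le H_\lambda(\mathcal E)=h_2(P_e)$.

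For the second term, we expand it as a sum over atoms $Q_b\cap E$ and $Q_b\cap E^c$ of $\lambda(\text{atom})$ times the entropy of the conditional distribution of $\Pe$ on that atom.
\begin{itemize}
\item On $Q_b\cap E^c$ the $\Pe$-label is forced to be $b$, so these atoms contribute $0$.
\item On $Q_b\cap E$ the $\Pe$-label ranges over $\alf\setminus\{b\}$, so the conditional entropy is at most $\log(|\alf|-1)$.
\end{itemize}
Summing gives $H_\lambda(\Pe\mid\Qe\vee\mathcal E)\le\lambda(E)\log(|\alf|-1)=P_e\log(|\alf|-1)$. Combining this with the bound on the first term yields the inequality.

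The symmetric statement for $H_\lambda(\Qe\mid\Pe)$ follows by exchanging the roles of $\Pe$ and $\Qe$. This is legitimate because $P_e$ is symmetric in the two partitions and $E$ is also characterized symmetrically, as the set where the two labels differ.

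The only delicate point is the identity $P_e=\lambda(E)$ together with the atom-by-atom bound. In particular, the uniform bound $\log(|\alf|-1)$ for a distribution supported on at most $|\alf|-1$ symbols is the standard maximum-entropy estimate. Everything else is routine bookkeeping with the chain rule.
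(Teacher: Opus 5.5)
Your argument is correct and complete. The identity $P_e=\lambda(E)$, the split $H_\lambda(\Pe\mid\Qe)\le H_\lambda(\mathcal E\mid\Qe)+H_\lambda(\Pe\mid\Qe\vee\mathcal E)$ via the error partition $\mathcal E=\{E,E^c\}$, and the atom-by-atom bound $\log(|\alf|-1)$ on $Q_b\cap E$ together form the standard proof of Fano's inequality; the paper gives no proof of its own here and simply quotes Gray's lemma, whose textbook argument is essentially the one you give.
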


For invariant measures $\mu,\nu\in\M_G(\alf^G)$, let $\mathcal J(\mu,\nu)$
denote the set of invariant joinings of $\mu$ and $\nu$. We use the joining
description of the $\dbar$-distance:
\[
	\dbar_{\F}(\mu,\nu)
	=
	\inf_{\lambda\in\mathcal J(\mu,\nu)}
	\lambda\{(x,y)\in\alf^G\times\alf^G:x_e\neq y_e\}.
\]
The infimum is a minimum, since $\mathcal J(\mu,\nu)$ is weak$^*$ compact and
the function
\[
	\lambda\mapsto
	\lambda\{(x,y):x_e\neq y_e\}
\]
is weak$^*$ continuous.

\begin{lem}\label{lem:infmin}
	Let $\mu,\nu\in\M_G^e(\alf^G)$. Then there exists an ergodic joining
	$\lambda\in\mathcal J(\mu,\nu)$ such that
	\[
		\dbar_{\F}(\mu,\nu)
		=
		\lambda\{(x,y)\in\alf^G\times\alf^G:x_e\neq y_e\}.
	\]
\end{lem}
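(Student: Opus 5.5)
The plan is to take an optimal joining and then pass to an ergodic one through the ergodic decomposition of joinings. Using the remark preceding the lemma, choose $\lambda_0\in\mathcal J(\mu,\nu)$ realizing the infimum,
\[
	\lambda_0\{x_e\neq y_e\}=\dbar_{\F}(\mu,\nu).
\]
Apply Theorem~\ref{thm:ergodic-decomposition} to the diagonal $G$-action on $\alf^G\times\alf^G$ with the invariant measure $\lambda_0$. This gives
\[
	\lambda_0=\int\lambda_y\,d\kappa(y),
\]
where $\kappa$ is the factor measure and $\lambda_y$ is $G$-ergodic for $\kappa$-a.e.\ $y$.

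The first step is to check that a.e.\ component is still a joining. Let $\pi_1,\pi_2$ be the coordinate projections. Then $\mu=(\pi_1)_*\lambda_0=\int(\pi_1)_*\lambda_y\,d\kappa(y)$. Each $(\pi_1)_*\lambda_y$ is a $G$-invariant measure, and it is ergodic because it is a factor of the ergodic system $(\lambda_y,G)$. Since $\mu$ is ergodic, it is an extreme point of $\M_G(\alf^G)$. Uniqueness of the ergodic decomposition then forces $(\pi_1)_*\lambda_y=\mu$ for $\kappa$-a.e.\ $y$. One concrete way to see this: for each of countably many cylinders $[w]$, the function $y\mapsto(\pi_1)_*\lambda_y([w])$ has integral $\mu([w])$, and extremality rules out a nontrivial average. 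The same argument gives $(\pi_2)_*\lambda_y=\nu$ a.e. Hence $\lambda_y\in\mathcal J(\mu,\nu)$ and $\lambda_y$ is ergodic for $\kappa$-a.e.\ $y$.

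The second step is to choose a good component. The function $f(y)=\lambda_y\{x_e\neq y_e\}$ satisfies
\[
	\int f\,d\kappa=\lambda_0\{x_e\neq y_e\}=\dbar_{\F}(\mu,\nu).
\]
Every a.e.\ component is a joining, so $f(y)\ge\dbar_{\F}(\mu,\nu)$ a.e. by the definition of the infimum. A function bounded below by its own mean equals the mean a.e., so $f(y)=\dbar_{\F}(\mu,\nu)$ for $\kappa$-a.e.\ $y$. Intersecting the full-measure sets from the two steps, choose $y$ with $\lambda_y$ an ergodic joining and $f(y)=\dbar_{\F}(\mu,\nu)$. This $\lambda=\lambda_y$ is the required joining.

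I expect the main obstacle to be the first step: justifying rigorously that almost every ergodic component of a joining of two ergodic measures is again a joining. I would handle it through extremality of ergodic measures in $\M_G(\alf^G)$ together with the countable cylinder base, as sketched above. No F{\o}lner-sequence issues arise, since the joining formula for $\dbar_{\F}$ does not involve $\F$.
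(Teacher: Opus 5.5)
Your proposal is correct and follows the paper's proof essentially step by step. You take a minimizing joining and decompose it into ergodic components, you use extremality of the ergodic measures $\mu,\nu$ to see that almost every component is again a joining of $\mu$ and $\nu$, and you conclude from $\lambda_\omega(\Delta_e)\ge\dbar_{\F}(\mu,\nu)$ and the averaging identity that almost every component attains the minimum. Your cylinder-by-cylinder justification of the extremality step is just a more explicit version of what the paper states in one line.
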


\begin{proof}
	Let $\lambda\in\mathcal J(\mu,\nu)$ be a joining attaining the minimum.
	Write its ergodic decomposition as
	\[
		\lambda=\int \lambda_\omega\,d\theta(\omega).
	\]
	For $\theta$-almost every $\omega$, the measure $\lambda_\omega$ is
	$G$-invariant and ergodic on $\alf^G\times\alf^G$. Its first and second
	marginals are $G$-invariant measures whose averages are $\mu$ and $\nu$,
	respectively. Since $\mu$ and $\nu$ are ergodic, they are extreme points of
	$\M_G(\alf^G)$. Hence, for $\theta$-almost every $\omega$, the first
	marginal of $\lambda_\omega$ is $\mu$ and the second marginal is $\nu$.
	Thus $\lambda_\omega\in\mathcal J(\mu,\nu)$ for $\theta$-almost every
	$\omega$.

	Put
	\[
		\Delta_e=\{(x,y)\in\alf^G\times\alf^G:x_e\neq y_e\}.
	\]
	Since $\lambda$ attains the minimum,
	\[
		\dbar_{\F}(\mu,\nu)
		=
		\lambda(\Delta_e)
		=
		\int \lambda_\omega(\Delta_e)\,d\theta(\omega).
	\]
	On the other hand, for $\theta$-almost every $\omega$,
	\[
		\lambda_\omega(\Delta_e)\ge \dbar_{\F}(\mu,\nu),
	\]
	because $\lambda_\omega$ is also a joining of $\mu$ and $\nu$. Therefore
	\[
		\lambda_\omega(\Delta_e)=\dbar_{\F}(\mu,\nu)
	\]
	for $\theta$-almost every $\omega$. Choosing such an $\omega$ gives the
	required ergodic joining.
\end{proof}

\kalikow*

\begin{proof}
	Let $\mathcal S$ be the standard partition of $\alf^G$ according to the
	symbol at the identity:
	\[
		\mathcal S=\{[a]_e:a\in\alf\},
		\qquad
		[a]_e=\{x\in\alf^G:x_e=a\}.
	\]
Since $\mathcal S$ is a generating partition for the full shift, we have
	\[
		h(\mu)=h_\mu(\mathcal S,G)
		\qquad\text{and}\qquad
		h(\nu)=h_\nu(\mathcal S,G).
	\]
    
	Put
	\[
		\Delta_e=\{(x,y)\in\alf^G\times\alf^G:x_e\neq y_e\}.
	\]
	By Lemma~\ref{lem:infmin}, choose an ergodic joining
	$\lambda\in\mathcal J(\mu,\nu)$ such that
	\[
		\lambda(\Delta_e)=\dbar_{\F}(\mu,\nu).
	\]

	Let
	\[
		\mathcal P=\{[a]_e\times\alf^G:a\in\alf\}
	\]
	be the partition according to the symbol at $e$ in the first coordinate,
	and let
	\[
		\mathcal Q=\{\alf^G\times[a]_e:a\in\alf\}
	\]
	be the corresponding partition for the second coordinate. Then
	\[
		\frac12\sum_{a\in\alf}
		\lambda\bigl(([a]_e\times\alf^G)
		\triangle
		(\alf^G\times[a]_e)\bigr)
		=
		\lambda(\Delta_e)
		=
		\dbar_{\F}(\mu,\nu).
	\]

	Fix a finite set $F\subset G$. We have
	\[
		H_\lambda(\mathcal P^F)=H_\mu(\mathcal S^F),
		\qquad
		H_\lambda(\mathcal Q^F)=H_\nu(\mathcal S^F).
	\]
	Therefore
	\[
		H_\mu(\mathcal S^F)-H_\nu(\mathcal S^F)
		=
		H_\lambda(\mathcal P^F)-H_\lambda(\mathcal Q^F)
		\le
		H_\lambda(\mathcal P^F\mid\mathcal Q^F).
	\]
	Similarly,
	\[
		H_\nu(\mathcal S^F)-H_\mu(\mathcal S^F)
		\le
		H_\lambda(\mathcal Q^F\mid\mathcal P^F).
	\]

	Enumerate $F=\{f_1,\ldots,f_n\}$. By the chain rule and monotonicity of
	conditional entropy under conditioning, we get
	\[
		H_\lambda(\mathcal P^F\mid\mathcal Q^F)
		\le
		\sum_{i=1}^n
		H_\lambda(\mathcal P^{-f_i}\mid\mathcal Q^{-f_i}).
	\]
	For each $i$, the error probability between the labelled partitions
	$\mathcal P^{-f_i}$ and $\mathcal Q^{-f_i}$ is equal to
	$\lambda(\Delta_e)$, by $G$-invariance of $\lambda$. Hence Fano's
	inequality gives
	\[
		H_\lambda(\mathcal P^{-f_i}\mid\mathcal Q^{-f_i})
		\le
		h_2(\dbar_{\F}(\mu,\nu))
		+
		\dbar_{\F}(\mu,\nu)\log(|\alf|-1).
	\]
	Thus
	\[
		H_\lambda(\mathcal P^F\mid\mathcal Q^F)
		\le
		|F|
		\left(
		h_2(\dbar_{\F}(\mu,\nu))
		+
		\dbar_{\F}(\mu,\nu)\log(|\alf|-1)
		\right).
	\]
	The same argument, with $\mathcal P$ and $\mathcal Q$ interchanged, gives
	the same bound for
	$H_\lambda(\mathcal Q^F\mid\mathcal P^F)$. Consequently,
	\[
		\left|
		\frac{1}{|F|}H_\mu(\mathcal S^F)
		-
		\frac{1}{|F|}H_\nu(\mathcal S^F)
		\right|
		\le
		h_2(\dbar_{\F}(\mu,\nu))
		+
		\dbar_{\F}(\mu,\nu)\log(|\alf|-1).
	\]

	Applying this to $F=F_n$ and passing to the limit, we obtain
	\[
		|h(\mu)-h(\nu)|
		\le
		h_2(\dbar_{\F}(\mu,\nu))
		+
		\dbar_{\F}(\mu,\nu)\log(|\alf|-1).
	\]
	This completes the proof.
\end{proof}

\section*{Acknowledgements}
The authors are grateful to Dominik Kwietniak for many valuable suggestions and remarks. MM was supported by the National Science Centre (NCN), Poland, grant no.~2022/47/B/ST1/02866.

\section*{Use of AI-assisted tools}

AI-assisted tools were used to support language editing, proofreading, and
improving the clarity of exposition. AI-assisted tools were also used in
discussions of the presentation of selected proofs. The authors reviewed and
verified all edits and take full responsibility for the mathematical content and
the final version of the manuscript.
  \bibliographystyle{acm}
\def\bibname{Literature}

\bibliography{bibliography}

\end{document}